%% file: main.tex
\input{_0_vars}

\ifdefined\isarxiv
\documentclass[11pt]{article}
\usepackage[numbers]{natbib}
\else
\documentclass{article}
\usepackage{iclr2026_conference,times}

\fi

\ifdefined\isarxiv

\usepackage{amsmath}
\usepackage{amsthm}
\usepackage{amssymb}
\usepackage{algorithm}
\usepackage{subfig}
\usepackage{algpseudocode}
\usepackage{graphicx}
\usepackage{grffile}
\usepackage{wrapfig,epsfig}
\usepackage{url}
\usepackage{xcolor}
\usepackage{epstopdf}

\usepackage{bbm}
\usepackage{dsfont}

\else 

\usepackage{hyperref}
\usepackage{url}

\fi

\ifdefined\isarxiv

\else

\usepackage{amsmath}
\usepackage{amsthm}
\usepackage{amssymb}
\usepackage{algorithm}
\usepackage{subfig}
\usepackage{algpseudocode}
\usepackage{graphicx}
\usepackage{grffile}
\usepackage{wrapfig,epsfig}
\usepackage{url}
\usepackage{xcolor}
\usepackage{epstopdf}

\usepackage{bbm}
\usepackage{dsfont}

\fi

\usepackage{mathrsfs}
\usepackage{tabularx}
\usepackage{longtable}
\usepackage{tikz}
\usepackage[mode=tex]{standalone}
\usetikzlibrary{arrows,arrows.meta,calc,positioning}
 
\ifdefined\isarxiv

\usepackage{hyperref}  
\hypersetup{colorlinks=true,citecolor=blue,linkcolor=blue} 
\usepackage[margin=1in]{geometry}

\else
\fi
 
\graphicspath{{./figs/}}

\theoremstyle{plain}
\newtheorem{theorem}{Theorem}[section]
\newtheorem{lemma}[theorem]{Lemma}
\newtheorem{definition}[theorem]{Definition}

\newtheorem{corollary}[theorem]{Corollary}

\ifdefined\isarxiv

\else
\renewcommand\cite\citep
\fi

\input{_1_maths}

\begin{document}

\ifdefined\isarxiv

\date{September 6, 2026}
\title{\paperTitle}
\author{\paperAuthor}

\else

\title{\paperTitle}

\newcommand{\fix}{\marginpar{FIX}}
\newcommand{\new}{\marginpar{NEW}}

\maketitle

\fi

\ifdefined\isarxiv
  \maketitle
  \begin{abstract}
    \input{00_abstract}

  \end{abstract}

\else

\begin{abstract}
\input{00_abstract}
\end{abstract}

\fi

\input{_2_body}

\ifdefined\isarxiv
\bibliographystyle{alpha}
\bibliography{ref}
\else
\bibliographystyle{iclr2026_conference}
\bibliography{ref}
\fi

\input{_3_app}

\end{document}

%% file: _0_vars.tex
\def\isarxiv{1}

\def\paperTitle{An Improvement to the Upper Bound for Marton's Covering Conjecture}

\def\paperAuthor{
Zhao Song\thanks{\texttt{magic.linuxkde@gmail.com}.} 
\and
Song Yue
}

%% file: _1_maths.tex
\DeclareMathOperator*{\E}{{\mathbb{E}}}

\newcommand{\dist}{\mathrm{d}}

\newcommand{\F}{\mathbb{F}}
\newcommand{\HH}{\mathrm{H}}
\newcommand{\II}{\mathrm{I}}
\newcommand{\KL}{D_{\mathrm{KL}}}
\newcommand{\cU}{\mathrm{U}}
\newcommand{\tauzero}{\tau_0}

\usepackage{booktabs}

%% file: 00_abstract.tex
Marton's covering conjecture concerns sets $A\subseteq\F_2^n$ with
$|A+A|\leq K|A|$ and asks for a cover by polynomially many translates of
one subspace of size at most $|A|$.
Gowers, Green, Manners, and Tao~\cite{ggmt25} proved the conjecture with
covering exponent $12$, and Liao~\cite{l24} improved it to $9$.
We improve the covering exponent from $9$ to $5.287$.

%% file: _2_body.tex
\input{01_intro}

\input{02_tech}
\input{06_preli}

\input{31_upper}
\input{90_llm_disclaimer}

%% file: 01_intro.tex
\section{Introduction}
\label{sec:intro}

For a finite subset $A$ of an abelian group, the doubling constant
$|A+A|/|A|$ measures its additive structure.
Freiman-type theorems seek to describe sets with small doubling in terms of
algebraic objects; over $\F_2^n$, the natural objects are linear subspaces and
their cosets.

Ruzsa~\cite{r99} recorded a sharp covering formulation in 1999 and attributed it to
Marton.
The conjecture asserts that whenever $|A+A|\leq K|A|$, the set $A$ can be
covered by at most $2K^C$ cosets of a single subspace whose cardinality is at
most $|A|$, for an absolute constant $C$.
The central difficulty is to obtain polynomial dependence on $K$, since the
classical Freiman--Ruzsa argument gives a bound exponential in $K$.

Gowers, Green, Manners, and Tao~\cite{ggmt25} proved the covering statement with exponent $C=12$ using an entropic descent argument.
Liao~\cite{l24} subsequently refined the same framework in 2024, introducing a sharper
admissible-penalty analysis and improving the covering exponent from $12$ to
$9$.
Green and Tao~\cite{gt09} proved the lower bound $C\geq 1.466$.

It is therefore natural to ask: 

\begin{center}
  {\it What is the optimal exponent $C$ in Marton's
covering problem?}  
\end{center}

In this work, we improve the upper bound.
\subsection{Our Result}
\label{sec:our:results}

We state our result as follows.

\begin{theorem}[Main upper bound, informal version of
Theorem~\ref{thm:main:formal}]\label{thm:main}
Suppose $n$ is a positive integer, $A\subseteq \F_2^n$ is nonempty, and
$K\geq1$ satisfies $|A+A|\leq K|A|$.
Then there is a subspace $H\leq \F_2^n$ with $|H|\leq|A|$ such that $A$ is
covered by at most $2K^{5.287}$ translates of $H$.
\end{theorem}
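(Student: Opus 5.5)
We follow the entropic route of Gowers--Green--Manners--Tao~\cite{ggmt25} as refined by Liao~\cite{l24}. The work is split into an entropic statement and a combinatorial conversion, and the exponent is tracked through both.

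\emph{Entropic statement.} The target is an entropic polynomial Freiman--Ruzsa bound. For $\F_2^n$-valued random variables $X_1,X_2$, we want a subgroup $H$ with
\[
\dist(X_1,\cU_H)+\dist(X_2,\cU_H)\le c\,\dist(X_1,X_2),
\]
where $\dist$ is the entropic Ruzsa distance and $\cU_H$ is uniform on $H$. The constant $c$ should be as small as possible, and we also need control of $|\HH(\cU_H)-\HH(X_i)|$.

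\emph{Combinatorial conversion.} We apply the entropic statement with $X_1=X_2=\cU_A$. Since $\dist(\cU_A,\cU_A)\le\log K$, this gives $\dist(\cU_A,\cU_H)\le \tfrac{c}{2}\log K$. Standard Ruzsa-distance/entropy inequalities then give a large-intersection estimate: some translate $x+H$ satisfies $|A\cap(x+H)|\ge K^{-\alpha}|A|$, with $\alpha$ linear in $c$. We then pass to a subgroup of size at most $|A|$, either by taking a subgroup of $H$ or by adjusting the entropy comparison. Finally, Ruzsa's covering lemma covers $A$ by at most about $K^{\beta}$ translates, where $\beta$ is an affine function of $c$ together with the entropy-gap terms. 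Liao's exponent $9$ comes from this chain, so the conversion is bookkept exactly as in~\cite{l24}, and we look for a sharper conversion. One candidate is to optimize over which intermediate random variable feeds the covering lemma, instead of always using $\cU_A$.

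\emph{Entropic descent.} To prove the entropic statement, we minimize the functional
\[
\tau[X_1;X_2]=\dist(X_1,X_2)+\eta\,\dist(X_1^0,X_1)+\eta\,\dist(X_2^0,X_2)
\]
over all pairs $(X_1,X_2)$, with reference variables $X_i^0$. We then show that at a minimizer $\dist(X_1,X_2)=0$. This forces $X_1,X_2$ to be translates of uniform distributions on a common subgroup, and the conclusion follows with a constant $c$ depending on $\eta$. The descent step works as follows. Take independent copies $X_1,X_2,\tilde X_1,\tilde X_2$ and form sums such as $U=X_1+X_2$, $V=\tilde X_1+X_2$, $W=X_1+\tilde X_1$, which satisfy $U+V+W=0$. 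The fibring identity and entropic Balog--Szemerédi--Gowers bounds then show that some conditioned pair $(U|U+V,\,\dots)$ has smaller $\tau$ whenever $\dist(X_1,X_2)>0$ and $\eta$ is below a threshold. The admissible threshold $\eta$ determines $c$, roughly $c\approx 2+2/\eta$ up to conversion terms.

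\emph{Main obstacle.} The key step, and where I expect the most difficulty, is enlarging the admissible $\eta$ beyond Liao's penalty analysis. My first attempt would be to average the descent inequality over all three sums $U,V,W$ and their symmetric conditionings rather than choosing one. I would then solve the resulting linear program over the ``endgame'' inequalities, which include submodularity, the fibring lemma and the sumset/Kaimanovich--Vershik bounds, to obtain the best convex combination. Once an improved threshold $\eta_0$ is certified, the optimized conversion gives the exponent as an explicit function of $\eta_0$. The final step is to check numerically that it is at most $5.287$, absorbing the factor $2$ in the constant $2K^{5.287}$.
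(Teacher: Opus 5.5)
There is a genuine gap. The step that actually produces an exponent below Liao's $9$ is not in your proposal. Your descent with potential $\dist+\eta\cdot(\text{penalty})$ amounts to an inverse bound $M_\tau\leq\tau(X)+\tau(Y)+\eta^{-1}\dist[X;Y]$. Even with Liao's KL conversion, reaching $5.287$ requires $\eta^{-1}<4.287$, i.e.\ $\eta>0.233$, almost twice Liao's threshold $1/8$.

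Your plan for this step is to ``average over all three sums and solve the linear program'' over the GGMT/Liao endgame inequalities. That averaging is exactly Liao's argument: he sums the penalty estimates over the three conditioned endgame candidates and closes with Eq.~\eqref{eq:tech:liao-threshold}, which works only for $\eta<1/8$. You do not name any inequality beyond those Liao already used. The paper's improvement needs several new ones:
\begin{itemize}
\item the reverse conditioning inequality $\tau(X)\leq\tau(X\mid D)+\frac12\II[X:D]$, i.e.\ two-sided admissibility, checked for Liao's KL penalty in Lemma~\ref{lem:KL:conditioning};
\item chain powers $\tau^{[N]}$, which turn a diagonal bound on all powers into the ensemble bound of Lemma~\ref{lem:inverse:ensemble}, valid for whole posterior ensembles and for weighted unions with their cross terms;
\item $84$ auxiliary cyclic bounds $M_\tau-2\tau(A\mid C)\leq\alpha_jx+\beta_js$, improved simultaneously with the diagonal bound;
\item the self-improving implication $\mathsf D(B),\mathsf C_j(1)\Rightarrow\mathsf D(c),\mathsf C_j(\vartheta)$ on every chain power (Lemma~\ref{lem:diagonal:comparison}), certified by exact rational cone identities on $32$ conditional and $36$ original copies, and iterated by scaling from Liao's initial coefficient $8$.
\end{itemize}
Without some such mechanism, your final ``check numerically that it is at most $5.287$'' has nothing to check.

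Second, the conversion you describe cannot reach $5.287$ even if you grant a better $\eta$. With reference-distance penalties, the starting pair already carries penalty $\dist[\cU_A;\cU_A]\neq0$. So the entropic constant is $2+\eta^{-1}$: this is GGMT's $11$, and compare Corollary~\ref{cor:entropic}. Passing from $\dist[\cU_A;\cU_H]$ to a dense coset and then to a cover costs one more power of $K$, giving exponent $3+\eta^{-1}$. That chain yields GGMT's $12$, not Liao's $9$.

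Liao's $9$, and the paper's $5.287$, come instead from the KL penalty $\tau_A$ of Definition~\ref{def:KL:covering}. It satisfies $\tau_A(\cU_A)=0$ and has the exact subgroup value $2\tau_A(\cU_V)=\log(|A||V|/m_A(V)^2)$ (Lemma~\ref{lem:KL:covering}). Only the diagonal bound at $X=\cU_A$ is then needed. The chain $\log(|A||V|/m^2)=M_{\tau_A}\leq c\,\dist[\cU_A;\cU_A]\leq c\log K$ gives $|A\cap(V+t)|\geq K^{-c}\max\{|A|,|V|\}$. Lemma~\ref{lem:slice:cover} then gives a cover by $2K^{1+c}$ translates, with $c<4.287$. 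Your route through a two-law statement for general $X_1,X_2$, with entropy-gap control, proves more than is needed and loses an additive $2$ in the exponent.
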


Gowers, Green, Manners, and Tao~\cite{ggmt25} obtained the covering exponent
$12$.
Liao~\cite[Theorem~2]{l24} improved it to $9$.
We further improve it to
$5.287$.
Green and Tao~\cite{gt09} give the lower bound $C_*\geq1.466$ for
the optimal exponent $C_*$ in Marton's covering problem.

%% file: 02_tech.tex
\section{Technique Overview}
\label{sec:tech}

Section~\ref{sec:tech:ggmt} presents the Gowers--Green--Manners--Tao argument.
Section~\ref{sec:tech:liao} explains Liao's refinement.
Section~\ref{sec:tech:upper} outlines the proof of our upper bound.

\subsection{Summary of GGMT25}
\label{sec:tech:ggmt}

\paragraph{The shared descent setup.}
For $G$-valued random variables $X$ and $Y$, with independent copies $X'$ and
$Y'$, define their entropic Ruzsa distance by
$\dist[X;Y]:=\HH[X'-Y']-\tfrac12\HH[X]-\tfrac12\HH[Y]$.
It is nonnegative and vanishes precisely when $X$ and $Y$ are uniform on
cosets of one common subgroup~\cite{t10}.
Consequently, if $\cU_A$ is uniform on a set $A$ of doubling at most $K$,
then $\dist[\cU_A;\cU_A]\leq\log K$.

Take four independent variables $X_1,X_2$ distributed as $X$ and $Y_1,Y_2$
distributed as $Y$.
Set $T:=X_1+Y_1$, $\overline T:=X_2+Y_2$, $W:=X_1+X_2$, and
$\overline W:=Y_1+Y_2$.
Also set $S:=T+\overline T$, $V:=X_1+Y_2$, and $d:=\dist[X;Y]$.
These variables satisfy the two fibring identities of
GGMT~\cite[Corollary~4.2]{ggmt25}, which express each of $2d-I_1$ and
$2d-I_2$ as the sum of a sum-side distance and a fibre-side distance, where
$I_1=\II[T:V\mid S]$ and $I_2=\II[T:W\mid S]$.
Testing the four resulting pairs against the descent potential gives, whenever
none of them decreases the potential,

\begin{samepage}
\begin{equation}\label{eq:tech:shared:u}
u:=2\eta d-I_1\geq0,\qquad I_2-2\eta d\leq\frac{\eta}{1-\eta}u.\end{equation}
The constraints in Eq.~\eqref{eq:tech:shared:u}, the
characteristic-two identity $T+V+\overline W=0$, and the
entropic Balog--Szemer\'edi--Gowers estimate of Gowers, Green, Manners, and Tao~\cite[Lemma~A.2]{ggmt25} underlie the original endgame comparison.
\par
\end{samepage}

\paragraph{How GGMT obtain Eq.~\eqref{eq:tech:ggmt-threshold}.}
GGMT use penalties given by Ruzsa distance to two fixed reference variables
and minimise distance plus $\eta$ times the total penalty.
For the three conditioned endgame pairs coming from $(T,V,\overline W)$, let
$\widetilde\delta$ be the sum of their three mutual informations, and let
$P_{\mathrm G}$ be the total increase of the six reference-distance
penalties.

\begin{samepage}
We have
\begin{equation}\label{eq:tech:ggmt:bounds}\widetilde\delta\leq 6\eta d-\frac{1-5\eta}{1-\eta}u,\qquad P_{\mathrm G}\leq(6-3\eta)d+3u.\end{equation}
The first bound in Eq.~\eqref{eq:tech:ggmt:bounds} follows from the first estimate in Section~7 of~\cite{ggmt25}. The second bound follows from the second estimate there.
\par
\end{samepage}

\begin{samepage}
We have
\begin{equation}\label{eq:tech:ggmt:endgame}
d\leq
(1+\frac{\eta}{3})\widetilde\delta
 +\frac{\eta}{3}P_{\mathrm G}.
\end{equation}
where the step follows from the symmetrised endgame lemma of GGMT.
\par
\end{samepage}

\begin{samepage}
We have
\begin{align}
d
&\leq(8\eta+\eta^2)d
 -(
 (1+\frac{\eta}{3})\frac{1-5\eta}{1-\eta}-\eta
 )u\leq(8\eta+\eta^2)d.
\label{eq:tech:ggmt-threshold}
\end{align}
The first step follows from Eqs.~\eqref{eq:tech:ggmt:bounds} and~\eqref{eq:tech:ggmt:endgame}, and simple algebra.
The second step follows from Eq.~\eqref{eq:tech:shared:u}, since
$u\geq0$ and the coefficient of $u$ in the first line is nonpositive in the
relevant range.
For $d>0$ this is contradictory when $8\eta+\eta^2<1$.
GGMT take $\eta=1/9$; their entropic coefficient is then
$\eta^{-1}+2=11$, and the standard conversion from the entropic statement to
a covering statement costs one further power of $K$, giving exponent $12$.
\par
\end{samepage}

\subsection{Summary of Liao24}
\label{sec:tech:liao}

\paragraph{What Liao reuses from GGMT.}
Liao keeps unchanged the entropic Ruzsa distance and its equality case, the
compactness/minimisation descent, the four independent copies and the sums
$T,\overline T,V,W,\overline W,S$, the two fibring identities
\cite[Eqs.~(1) and~(2)]{l24}, the same four primary candidates, and therefore
the same two constraints on $u,I_1,I_2$ in Eq.~\eqref{eq:tech:shared:u}.
He also keeps the identity $T+V+\overline W=0$, the same three conditioned endgame
candidates, and the GGMT entropic Balog--Szemer\'edi--Gowers bound
\cite[Eq.~(4)]{l24}, which bounds the sum of their three distances by
$3I_1+6I_2$.
Thus Liao does not change the distance side of the descent.

\paragraph{What Liao changes.}
Liao abstracts the reference-distance penalties into two admissible functions
$\tau_A,\tau_B$ and uses

\begin{samepage}
\begin{equation}\label{eq:tech:potential}
\phi[X;Y]:=d+\eta\tauzero,
\qquad
\tauzero:=\tau_A(X)+\tau_B(Y).
\end{equation}
\par
\end{samepage}
Admissible penalties are continuous and translation invariant. Their
increase under convolution or conditioning is controlled by an entropy
difference~\cite[Lemma~7]{l24}.
Liao bounds each endgame penalty using sums~\cite[Eqs.~(12)--(14)]{l24}
and fibres~\cite[Eqs.~(15)--(17)]{l24}.
He averages these two estimates for each candidate and sums over the three
candidates to obtain his Eq.~(18):
\begin{samepage}
\begin{equation}\label{eq:tech:liao:penalty}
\tau_{\mathrm{eg}}
 \leq 3\tauzero+6d+(I_2-I_1).
\end{equation}
\par
\end{samepage}
Figure~\ref{fig:tech:liao:endgame} illustrates
Eq.~\eqref{eq:tech:liao:penalty}.
If none of the three endgame candidates decreases $\phi$, their total
potential is at least $3\phi[X;Y]=3d+3\eta\tauzero$.

\begin{samepage}
We have
\begin{equation}\label{eq:tech:liao-threshold}
d\leq I_1+2I_2+\frac{\eta}{3}(\tau_{\mathrm{eg}}-3\tauzero)
\leq I_1+2I_2+\frac{\eta}{3}(6d+I_2-I_1)
\leq 8\eta d+\frac{-3+10\eta}{3(1-\eta)}u
\leq8\eta d.
\end{equation}
The first step follows from the absence of descent,
Eq.~\eqref{eq:tech:potential}, and the GGMT distance bound
\cite[Eq.~(4)]{l24}, followed by simple algebra.
The second step follows from Eq.~\eqref{eq:tech:liao:penalty}.
The third step follows from Eq.~\eqref{eq:tech:shared:u} and simple algebra.
The last step follows because $(-3+10\eta)/(3(1-\eta))<0$ for
$\eta<1/8$ and $u\geq0$ by Eq.~\eqref{eq:tech:shared:u}.
This is the precise endgame improvement: Liao removes the $\eta^2d$ loss from
GGMT's Eq.~\eqref{eq:tech:ggmt-threshold}.
Hence Eq.~\eqref{eq:tech:liao-threshold} is impossible for $d>0$ and
$\eta<1/8$.
\par
\end{samepage}

For the covering statement, Liao makes one further change that is separate
from the endgame calculation.
He takes
$\tau_A=\tau_B=\tfrac12(\tau^-+\tau^+)$, where $\tau^-$ is the least
Kullback--Leibler divergence to a smoothing of $\cU_A$ and
$\tau^+(Z)=\tau^-(Z)+\HH[Z]-\HH[\cU_A]$.
Its exact dense-coset identity~\cite[Lemma~11 and Claim~12]{l24} converts the
coefficient $8$ into a coset of relative density $K^{-8}$.
Ruzsa covering costs one more power of $K$, giving exponent $9$.

\subsection{Summary of Our Upper Bound}
\label{sec:tech:upper}

\paragraph{Diagonal and branch hypotheses.}
We use one common admissible penalty $\tau$ with
$|\tau(X\mid D)-\tau(X)|\leq\frac12\II[X:D]$.
Lemma~\ref{lem:KL:conditioning} proves this property for Liao's KL penalty.
Set $M_\tau:=\min_{V\leq G}2\tau(\cU_V)$.
The assertion $\mathsf D_\tau(B)$ bounds $M_\tau$ by
$2\tau(X)+B\dist[X;X]$ for every law $X$.
For a cyclic law $A+B+C=0$, set
$x:=\II[A:B]$ and $s:=\II[A:C]+\II[B:C]$.
The 84 assertions $\mathsf C_{j,\tau}(1)$ are
\begin{equation}\label{eq:tech:weighted:potential}
M_\tau-2\tau(A\mid C)\leq\alpha_jx+\beta_js,
\qquad j\in\{0,\ldots,83\},
\end{equation}
where Definition~\ref{def:finite:diagonal} selects the exact coefficients.
The assertion $\mathsf C_{j,\tau}(q)$ multiplies its right side by $q$.

\paragraph{Diagonal bounds for posterior ensembles.}
An ensemble consists of conditional laws together with their probabilities.
Define the chain penalty
$\tau^{[N]}(Z):=\sum_{i=1}^N\tau(Z_i\mid Z_1,\ldots,Z_{i-1})$ on $G^N$.
Lemma~\ref{lem:penalty:chain} proves that it belongs to the same penalty
class, that $M_{\tau^{[N]}}=NM_\tau$, and that iterating powers multiplies
their indices.
For a finite ensemble $(p_i,P_i)_{i=1}^m$, if
$\mathsf D_{\tau^{[N]}}(B)$ holds for every $N$, then
Lemma~\ref{lem:inverse:ensemble} gives
\begin{equation}\label{eq:tech:ensemble}
M_\tau\leq2\sum_{i=1}^m p_i\tau(P_i)
 +B\sum_{i=1}^m\sum_{j=1}^m p_ip_j\dist[P_i;P_j].
\end{equation}
Its proof samples one ensemble index and then $N$ independent coordinates
from that law. After division by $N$, the entropy cost of the index tends
to zero. We impose all 85 hypotheses on every
$\rho_N:=\tau^{[N]}$.
The same bound applies to a weighted union of two posterior families.
Choosing the first family with probability $\omega$ gives weights
$\omega^2$ and $(1-\omega)^2$ on the two self-distance terms,
and $2\omega(1-\omega)$ on the cross term.
The complete combination is one inverse cost.

\paragraph{Simultaneous inner and outer comparison.}
The cyclic triple in Section~\ref{sec:tech:ggmt} motivates the inner
construction. After fixing $C$, sample fresh independent copies of
$A\mid C$ and form conditional states.
A rank-two quotient supplies a cyclic law to which
Eq.~\eqref{eq:tech:weighted:potential} applies.
The configurations of Definition~\ref{def:inner:quotient} also permit
quotients retaining the original $C$.
The conditioning rules underlying Eq.~\eqref{eq:tech:liao:penalty},
together with reverse conditioning, control the remaining penalties.
Lemma~\ref{lem:ensemble:flags} identifies the matching posterior ensembles
to which Eq.~\eqref{eq:tech:ensemble} applies.
The outer comparison uses the same rules on independent copies of $X$.

The main diagonal coefficient $B$ in Definition~\ref{def:diagonal:constants}
satisfies $B<4.287$.
It supplies every weaker diagonal cap in $[B,B_0]$.
The 84 cyclic comparisons use at most 32 fresh conditional copies;
the outer comparison uses at most 36 original copies.
Their source pairs are above convex combinations of the input pairs.
Lemma~\ref{lem:cyclic:convex} supplies these source bounds, and
Lemma~\ref{lem:cyclic:substitution} applies them to the conditional
quotients. Each comparison is a nonnegative rational combination of
entropy, conditioning, inverse, and quotient rows, with total cost
mass one. Appendix~\ref{app:coefficients} defines the complete finite
row families and selects their coefficients by rational linear systems.
Lemma~\ref{lem:diagonal:comparison} gives the following implication,
with both sides required for every $N$:
\begin{equation}\label{eq:tech:our-threshold}
\mathsf D_{\rho_N}(B),\ \{\mathsf C_{j,\rho_N}(1)\}_{j=0}^{83}
\quad\Longrightarrow\quad
\mathsf D_{\rho_N}(c),\ \{\mathsf C_{j,\rho_N}(\vartheta)\}_{j=0}^{83},
\end{equation}
where $c<B<4.287$ and $c/B\leq\vartheta:=1-10^{-13}<1$.
Lemma~\ref{lem:finite:feasibility} establishes feasibility of the finite systems.

\paragraph{Uniform initialization and scaling.}
Liao's inverse lemma~\cite[Lemma~7]{l24}, whose closing inequality is
Eq.~\eqref{eq:tech:liao-threshold}, gives $\mathsf D_{\rho_N}(8)$
on every chain power.
Lemma~\ref{lem:branch:initial} gives
$M_{\rho_N}-2\rho_N(A\mid C)\leq8(x+s)$.
Since $8\leq8B,8\alpha_j,8\beta_j$, these estimates give all hypotheses
on the left side of Eq.~\eqref{eq:tech:our-threshold} for $\tau/8$.
If they hold for $\tau/\lambda$, the simultaneous improvement gives them
for $\tau/\max\{1,\vartheta\lambda\}$.
Starting at $\lambda=8$, this iteration reaches one in finitely many steps.
A final comparison at scale one gives the diagonal coefficient $c$ in
Lemma~\ref{lem:diagonal:refined}.

Lemma~\ref{lem:KL:covering} gives the subgroup formula for Liao's KL
penalty from Section~\ref{sec:tech:liao}. It converts the diagonal
bound into a dense coset and yields
$C_0=1+c<5.287$ in Theorem~\ref{thm:main:formal}.
Figure~\ref{fig:tech:ours:endgame} shows the simultaneous comparison
and scaling argument.
Appendix~\ref{app:two:law} uses the diagonal and cyclic bounds to refine
the general comparison. Lemma~\ref{lem:aux:general} gives the
general inverse coefficient $4.793$, and
Corollary~\ref{cor:entropic:general} gives the two-law entropic coefficient
$7.793$. Definition~\ref{def:finite:auxiliary} specifies its finite
coefficient construction.

\begin{figure}[p]
  \centering
  \includegraphics[width=0.98\textwidth]{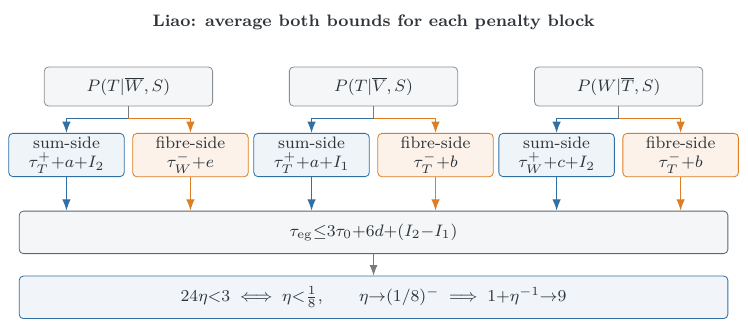}
  \caption{Liao's endgame aggregation~\cite{l24}.
  The two penalty bounds for each branch are averaged to obtain
  Eq.~\eqref{eq:tech:liao:penalty}.}
  \label{fig:tech:liao:endgame}
  \vspace{1.2em}
  \includegraphics[width=0.98\textwidth]{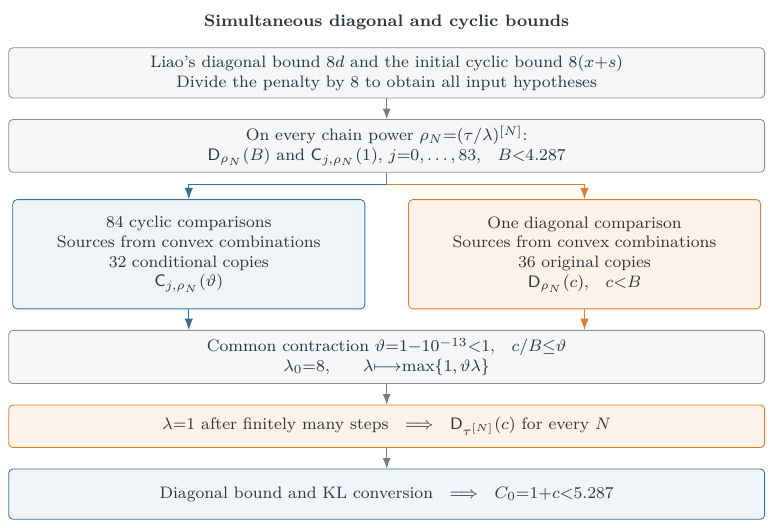}
  \caption{Simultaneous improvement of the diagonal and cyclic bounds
  on every chain power. Convex combinations supply every
  source cyclic inequality. Liao's inverse bound and the
  initial cyclic estimate supply all hypotheses for $\tau/8$.
  Each input bound improves by a factor at most $\vartheta<1$.
  Replacing $\lambda$ by $\max\{1,\vartheta\lambda\}$ reaches scale one
  and gives the covering exponent $C_0<5.287$.}
  \label{fig:tech:ours:endgame}
\end{figure}

%% file: 06_preli.tex
\section{Preliminaries}
\label{sec:preli}

Section~\ref{sec:preli:notation} fixes notation.
Section~\ref{sec:preli:covering} states Marton's covering problem.
Section~\ref{sec:preli:entropy} introduces the distance, penalties, and
covering lemma used in the proof.
Section~\ref{sec:preli:copies} sets up the inverse bounds and conditional copies.
Section~\ref{sec:preli:outer} describes the outer copy constructions.

\subsection{Notation}
\label{sec:preli:notation}

All logarithms have base $2$.
For a positive integer $n$, define $G:=\F_2^n$.
The notation $H\leq G$ means that $H$ is a subspace of $G$.
For $A\subseteq G$, define $A+A:=\{a+a':a,a'\in A\}$, and since $G$ has
characteristic $2$ subtraction and addition agree.
For $x\in G$ the set $x+H$ is a coset of $H$, also called a translate of $H$.
For a $G$-valued random variable $X$, let $\HH[X]$ denote its Shannon entropy,
and let $\cU_S$ denote the uniform distribution on a finite nonempty set
$S\subseteq G$.
For jointly distributed random variables, define
\[
\HH[X\mid Z]:=\E_{z\sim Z}[\HH[X\mid Z=z]]
\]
and
\[
\II[X:Y\mid Z]
:=\HH[X\mid Z]+\HH[Y\mid Z]-\HH[X,Y\mid Z]
\]
for conditional entropy and conditional mutual information.
For probability distributions $P$ and $Q$ on $G$, write
$\{x_1,\ldots,x_r\}:=\{x\in G:P(x)>0\}$ and define
\[
\KL(P\,\|\,Q)
:=\sum_{i=1}^rP(x_i)\log\frac{P(x_i)}{Q(x_i)},
\]
with value $+\infty$ if $P(x)>0$ and $Q(x)=0$ for some $x$.
For a function $\tau$ on distributions, define
\[
\tau(X\mid Z):=\E_{z\sim Z}[\tau(X\mid Z=z)].
\]

For distributions $\mu,\nu$ on $G$, the convolution $\mu*\nu$ is the law
of the sum of independent variables with those laws. The notation $\mu^{*k}$
denotes convolution of $k$ independent copies of $\mu$.

\subsection{The covering problem}
\label{sec:preli:covering}

We quantify Marton's covering conjecture through the exponent $C$ governing
the polynomial dependence on the doubling constant $K$.  The optimal value
$C_*$ is the infimum formalized below.

\begin{definition}[Marton's polynomial Freiman--Ruzsa problem~\cite{r99}]
\label{prob:marton}
Determine the infimum of all real numbers $C\geq0$ for which the following
assertion holds.
Suppose $n$ is a positive integer, $A\subseteq \F_2^n$ is nonempty, and
$K\geq1$ satisfies $|A+A|\leq K|A|$.
Then $A$ is covered by at most $2K^C$ translates of a subspace
$H\leq \F_2^n$ with $|H|\leq|A|$.
\end{definition}

\subsection{Entropic Ruzsa distance and penalties}
\label{sec:preli:entropy}

We first recall the entropic analogue of Ruzsa distance.

\begin{definition}[Entropic Ruzsa distance~\cite{t10}]\label{def:ruzsa}
Suppose $X$ and $Y$ are $G$-valued random variables.
Let $X'$ and $Y'$ be independent copies of $X$ and $Y$.
The entropic Ruzsa distance is
\begin{equation*}
\dist[X;Y]
:=
\HH[X'-Y']-\frac12\HH[X]-\frac12\HH[Y].
\end{equation*}
\end{definition}

We also use two conditional forms.

\begin{definition}[Jointly conditioned entropic Ruzsa distance]
\label{def:ruzsa:conditional:joint}
Suppose $X,Y,Z$ are jointly distributed.
The jointly conditioned entropic Ruzsa distance is
\begin{equation*}
\dist[(X;Y)\mid Z]
:=
\E_{z\sim Z}[\dist[X\mid Z=z;Y\mid Z=z]],
\end{equation*}
If the conditioning variable is a tuple, its components are sampled jointly.
\end{definition}

For independent pairs, we average over the two conditioning variables separately.

\begin{definition}[Independently conditioned entropic Ruzsa distance]
\label{def:ruzsa:conditional:independent}
Suppose $(X,Z)$ is independent of $(Y,W)$.
The independently conditioned entropic Ruzsa distance is
\begin{equation*}
\dist[X\mid Z;Y\mid W]
:=
\E_{\substack{z\sim Z\\w\sim W}}[\dist[X\mid Z=z;Y\mid W=w]],
\end{equation*}
where $z$ and $w$ are sampled independently from their marginals.
\end{definition}

Liao's abstract penalty lemma~\cite[Lemma~7]{l24} uses the following notion
of admissibility.

\begin{definition}[Admissible penalty]\label{def:penalty}
A continuous real-valued function $\tau$ on $G$-valued distributions is
admissible if the following properties hold.
Suppose $X$ and $Y$ are independent $G$-valued random variables.
Suppose $Z$ is jointly distributed with $X$.
Then
\begin{align*}
\tau(X+Y)
&\leq \tau(X)+\frac12(\HH[X+Y]-\HH[X]),\\
\tau(X\mid Z)
&\leq \tau(X)+\frac12(\HH[X]-\HH[X\mid Z]),\\
\tau(X+s)&=\tau(X)\qquad\text{for every $s\in G$}.
\end{align*}
\end{definition}

The improved comparison also uses the reverse conditioning inequality.

\begin{definition}[Two-sided-admissible penalty]\label{def:penalty:two:sided}
An admissible penalty $\tau$ is two-sided-admissible if, for every joint
law of $X,D$,
\[
\tau(X)\leq\tau(X\mid D)+\frac12\II[X:D].
\]
Equivalently, its conditioning differences satisfy
$|\tau(X\mid D)-\tau(X)|\leq\frac12\II[X:D]$.
\end{definition}

We state the following dense-slice-to-covering lemma of
Liao~\cite[Lemma~9]{l24}, which isolates the covering argument from Gowers,
Green, Manners, and Tao~\cite[Appendix~B]{ggmt25}.

\begin{lemma}[Dense slice to covering, {\cite[Lemma~9]{l24}}]\label{lem:slice:cover}
Suppose $A\subseteq G$ is nonempty.
Suppose $|A+A|\leq K|A|$.
Suppose $R\geq 1$.
Suppose $V\leq G$ and $t\in G$ satisfy
\[
    |A\cap(V+t)|\geq \frac{\max\{|A|,|V|\}}{R}.
\]
Then $A$ is covered by at most $2KR$ translates of a subspace $V'\leq G$ with $|V'|\leq |A|$.
\end{lemma}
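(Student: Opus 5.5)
We prove Lemma~\ref{lem:slice:cover} with the standard Ruzsa covering argument from \cite[Appendix~B]{ggmt25}, treating separately the cases $|V|\leq|A|$ and $|V|>|A|$.

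\emph{Reduction to a large intersection with a subgroup of size at most $|A|$.} First suppose $|V|\leq|A|$ and set $V':=V$. Then the hypothesis reads $|A\cap(V+t)|\geq|A|/R$. Now suppose $|V|>|A|$. We want to pass to a subspace $V'\leq V$ with $|A|/2<|V'|\leq|A|$. Averaging the set $A\cap(V+t)$ over the cosets of $V'$ inside $V+t$, we can find a coset $V'+t'$ with
\[
|A\cap(V'+t')|\geq\frac{|A\cap(V+t)|\,|V'|}{|V|}\geq\frac{|V'|}{R}.
\]
In both cases we obtain a subspace $V'$ with $|V'|\leq|A|$ and $|A\cap(V'+t')|\geq|V'|/R'$, where $R'$ is comparable to $R$. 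For the general formula we use the normalisation $|A\cap(V'+t')|\geq\max\{|A|,|V'|\}/R$ in the first case and $|V'|/R$ in the second.

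\emph{Ruzsa covering.} Set $S:=A\cap(V'+t')$. Choose a maximal set $\{a_1,\dots,a_k\}\subseteq A$ such that the translates $a_i+S$ are pairwise disjoint. All these translates lie in $A+A$, so
\[
k|S|\leq|A+A|\leq K|A|.
\]
By maximality, every $a\in A$ satisfies $a\in a_i+S+S$ for some $i$. Since $S+S\subseteq V'$, the set $A$ is covered by the $k$ translates $a_i+V'$.

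\emph{Counting.} The number of translates is $k\leq K|A|/|S|$. In the case $|V|\leq|A|$ we have $|S|\geq|A|/R$, so $k\leq KR$. In the case $|V|>|A|$ we have $|S|\geq|V'|/R>|A|/(2R)$, so $k<2KR$. Either way $A$ is covered by at most $2KR$ translates of $V'$, and $|V'|\leq|A|$ as required.

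The only delicate point is choosing $V'$ in the second case. A subspace of $V$ of size exactly $2^{\lfloor\log|A|\rfloor}$ exists because $V$ is an $\F_2$-space with $|V|>|A|$. This choice gives the loss factor $2$, and that factor is the source of the constant $2$ in Definition~\ref{prob:marton}.
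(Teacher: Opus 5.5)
Your proof is correct and is essentially the standard argument that the paper imports by citation (Liao's Lemma~9, which isolates the covering step of GGMT's Appendix~B). That argument is Ruzsa covering by translates of $S+S\subseteq V'$, with $k\leq|A+A|/|S|$, together with passing to a subspace of size in $(|A|/2,|A|]$ when $|V|>|A|$. The only difference is the order of steps. You pigeonhole to a dense coset of the smaller subspace and then cover, whereas the cited argument covers $A$ by at most $K|A|/|A\cap(V+t)|$ cosets of $V$ and then splits each into fewer than $2|V|/|A|$ cosets of $V'$. Both give the same bound $2KR$. Your sentence about $R'$ being ``comparable'' to $R$ can be deleted, since the counting paragraph already handles both cases exactly.
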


\subsection{Common-penalty inverse bounds and conditional copies}
\label{sec:preli:copies}

We compare general and diagonal inverse bounds for the same penalty.

\begin{definition}[Common-penalty inverse bounds]\label{def:inverse:bounds}
Suppose $\tau$ is admissible on $G$. Define
$M_\tau:=\min_{V\leq G}2\tau(\cU_V)$.
For $c\geq0$, let $\mathsf G_\tau(c)$ denote the assertion
\[
M_\tau\leq\tau(X)+\tau(Y)+c\dist[X;Y]
\quad\text{for all laws $X,Y$ on $G$}.
\]
Let $\mathsf D_\tau(c)$ denote the assertion
\[
M_\tau\leq2\tau(X)+c\dist[X;X]
\quad\text{for all laws $X$ on $G$}.
\]
\end{definition}

Chain powers extend a penalty to arbitrary joint laws on product groups.

\begin{definition}[Chain power of a penalty]\label{def:penalty:chain}
Suppose $\tau$ is a two-sided-admissible penalty on $G$ and $N\geq1$.
For a law $Z=(Z_1,\ldots,Z_N)$ on $G^N$, define
\[
\tau^{[N]}(Z):=\sum_{i=1}^N\tau(Z_i\mid Z_1,\ldots,Z_{i-1}).
\]
The first term is unconditioned. Blocks and their coordinates are ordered
lexicographically when powers are iterated.
\end{definition}

An ensemble records the conditional laws and their probabilities.

\begin{definition}[Posterior ensemble]\label{def:posterior:ensemble}
For a finite joint law $(X,S)$ with $X$ taking values in $G$, its posterior
ensemble is $(p_s,P_s)_s$, where $p_s:=\Pr(S=s)>0$ and
$P_s:=\operatorname{Law}(X\mid S=s)$.
Two ensembles $(p_s,P_s)_s$ and $(q_t,Q_t)_t$ agree up to translation
if their indices can be coupled with these marginals so that $P_s$ is
a translate of $Q_t$ at every pair having positive coupling probability.
Distances between independently drawn ensembles use the product weights
$p_sq_t$.
\end{definition}

Weighted ensembles allow the two posterior families to differ.

\begin{definition}[Weighted ensembles]\label{def:ensemble:weighted}
Suppose $\mathcal F=(p_i,P_i)_{i=1}^r$ is a finite family of laws on $G$
with probabilities $p_i$.
For a second such family $\mathcal G=(q_j,Q_j)_{j=1}^s$, define
\[
T_\tau(\mathcal F):=\sum_{i=1}^r p_i\tau(P_i),\qquad
D(\mathcal F,\mathcal G):=\sum_{i=1}^r\sum_{j=1}^s p_iq_j\dist[P_i;Q_j].
\]
For $0<\omega<1$, their weighted union $\mathcal F_\omega$ assigns
weight $\omega p_i$ to each labeled law $P_i$ and weight
$(1-\omega)q_j$ to each labeled law $Q_j$.
\end{definition}

The following constants specify the hypotheses and their strict upgrades.

\begin{definition}[Comparison constants]\label{def:copy:coefficients}
Define
\begin{align*}
\eta&:=\frac2{11},&\theta&:=\frac{2499999}{2500000},\\
a&:=\frac{5051279389241}{1000000000000},&a'&:=\frac{2525637}{500000},\\
b&:=\frac{2469866735027}{500000000000},&b'&:=\frac{4939729}{1000000}.
\end{align*}
\end{definition}

The simultaneous diagonal and branch comparison uses one common contraction.

\begin{definition}[Diagonal comparison constants]\label{def:diagonal:constants}
Define
\begin{align*}
B&:=\frac{4286816997}{1000000000},&c&:=\frac{857363397}{200000000},\\
B_0&:=\frac{2927}{625},&\vartheta&:=1-10^{-13},\\
C_0&:=1+c=\frac{1057363397}{200000000}.
\end{align*}
\end{definition}

The cyclic comparisons use coefficient pairs on one convex envelope.

\begin{definition}[Simultaneous branch coefficients]\label{def:diagonal:branches}
For $j\in\{0,\ldots,83\}$, let $(\alpha_j,\beta_j)$ be the $j$th
pair selected by the finite rational construction in
Definition~\ref{def:finite:diagonal}.
Lemma~\ref{lem:finite:feasibility} proves that the construction is defined.
The pairs are ordered by increasing first coordinate.
\end{definition}

Binary row spaces describe conditioning tuples. A flag records a
conditioning space and one additional output coordinate.

\begin{definition}[Conditional linear state]\label{def:copy:state}
Suppose $Z_1,\ldots,Z_N$ are independent with law $\mu$ on $G$.
For $v\in\F_2^N$, define $Z_v:=\sum_{i=1}^N v_iZ_i$.
For a row space $W$, let $Z_W$ be the tuple of sums associated with any
basis of $W$. A conditional state is a flag $f=(W\subset U)$, where
$U=\operatorname{span}(W,v)$ and $v\notin W$, representing $Z_v\mid Z_W$.
For a common admissible penalty $\tau$, define
\[
t_\mu:=\HH[\mu],\qquad e_\mu(W):=\HH[Z_W]-\dim(W)t_\mu,
\qquad R_\mu(f):=2\tau(f)-2\tau(\mu)+\HH[f]-t_\mu.
\]
Entropy and penalty of a state are averaged over its conditioning values.
Its support consists of the copy coordinates occurring in $U$.
States on disjoint supports have independent conditioning values.
\end{definition}

The inner comparisons average the two orientations of one cyclic branch.

\begin{definition}[Normalized branch quantities]\label{def:cyclic:copies}
Suppose $A+B+C=0$. Define
\begin{align*}
x&:=\II[A:B],&s&:=\II[A:C]+\II[B:C],\\
t&:=\HH[A\mid C]=\HH[B\mid C],&
m&:=\frac{\HH[A]+\HH[B]}2-t,\qquad e:=\HH[C]-t.
\end{align*}
After fixing $C=c$, sample $Z_1,\ldots,Z_{32}$ independently from
$\mu_c:=\operatorname{Law}(A\mid C=c)$.
For the second orientation sample $\widetilde Z_1,\ldots,\widetilde Z_{32}$
independently from $\operatorname{Law}(B\mid C=c)$.
For a row space $V$ on $(C,Z_1,\ldots,Z_{32})$ and a copy row space $W$, set
\begin{align*}
\mathscr U(V)&:=\tfrac12(\HH[V(C,Z)]+\HH[V(C,\widetilde Z)])-\dim(V)t,\\
\mathscr E(W)&:=\HH[Z_W\mid C]-\dim(W)t,\qquad
\mathscr R_f:=\E_c R_{\mu_c}(f),\\
\mathfrak h_f&:=\mathscr E(U_f)-\mathscr E(W_f).
\end{align*}
For a coefficient row $v$ on $(C,Z_1,\ldots,Z_{32})$, the notation
$Z_v$ also includes its indicated multiple of $C$.
The entropies in $\mathscr U$ are unconditional on $C$.
With additional conditioning, these quantities are first formed for
each conditional cyclic law of positive probability and then averaged.
\end{definition}

The two inner quotient configurations use either conditional copy
entropies or entropies retaining the original conditioning coordinate.

\begin{definition}[Inner cyclic quotients]\label{def:inner:quotient}
Use Definition~\ref{def:cyclic:copies}, and let $f=(W_f\subset U_f)$
be a conditional state. Suppose $K\subset T$ has quotient dimension two,
with distinct intermediate spaces $W_1,W_2,W_3$, and select $i\in\{1,2,3\}$.
Allow either of the following configurations:
\begin{enumerate}
\item All spaces use only copy coordinates,
$T=U_f$, $W_i=W_f$, and $\mathscr F:=\mathscr E$.
\item $T=\operatorname{span}(C,U_f)$,
$W_i=\operatorname{span}(C,W_f)$, $C\notin K$, and
$\mathscr F:=\mathscr U$.
\end{enumerate}
For $\{i,j,k\}=\{1,2,3\}$, define
\begin{align*}
x_i&:=\mathscr F(W_j)+\mathscr F(W_k)-\mathscr F(T)-\mathscr F(K),\\
s_i&:=2\mathscr F(W_i)+\mathscr F(W_j)+\mathscr F(W_k)
       -2\mathscr F(T)-2\mathscr F(K).
\end{align*}
\end{definition}

The branch hypotheses prescribe eleven fixed shapes in the two
nonnegative information quantities $x,s$.

\begin{definition}[Universal branch hypotheses]\label{def:branch:hypotheses}
Let $(u_j,v_j)$, for $0\leq j\leq10$, be the pairs selected in
Definition~\ref{def:finite:general}, and set $q_j:=\theta$.
Lemma~\ref{lem:finite:feasibility} proves that these pairs are defined.
For $r\geq0$, let $\mathsf B_{j,\tau}(r)$ denote the assertion
\begin{equation}\label{eq:branch:hypothesis}
\eta(M_\tau-2\tau(A\mid C))\leq r(u_jx+v_js)
\quad\text{for every law $A+B+C=0$ on $G$},
\end{equation}
where $x,s$ are as in Definition~\ref{def:cyclic:copies}.
Under additional finite conditioning, the assertion is applied to each
conditional cyclic law of positive probability and then averaged,
with the same $M_\tau$.
\end{definition}

The final comparison uses the branch coefficients from
Definition~\ref{def:diagonal:branches} directly.

\begin{definition}[Simultaneous cyclic hypotheses]\label{def:closed:hypotheses}
For $j\in\{0,\ldots,83\}$ and $q\geq0$, let
$\mathsf C_{j,\tau}(q)$ denote the assertion
\[
M_\tau-2\tau(A\mid C)\leq q(\alpha_jx+\beta_js)
\quad\text{for every law $A+B+C=0$ on $G$},
\]
where $x,s$ are as in Definition~\ref{def:cyclic:copies}.
With additional finite conditioning, apply the assertion to each
conditional cyclic law of positive probability and average.
\end{definition}

\subsection{Outer copies and cyclic quotients}
\label{sec:preli:outer}

The two outer comparisons use different sampling symmetries.

\begin{definition}[General outer copies]\label{def:outer:copies}
Suppose $X_1,\ldots,X_8$ have law $X$ and $Y_1,\ldots,Y_8$ have law $Y$,
all mutually independent. Set $t:=(\HH[X]+\HH[Y])/2$,
$d:=\dist[X;Y]$, and $\tauzero:=\tau(X)+\tau(Y)$.
The subscript $\mathrm{sym}$ denotes the average with the value obtained
by globally interchanging the laws $X$ and $Y$.
For a coefficient space $U$, define
$\mathcal E(U):=\HH_{\mathrm{sym}}[Z_U]-\dim(U)t$.
The average is taken after evaluating each entropy or penalty.
\end{definition}

For the diagonal comparison, all outer copies have the same law.

\begin{definition}[Diagonal outer copies]\label{def:outer:diagonal}
Suppose $Z_1,\ldots,Z_{36}$ are independent with law $X$.
Set $t:=\HH[X]$, $d:=\dist[X;X]$, and $\tauzero:=2\tau(X)$.
For a coefficient space $U$, define
$\mathcal E(U):=\HH[Z_U]-\dim(U)t$.
In this setting the subscript $\mathrm{sym}$ leaves a quantity unchanged.
\end{definition}

The same state normalization applies to both sampling conventions.

\begin{definition}[Outer conditional state]\label{def:outer:state}
Under Definition~\ref{def:outer:copies} or~\ref{def:outer:diagonal},
let $f=(W\subset U)$ with $\dim(U/W)=1$ represent $Z_v\mid Z_W$,
where $v\in U\setminus W$. Define
\[
h_f:=\mathcal E(U)-\mathcal E(W),\qquad
\mathcal R(f):=2\tau_{\mathrm{sym}}(f)-\tauzero
 +\HH_{\mathrm{sym}}[f]-t.
\]
The support is the set of copy coordinates occurring in $U$.
Conditioning values of states on disjoint supports are independent.
\end{definition}

A two-dimensional quotient gives three cyclic branches.

\begin{definition}[Cyclic quotient budgets]\label{def:outer:quotient}
Suppose $K\subset U$ are outer coefficient spaces with $\dim(U/K)=2$.
Let $W_1,W_2,W_3$ be their distinct intermediate spaces and set
$f_i:=(W_i\subset U)$. For $\{i,j,k\}=\{1,2,3\}$ define
\begin{align*}
x_i&:=\mathcal E(W_j)+\mathcal E(W_k)-\mathcal E(U)-\mathcal E(K),\\
s_i&:=2\mathcal E(W_i)+\mathcal E(W_j)+\mathcal E(W_k)
       -2\mathcal E(U)-2\mathcal E(K),\\
\pi_i&:=\mathcal R(f_i)-\mathcal E(U)+\mathcal E(W_i).
\end{align*}
\end{definition}

%% file: 31_upper.tex
\section{Proof of the Upper Bound}\label{sec:proofs}

Section~\ref{sec:proofs:tensor} establishes the penalty and ensemble tools.
Section~\ref{sec:proofs:convolution} gives the entropy and penalty identities
used in the comparisons.
Section~\ref{sec:proofs:certificate} develops the quotient comparisons.
Section~\ref{sec:proofs:diagonal:refinement} proves the diagonal inverse bound.
Section~\ref{sec:proofs:consequences} deduces the entropic and covering
consequences, completing the proof of Theorem~\ref{thm:main:formal}.

\subsection{Chain powers and posterior ensembles}
\label{sec:proofs:tensor}

The two-sided conditioning inequality also controls continuity and
convolution. We first record these consequences.

\begin{lemma}[Consequences of two-sided conditioning]
\label{lem:penalty:core}
Suppose $\tau$ is a real-valued function on distributions on a finite
alphabet $\Omega=\{\omega_1,\ldots,\omega_q\}$. Define
$\Phi_+(P):=\HH[P]+2\tau(P)$ and $\Phi_-(P):=\HH[P]-2\tau(P)$.
The inequality
\begin{equation}\label{eq:core:two:sided}
|\tau(X\mid D)-\tau(X)|\leq\tfrac12\II[X:D]
\end{equation}
holds for every joint law of $X,D$ with $X$ taking values in $\Omega$
if and only if both $\Phi_+$ and $\Phi_-$ are concave.
In this case, $\tau$ is continuous.
If $\Omega=G$ and $\tau$ is translation invariant, then $\tau$ is
two-sided-admissible.

Suppose further that $\tau$ is two-sided-admissible on $G$.
Further conditioning of $g$ to obtain $f$, up to a known translate, gives
\begin{equation}\label{eq:core:conditioning}
0\leq\Phi_+(g)-\Phi_+(f)\leq2(\HH[g]-\HH[f]).
\end{equation}
A conditional convolution $f=g*h$ on disjoint supports, with
independently drawn conditioning values, gives
\begin{equation}\label{eq:core:convolution}
0\leq\Phi_+(f)-\Phi_+(g)\leq2(\HH[f]-\HH[g]).
\end{equation}
The entropy and penalty of each state are averaged over its conditioning
values.
\end{lemma}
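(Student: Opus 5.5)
The plan is to rewrite \eqref{eq:core:two:sided} as two one-sided inequalities and read each as a concavity statement. Write $D$ with law $(p_d)_d$ and posteriors $P_d:=\operatorname{Law}(X\mid D=d)$, so that $\operatorname{Law}(X)=\sum_dp_dP_d$ and $\II[X:D]=\HH[\sum_dp_dP_d]-\sum_dp_d\HH[P_d]$. The inequality $\tau(X\mid D)-\tau(X)\leq\frac12\II[X:D]$, multiplied by $2$ and rearranged, reads
\[
\textstyle\sum_dp_d\Phi_-(P_d)\leq\Phi_-(\sum_dp_dP_d).
\]
The reverse inequality $\tau(X)-\tau(X\mid D)\leq\frac12\II[X:D]$ reads
\[
\textstyle\sum_dp_d\Phi_+(P_d)\leq\Phi_+(\sum_dp_dP_d).
\]
Every finite mixture $\sum_dp_dP_d$ arises from some joint law of $(X,D)$, so these are exactly Jensen's inequality for $\Phi_\pm$ over finite convex combinations. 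They are therefore equivalent to concavity of $\Phi_+$ and $\Phi_-$ on the simplex, which proves the equivalence.

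Continuity follows because $\tau=\frac14(\Phi_+-\Phi_-)$. Both $\Phi_\pm$ are concave and finite on the simplex, which is a polytope, so each is continuous in the relative interior and lower semicontinuous up to the boundary. For upper semicontinuity at boundary points I would use the bound $|\Phi_+-\Phi_-|=4|\tau|$ together with the concavity of both functions. Alternatively, and more directly, I would argue from \eqref{eq:core:two:sided} itself. Write $P=(1-\epsilon)Q+\epsilon R$, which is possible for $Q$ near $P$ with $\epsilon=O(\|P-Q\|)$. Choosing the obvious two-component $D$ gives $|\tau(P)-(1-\epsilon)\tau(Q)-\epsilon\tau(R)|\leq\frac12 h(\epsilon)$. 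The remaining step is a uniform bound on $|\tau|$, which comes from applying the same inequality against a point mass and the uniform law. Together these give $|\tau(P)-\tau(Q)|\to0$. This continuity step is the one I expect to need the most care, specifically producing the decomposition near the boundary and the uniform bound on $\tau$. Given continuity, two-sided admissibility on $G$ with translation invariance reduces to checking admissibility. The conditioning axiom is one half of \eqref{eq:core:two:sided}. The convolution axiom follows by taking $D=Y$: since $X+Y$ conditioned on $Y=y$ is a translate of $X$, we get $\tau(X+Y\mid Y)=\tau(X)$. Also $\II[X+Y:Y]=\HH[X+Y]-\HH[X]$, and the other half of \eqref{eq:core:two:sided} then gives the convolution inequality.

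For \eqref{eq:core:conditioning}, let $f$ be $g$ further conditioned on an extra variable $D$; translates do not matter by translation invariance, and everything is averaged over the conditioning values of $g$. Then $\Phi_+(g)-\Phi_+(f)=\II+2(\tau(g)-\tau(g\mid D))$ with $\II=\HH[g]-\HH[f]$. By \eqref{eq:core:two:sided}, the quantity $2(\tau(g)-\tau(g\mid D))$ lies in $[-\II,\II]$, which gives $0\leq\Phi_+(g)-\Phi_+(f)\leq2\II$. For \eqref{eq:core:convolution}, view $g$ as $f$ conditioned on the independent summand $h$; since the supports are disjoint, this conditioning is legitimate. Then $g$ is, up to a known translate, a further conditioning of $f$ with information $\HH[f]-\HH[g]$, so \eqref{eq:core:conditioning} applied with the roles of $f$ and $g$ exchanged yields \eqref{eq:core:convolution}. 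Throughout, all averages over conditioning values are taken pointwise and then integrated, which is legitimate because each inequality is linear in the mixture weights.
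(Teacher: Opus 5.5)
Your equivalence argument, the convolution axiom via $D=Y$, and both $\Phi_+$ bounds (adding the entropy difference to the two-sided bound, and viewing $g$ as $f$ conditioned on the summand $h$) follow the paper's proof. There is one harmless slip. Doubling $\tau(X\mid D)-\tau(X)\leq\tfrac12\II[X:D]$ and rearranging gives $\sum_dp_d\Phi_+(P_d)\leq\Phi_+(\sum_dp_dP_d)$, not the $\Phi_-$ version. So your two labels are swapped. This does not affect the equivalence, since both inequalities are required.

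The genuine gap is in the continuity step.

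\textbf{Why your decomposition fails.} Writing $P=(1-\epsilon)Q+\epsilon R$ with $R$ a probability law requires $P\geq(1-\epsilon)Q$ pointwise. No such decomposition with $\epsilon<1$ exists once $Q$ charges a point where $P$ vanishes. For example, take $P=\delta_{\omega_1}$ and $Q=(1-r)\delta_{\omega_1}+r\delta_{\omega_2}$: you are forced to take $\epsilon=1$ however small $r$ is. Your argument therefore breaks down at exactly the boundary points you flagged, and continuity is not proved there.

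\textbf{The paper's fix.} Split off the common part of $P$ and $Q$. If $\delta\in(0,1)$ is their total variation distance, then $P=(1-\delta)R+\delta P_1$ and $Q=(1-\delta)R+\delta Q_1$ with the same $R$. Apply Eq.~\eqref{eq:core:two:sided} to both binary mixtures; each label has mutual information at most $h_2(\delta)$ with $X$. Subtracting gives $|\tau(P)-\tau(Q)|\leq\delta|\tau(P_1)-\tau(Q_1)|+h_2(\delta)$.

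\textbf{The range bound.} Condition $X$ on itself. This gives $|\tau(P)-\sum_iP(\omega_i)\tau(\delta_{\omega_i})|\leq\tfrac12\HH[P]\leq\tfrac12\log q$. No comparison with the uniform law is needed.

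\textbf{Other ways to close the gap.} Reversing your roles also works. For fixed $P$ and nearby $Q$, the decomposition $Q=(1-\epsilon)P+\epsilon R$ does exist with $\epsilon\leq\|P-Q\|_\infty/\min_{\omega\in\operatorname{supp}P}P(\omega)$. Your semicontinuity route can be completed as well, but through the sum rather than the difference $|\Phi_+-\Phi_-|=4|\tau|$. Finite concave functions on a polytope are lower semicontinuous. Since $\Phi_++\Phi_-=2\HH$ is continuous, each $\Phi_\pm=2\HH-\Phi_\mp$ is also upper semicontinuous. Hence $\Phi_+$, and therefore $\tau=\tfrac12(\Phi_+-\HH)$, is continuous.
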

\begin{proof}
For the mixture of the laws of $X$ given $D$, concavity of $\Phi_+$ and
$\Phi_-$ is equivalent to
\[
\II[X:D]\pm2(\tau(X)-\tau(X\mid D))\geq0.
\]
These are exactly the two inequalities in Eq.~\eqref{eq:core:two:sided}.

Write $\delta_x$ for the point mass at $x$.
Conditioning a variable $X$ with law $P$ on itself gives
\[
|\tau(P)-\sum_{i=1}^qP(\omega_i)\tau(\delta_{\omega_i})|\leq\tfrac12\HH[P].
\]
Since $\HH[P]\leq\log|\Omega|$, the range of $\tau$ has width at most
$B:=\max_x\tau(\delta_x)-\min_x\tau(\delta_x)+\log|\Omega|$.
For laws $P,Q$ at total variation distance $\delta\in(0,1)$, their
common part gives decompositions
$P=(1-\delta)R+\delta P_1$ and $Q=(1-\delta)R+\delta Q_1$.
Let $h_2$ denote binary entropy.
\begin{samepage}
We have
\[
|\tau(P)-\tau(Q)|
\leq\delta|\tau(P_1)-\tau(Q_1)|+h_2(\delta)
\leq\delta B+h_2(\delta).
\]
The first step follows from Eq.~\eqref{eq:core:two:sided} applied to
each binary mixture label and the triangle inequality.
The last step follows from the bound on the range of $\tau$.
\par
\end{samepage}
The resulting bound tends to zero with $\delta$, proving continuity.

For independent $X,Y$, translation invariance gives
$\tau(X+Y\mid Y)=\tau(X)$.
\begin{samepage}
We have
\[
|2\tau(X+Y)-2\tau(X)|
\leq\II[X+Y:Y]=\HH[X+Y]-\HH[X].
\]
The first step follows from Eq.~\eqref{eq:core:two:sided}.
The last step follows from independence and translation invariance
of entropy.
\par
\end{samepage}
This proves convolution admissibility. Together with continuity,
translation invariance, and Eq.~\eqref{eq:core:two:sided}, it proves
two-sided admissibility.
Adding the entropy difference to Eq.~\eqref{eq:core:two:sided}
gives Eq.~\eqref{eq:core:conditioning}; adding it to the preceding
convolution bound gives Eq.~\eqref{eq:core:convolution}.
For conditional states, first fix the earlier conditioning values
and then average. Known conditional translates preserve both quantities.
\end{proof}

The conditioning chain rule preserves the penalty class on product groups.
Uniform subgroup fibres determine its minimum exactly.

\begin{lemma}[Chain powers and subgroup minima]\label{lem:penalty:chain}
Suppose $\tau$ is two-sided-admissible on $G$. Every $\tau^{[N]}$
is two-sided-admissible on $G^N$. For positive integers $N,L$ and
$\lambda\geq1$,
\begin{equation}\label{eq:chain:identities}
M_{\tau^{[N]}}=NM_\tau,\qquad (\tau^{[N]})^{[L]}=\tau^{[NL]},\qquad (\tau/\lambda)^{[N]}=\tau^{[N]}/\lambda.
\end{equation}
For independent coordinates with laws $P_1,\ldots,P_N$,
\begin{equation}\label{eq:chain:product}
\tau^{[N]}(P_1\otimes\cdots\otimes P_N)=\sum_{i=1}^N\tau(P_i).
\end{equation}
\end{lemma}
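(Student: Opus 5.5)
The plan is to verify two-sided admissibility of $\tau^{[N]}$ through the concavity criterion of Lemma~\ref{lem:penalty:core}, and then obtain the identities in Eqs.~\eqref{eq:chain:identities} and~\eqref{eq:chain:product} from the chain rule and translation invariance. Eq.~\eqref{eq:chain:product} is immediate. When the coordinates are independent, the conditional law of $Z_i$ given $Z_1,\ldots,Z_{i-1}$ is $P_i$ for every conditioning value, so each summand equals $\tau(P_i)$. The scaling identity $(\tau/\lambda)^{[N]}=\tau^{[N]}/\lambda$ is linearity of the definition. For $(\tau^{[N]})^{[L]}=\tau^{[NL]}$, expand the outer chain. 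Its $\ell$-th term is $\tau^{[N]}$ of block $\ell$ conditioned on blocks $1,\ldots,\ell-1$. Expanding this conditioned $\tau^{[N]}$ gives, for each coordinate of block $\ell$, $\tau$ of that coordinate conditioned on all earlier blocks and the earlier coordinates within block $\ell$, averaged over the combined conditioning values. This is exactly the lexicographic chain defining $\tau^{[NL]}$.

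For two-sided admissibility, translation invariance is clear, because translating $Z$ by $s=(s_i)$ translates each conditional law $Z_i\mid Z_{<i}$ by $s_i$ and reindexes the conditioning values bijectively. By Lemma~\ref{lem:penalty:core} on the finite alphabet $G^N$, it then suffices to show $|\tau^{[N]}(Z\mid D)-\tau^{[N]}(Z)|\leq\frac12\II[Z:D]$ for every joint law of $(Z,D)$. Write $\tau^{[N]}(Z\mid D)=\sum_i\tau(Z_i\mid Z_{<i},D)$ and $\tau^{[N]}(Z)=\sum_i\tau(Z_i\mid Z_{<i})$. Compare the $i$-th terms after fixing $Z_{<i}=z$. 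The two-sided inequality for $\tau$, applied to the conditional joint law of $(Z_i,D)$ given $Z_{<i}=z$ and then averaged over $z$, gives
\[
|\tau(Z_i\mid Z_{<i},D)-\tau(Z_i\mid Z_{<i})|\leq\tfrac12\II[Z_i:D\mid Z_{<i}].
\]
Summing over $i$ and applying the chain rule $\sum_i\II[Z_i:D\mid Z_{<i}]=\II[Z:D]$ yields the claim. Lemma~\ref{lem:penalty:core} then supplies continuity and convolution admissibility on $G^N$.

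The main obstacle is $M_{\tau^{[N]}}=NM_\tau$, where the minimum runs over all subspaces $V\leq G^N$, not only product subspaces. The upper bound $M_{\tau^{[N]}}\leq NM_\tau$ takes $V=H^N$ with $H$ optimal for $\tau$ and applies Eq.~\eqref{eq:chain:product}. For the lower bound, take any $V\leq G^N$ and let $Z$ be uniform on $V$. Then each conditional law $Z_i\mid Z_{<i}=z$ is uniform on a coset of the fibre subspace $V_i:=\{v_i: v\in V,\ v_1=\cdots=v_{i-1}=0\}$. This holds because the fibre of the projection $V\to G^{i-1}$ over $z$ is a coset of the kernel, and its image in coordinate $i$ is a coset of $V_i$ with uniform pushforward, since a linear map pushes uniform measure on a coset forward to uniform measure on the image coset. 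By translation invariance, every such term equals $\tau(\cU_{V_i})\geq M_\tau/2$. Summing over $i$ gives $2\tau^{[N]}(\cU_V)\geq NM_\tau$. The step needing the most care is this uniform-fibre statement, and I expect the linear-algebra argument just given to settle it.
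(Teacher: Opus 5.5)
Your proposal is correct and follows the paper's proof essentially step for step. The matching steps are: translation invariance; the per-prefix two-sided inequality, averaged and summed through the mutual-information chain rule, with Lemma~\ref{lem:penalty:core} supplying continuity and convolution admissibility; the identities by chain expansion, linearity and independence; and $M_{\tau^{[N]}}=NM_\tau$, using the product subgroup for the upper bound and the uniform conditional laws on cosets of the fibre subspaces $V_i$ for the lower bound. Your linear-algebra justification of the uniform pushforward is exactly the paper's remark that all fibres of this projection have equal size.
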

\begin{proof}
A fixed translation reindexes the prefixes and translates every
conditional coordinate, so $\tau^{[N]}$ is translation invariant.
Write $Z_{<i}:=(Z_1,\ldots,Z_{i-1})$.
\begin{samepage}
We have
\begin{align}
|\tau^{[N]}(Z\mid D)-\tau^{[N]}(Z)|
&\leq\sum_{i=1}^N
 |\tau(Z_i\mid Z_{<i},D)-\tau(Z_i\mid Z_{<i})|\notag\\
&\leq\tfrac12\sum_{i=1}^N\II[Z_i:D\mid Z_{<i}]\notag\\
&=\tfrac12\II[Z:D].\label{eq:chain:conditioning}
\end{align}
The first step follows from the triangle inequality.
The second step follows from two-sided conditioning at each fixed
prefix and averaging.
The last step follows from the mutual-information chain rule.
\par
\end{samepage}
Lemma~\ref{lem:penalty:core} gives continuity and convolution
admissibility. Thus $\tau^{[N]}$ is two-sided-admissible.
Expanding the coordinate chains proves the second identity in
Eq.~\eqref{eq:chain:identities}. Linearity proves the third identity,
and independence proves Eq.~\eqref{eq:chain:product}.

Suppose $Z$ is uniform on $H\leq G^N$.
Its $i$th coordinate conditional on any prefix of positive probability
is uniform on a coset of the projection $V_i$ of
$\{z\in H:z_1=\cdots=z_{i-1}=0\}$ onto coordinate $i$.
All fibres of this projection have equal size. Translation invariance gives
\begin{equation}\label{eq:chain:fibre:lower}
2\tau^{[N]}(\cU_H)=\sum_{i=1}^N2\tau(\cU_{V_i})\geq NM_\tau.
\end{equation}
Conversely, if $V_*$ attains $M_\tau$, the product subgroup $V_*^N$
attains equality by Eq.~\eqref{eq:chain:product}.
This proves the first identity in Eq.~\eqref{eq:chain:identities}.
\end{proof}

A diagonal bound on all chain powers controls independent draws from one
finite ensemble, even when their realized laws differ.

\begin{lemma}[Averaged diagonal inverse bound]\label{lem:inverse:ensemble}
Suppose $\tau$ is two-sided-admissible and $b>0$.
Assume $\mathsf D_{\tau^{[N]}}(b)$ for every $N\geq1$.
Then every finite ensemble $(p_i,P_i)_{i=1}^m$ of laws on $G$ satisfies
\begin{equation}\label{eq:ensemble:inverse}
M_\tau\leq2\sum_{i=1}^m p_i\tau(P_i)
 +b\sum_{i=1}^m\sum_{j=1}^m p_ip_j\dist[P_i;P_j].
\end{equation}
If two posterior ensembles $f,g$ agree up to translation in
Definition~\ref{def:posterior:ensemble}, then
\begin{equation}\label{eq:ensemble:pair}
M_\tau\leq\tau(f)+\tau(g)+b\dist[f;g],
\end{equation}
where the conditioning values in the distance are independent.
\end{lemma}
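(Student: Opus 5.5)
We follow the sketch in the technique overview. We apply the diagonal hypothesis $\mathsf D_{\tau^{[N]}}(b)$ to one law on $G^N$ built from the ensemble, and then let $N\to\infty$.

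\textbf{Construction.} Let $I$ be an index with $\Pr(I=i)=p_i$. Conditionally on $I=i$, let $Z=(Z_1,\ldots,Z_N)$ have independent coordinates, each with law $P_i$. Let $\mathbf Z$ denote this law on $G^N$, a mixture of the product laws $P_i^{\otimes N}$. Applying $\mathsf D_{\tau^{[N]}}(b)$ to $\mathbf Z$ and using $M_{\tau^{[N]}}=NM_\tau$ from Lemma~\ref{lem:penalty:chain} gives
\[
NM_\tau\leq2\tau^{[N]}(\mathbf Z)+b\,\dist[\mathbf Z;\mathbf Z].
\]
We bound the two terms on the right separately.

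\textbf{Penalty term.} Lemma~\ref{lem:penalty:chain} shows that $\tau^{[N]}$ is two-sided-admissible on $G^N$. Conditioning on $I$ therefore gives
\[
\tau^{[N]}(\mathbf Z)\leq\tau^{[N]}(\mathbf Z\mid I)+\tfrac12\II[\mathbf Z:I]\leq\sum_ip_i\tau^{[N]}(P_i^{\otimes N})+\tfrac12\HH[I].
\]
By Eq.~\eqref{eq:chain:product}, the sum equals $N\sum_ip_i\tau(P_i)$.

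\textbf{Distance term.} Let $\mathbf Z'$ be an independent copy of $\mathbf Z$ with index $I'$. Since $\HH[\mathbf Z-\mathbf Z']\leq\HH[\mathbf Z-\mathbf Z'\mid I,I']+\HH[I,I']$, we get
\[
\HH[\mathbf Z-\mathbf Z']\leq\sum_{i,j}p_ip_j\HH[P_i^{\otimes N}*P_j^{\otimes N}]+2\HH[I].
\]
Also $\HH[\mathbf Z]\geq\HH[\mathbf Z\mid I]=N\sum_ip_i\HH[P_i]$. In characteristic two, subtraction is addition, so the product structure gives $\HH[P_i^{\otimes N}*P_j^{\otimes N}]=N\HH[P_i*P_j]$. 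Combining these,
\[
\dist[\mathbf Z;\mathbf Z]\leq N\sum_{i,j}p_ip_j\bigl(\HH[P_i*P_j]-\tfrac12\HH[P_i]-\tfrac12\HH[P_j]\bigr)+2\HH[I]=N\sum_{i,j}p_ip_j\dist[P_i;P_j]+2\HH[I].
\]
The symmetrization $\sum_{i,j}p_ip_j\HH[P_i]=\sum_ip_i\HH[P_i]$ is what allows the diagonal $\HH[P_i]$ terms to be split symmetrically over the pair $(i,j)$.

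\textbf{Conclusion of Eq.~\eqref{eq:ensemble:inverse}.} Substituting both bounds and dividing by $N$ leaves error terms of size $(1+2b)\HH[I]/N$, which vanish as $N\to\infty$. This gives Eq.~\eqref{eq:ensemble:inverse}. The main point needing care is that the penalty and the distance both pick up only $O(\HH[I])$, independent of $N$. This uses the two-sided conditioning inequality (one direction only) for $\tau^{[N]}$, and subadditivity of entropy for the distance.

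\textbf{Proof of Eq.~\eqref{eq:ensemble:pair}.} Apply Eq.~\eqref{eq:ensemble:inverse} to the weighted union $\mathcal F_{1/2}$ of the two ensembles. By translation invariance, $\tau(P_s)=\tau(Q_t)$ on coupled pairs and $\dist$ is unchanged under translating either argument. Also, $\dist[P;P']$ depends only on the translation classes of $P$ and $P'$.

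Since the ensembles agree up to translation, their translation-class distributions coincide. Hence the self-distance averages $D(f,f)$, $D(g,g)$ and the cross average $D(f,g)$ are all equal, with independent index draws in each. The union then has penalty average $\tfrac12(\tau(f)+\tau(g))$ and distance average
\[
\tfrac14D(f,f)+\tfrac14D(g,g)+\tfrac12D(f,g)=\dist[f;g].
\]
Multiplying out yields Eq.~\eqref{eq:ensemble:pair}.
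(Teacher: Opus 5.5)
Your proof is correct and follows the paper's argument essentially step for step. Both use the same mixture of product laws $P_i^{\otimes N}$, reverse conditioning on the index to bound the penalty, the $2\HH[I]$ entropy overhead in the distance, and the limit $N\to\infty$. The only difference is cosmetic, in Eq.~\eqref{eq:ensemble:pair}: the paper applies Eq.~\eqref{eq:ensemble:inverse} to $f$ alone and then uses $\tau(f)=\tau(g)$ and $D(f,f)=D(f,g)$, while you pass through the weighted union $\mathcal F_{1/2}$; because you show that all three distance averages coincide, the two routes are equivalent.
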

\begin{proof}
Choose $I\in\{1,\ldots,m\}$ with probabilities $p_i$ and, conditional on $I=i$, give
$Z_N$ the product law $P_i^{\otimes N}$. One index is shared by all
coordinates. Set
\[
T:=\sum_{i=1}^m p_i\tau(P_i),\qquad
D:=\sum_{i=1}^m\sum_{j=1}^m p_ip_j\dist[P_i;P_j],\qquad h_I:=\HH[I].
\]
Lemma~\ref{lem:penalty:chain} gives $\tau^{[N]}(Z_N\mid I)=NT$.
Reverse conditioning and $\II[Z_N:I]\leq h_I$ therefore give
\begin{equation}\label{eq:ensemble:penalty}
\tau^{[N]}(Z_N)\leq NT+\tfrac12h_I.
\end{equation}

Take an independent copy $(J,Z_N')$ of $(I,Z_N)$.
Conditional on $(I,J)=(i,j)$, their sum has law
$(P_i*P_j)^{\otimes N}$.
Writing $h:=\sum_{i=1}^m p_i\HH[P_i]$, the entropy chain rule gives
\[
\HH[Z_N+Z_N']\leq N\sum_{i=1}^m\sum_{j=1}^m p_ip_j\HH[P_i*P_j]+2h_I,
\qquad \HH[Z_N]\geq Nh.
\]
Subtract the second bound from the first. Since the marginal entropy
terms in $D$ sum to $h$, this gives
\begin{equation}\label{eq:ensemble:distance}
\dist[Z_N;Z_N]\leq ND+2h_I.
\end{equation}
Lemma~\ref{lem:penalty:chain} also gives $M_{\tau^{[N]}}=NM_\tau$.
\begin{samepage}
We have
\[
NM_\tau
\leq2\tau^{[N]}(Z_N)+b\dist[Z_N;Z_N]
\leq2NT+h_I+b\dist[Z_N;Z_N]
\leq2NT+h_I+b(ND+2h_I).
\]
The first step is the diagonal hypothesis for $\tau^{[N]}$.
The second step uses Eq.~\eqref{eq:ensemble:penalty}.
The last step uses Eq.~\eqref{eq:ensemble:distance}.
\par
\end{samepage}
After division by $N$, the right side is
$2T+bD+(1+2b)h_I/N$. The ensemble is fixed and finite, so letting
$N$ tend to infinity proves Eq.~\eqref{eq:ensemble:inverse}.

For Eq.~\eqref{eq:ensemble:pair}, use the coupling in
Definition~\ref{def:posterior:ensemble}. Translation invariance gives
$\tau(f)=\tau(g)$. Two independent copies of that coupling identify
the averaged distances from $f$ to $f$ and from $f$ to $g$.
Apply Eq.~\eqref{eq:ensemble:inverse} to the ensemble $f$.
\end{proof}

\subsection{Entropy and penalty identities}
\label{sec:proofs:convolution}

Entropy submodularity applies to overlapping tuples through their common
linear functions.

\begin{lemma}[Common-function inequality]\label{lem:entropy:common:function}
Suppose $F$ is a deterministic function of each of the finite random
variables $P,Q$. Then
\begin{equation}\label{eq:entropy:common:function}
\HH[P]+\HH[Q]\geq\HH[F]+\HH[P,Q].
\end{equation}
\end{lemma}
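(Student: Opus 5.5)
Since $F$ is a deterministic function of $P$ and of $Q$, the pair $(P,Q)$ determines $F$. The plan is to rewrite everything in terms of entropies jointly with $F$ and then invoke nonnegativity of a conditional mutual information. First, because $F$ is a function of $P$, we have $\HH[P]=\HH[P,F]$. Likewise $\HH[Q]=\HH[Q,F]$ and $\HH[P,Q]=\HH[P,Q,F]$.

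By the chain rule, $\HH[P,F]=\HH[F]+\HH[P\mid F]$, $\HH[Q,F]=\HH[F]+\HH[Q\mid F]$, and $\HH[P,Q,F]=\HH[F]+\HH[P,Q\mid F]$. Substituting, the claimed inequality \eqref{eq:entropy:common:function} becomes
\[
\HH[P\mid F]+\HH[Q\mid F]-\HH[P,Q\mid F]\geq0 .
\]
By the definition of conditional mutual information in Section~\ref{sec:preli:notation}, the left side is exactly $\II[P:Q\mid F]$.

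This quantity is nonnegative, since it is an average over values $f$ of $F$ of ordinary mutual informations $\II[P:Q\mid F=f]$, each of which is a Kullback--Leibler divergence and hence nonnegative. That proves the lemma.

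No step is a real obstacle; the only care needed is to note that the variables are finite, so all entropies are finite and the subtractions are legitimate.
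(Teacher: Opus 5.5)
Your proposal is correct and follows the paper's proof: the paper also subtracts $2\HH[F]$ to reduce the inequality to $\HH[P\mid F]+\HH[Q\mid F]\geq\HH[P,Q\mid F]$ and applies conditional subadditivity given $F$. Your version simply writes out the chain-rule steps explicitly and names the difference as $\II[P:Q\mid F]\geq0$.
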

\begin{proof}
Subtract $2\HH[F]$ and apply conditional subadditivity given $F$.
\end{proof}

The entropy term in $R_\mu$ cancels the admissibility charge for conditioning.

\begin{lemma}[Conditional-state rules]\label{lem:copy:rules}
Suppose $\tau$ is two-sided-admissible.
Under Definition~\ref{def:copy:state}, the following assertions hold.
\begin{enumerate}
\item For $f=\mu^{*k}$, $k\geq1$,
$R_\mu(f)\leq2(\HH[\mu^{*k}]-t_\mu)$.
\item If further conditioning of $g$ produces $f$, up to a translate
determined by the conditioning values, then $R_\mu(f)\leq R_\mu(g)$.
\item If $f=g*h$ on disjoint supports, with independently drawn
conditioning values, then
$R_\mu(f)\leq R_\mu(g)+2(\HH[f]-\HH[g])$.
Either factor may be chosen as $g$.
\item An independent conditioning component may be removed if its
output contribution is determined by that component. For the resulting
state $g$, $R_\mu(f)=R_\mu(g)$.
\item For $f=(W\subset U)$ and $v\in U\setminus W$,
$R_\mu(f)\leq2e_\mu(\operatorname{span}(v))$.
\item If further conditioning of $g$ produces $f$, up to a known
conditional translate, then
\begin{equation}\label{eq:copy:reverse:conditioning}
R_\mu(f)-R_\mu(g)+2(\HH[g]-\HH[f])\geq0.
\end{equation}
\end{enumerate}
In the second and sixth assertions, every subspace $W_0\subseteq W$
is allowed as earlier conditioning: for $v\in U\setminus W$, take
$f:=Z_v\mid Z_W$ and $g:=Z_v\mid Z_{W_0}$.
Moreover,
\begin{equation}\label{eq:copy:state:entropy}
\HH[f]-t_\mu=e_\mu(U_f)-e_\mu(W_f).
\end{equation}
\end{lemma}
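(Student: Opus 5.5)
The plan is to reduce every assertion to Lemma~\ref{lem:penalty:core} by rewriting $R_\mu$ in terms of $\Phi_+=\HH+2\tau$. By definition,
\[
R_\mu(f)=\Phi_+(f)-2\tau(\mu)-t_\mu .
\]
So differences of $R_\mu$ are differences of $\Phi_+$, and the offset $-2\tau(\mu)-t_\mu=-\Phi_+(\mu)+t_\mu$ is a constant.

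First I prove Eq.~\eqref{eq:copy:state:entropy}. The state $f=(W\subset U)$ represents $Z_v\mid Z_W$, and $Z_U$ is in bijection with $(Z_W,Z_v)$. Hence
\[
\HH[f]=\HH[Z_U]-\HH[Z_W]=e_\mu(U)-e_\mu(W)+(\dim U-\dim W)t_\mu ,
\]
and $\dim U-\dim W=1$ gives the identity.

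The comparison rules then follow directly.
\begin{itemize}
\item Assertion~2 is the left inequality of Eq.~\eqref{eq:core:conditioning}: $\Phi_+(f)\leq\Phi_+(g)$.
\item Assertion~6 is the right inequality of the same equation, rearranged.
\item Assertion~3 is the right inequality of Eq.~\eqref{eq:core:convolution}. It is symmetric in the two factors, so either may be taken as $g$.
\item Assertion~4 holds because an independent conditioning component whose output contribution it determines only shifts the output by a known translate, conditionally on the other values. Translation invariance of $\tau$ and $\HH$, averaged over the independent component, gives equality of $\Phi_+$.
\item The remark on $W_0\subseteq W$ holds because $Z_{W_0}$ is a function of $Z_W$, so $Z_v\mid Z_W$ is a further conditioning of $Z_v\mid Z_{W_0}$.
\end{itemize}

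For Assertion~1, I iterate Assertion~3. Start from $g=\mu$, for which $R_\mu(\mu)=0$. Convolving with fresh independent copies on disjoint supports gives
\[
R_\mu(\mu^{*k})\leq 0+2(\HH[\mu^{*k}]-\HH[\mu]).
\]
Equivalently, apply Eq.~\eqref{eq:core:convolution} once with $h=\mu^{*(k-1)}$ as the unconditioned state.

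For Assertion~5, fix $v\in U\setminus W$. The state $f=Z_v\mid Z_W$ is a further conditioning of the unconditioned state $g=Z_v$ (take $W_0=0$). Assertion~2 gives $R_\mu(f)\leq R_\mu(Z_v)$. If $v$ has weight $k\geq1$, then $Z_v\sim\mu^{*k}$, and Assertion~1 gives
\[
R_\mu(Z_v)\leq2(\HH[Z_v]-t_\mu)=2e_\mu(\operatorname{span}(v)).
\]

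The main obstacle is bookkeeping rather than analysis. Every inequality must be applied pointwise in the earlier conditioning values and then averaged, and for convolution the two factors' conditioning values must be genuinely independent. I handle this exactly as in the proof of Lemma~\ref{lem:penalty:core}: fix the outer values, apply the rule, and average, using linearity of the averaged $\HH$ and $\tau$.
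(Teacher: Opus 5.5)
Your proposal is correct and follows the paper's proof essentially step for step. You rewrite $R_\mu(f)=\Phi_+(f)-\Phi_+(\mu)$, read assertions 2, 3 and 6 off Eqs.~\eqref{eq:core:conditioning} and~\eqref{eq:core:convolution}, obtain assertion 1 by convolving one copy of $\mu$ with the remaining sum, and obtain assertion 5 by combining 1 and 2; there is one harmless slip, namely that the constant offset is $-2\tau(\mu)-t_\mu=-\Phi_+(\mu)$ rather than $-\Phi_+(\mu)+t_\mu$, but only its constancy is used.
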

\begin{proof}
The entropy chain rule gives Eq.~\eqref{eq:copy:state:entropy}.
With the notation of Lemma~\ref{lem:penalty:core},
Definition~\ref{def:copy:state} gives
\[
R_\mu(f)=\Phi_+(f)-\Phi_+(\mu).
\]
For the first assertion, apply Eq.~\eqref{eq:core:convolution} to
one copy of $\mu$ and the sum of the remaining copies; for $k=1$
both sides vanish.
For the second and sixth assertions, apply
Eq.~\eqref{eq:core:conditioning}.
For the third assertion, apply Eq.~\eqref{eq:core:convolution}.
The fourth assertion follows from independence and translation invariance.
For the fifth assertion, apply the first assertion to $Z_v$ and then
the second assertion to condition on $Z_W$.
Each comparison is first made at fixed conditioning values and then
averaged. Since $Z_W$ determines $Z_{W_0}$ for every $W_0\subseteq W$,
the second and sixth assertions allow every stated earlier conditioning.
\end{proof}

Copy permutations identify the posterior ensembles used by the inverse
costs while preserving their probability weights.

\begin{lemma}[Matching posterior ensembles]\label{lem:ensemble:flags}
Suppose $f,g$ are conditional linear states on disjoint supports of
independent input copies. Remove independent irrelevant conditioning
components as in Lemma~\ref{lem:copy:rules}.
If a permutation of the remaining copies maps the conditioning and output
spaces of $f$ to those of $g$ and preserves each input law, their posterior
ensembles agree up to translation.
For the general outer copies, the permutation must preserve the $X$
and $Y$ coordinates separately.
\end{lemma}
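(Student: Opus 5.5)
The plan is to construct the coupling demanded by Definition~\ref{def:posterior:ensemble} directly from the permutation. Write the input copies as $Z_1,\ldots,Z_N$, independent, and let $\sigma$ be the given permutation of the copies that remain after the irrelevant conditioning components are removed. Extend $\sigma$ to a linear map $P_\sigma$ on coefficient rows in $\F_2^N$ by permuting coordinates. By hypothesis, $P_\sigma(W_f)=W_g$ and $P_\sigma(U_f)=U_g$; in the general outer setting, $\sigma$ keeps $X$-coordinates among $X$-coordinates and $Y$-coordinates among $Y$-coordinates.

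\emph{Step 1: reduction to permuted states.} By the fourth assertion of Lemma~\ref{lem:copy:rules}, removing an independent conditioning component whose output contribution it determines changes the output only by a known translate. It also changes the conditioning law only by an independent factor. Hence the posterior ensemble of the original state agrees up to translation with that of the reduced state: we couple the extra independent index with itself, or simply marginalise it. Since agreement up to translation composes (compose the couplings), it suffices to treat the reduced states.

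\emph{Step 2: the permutation coupling.} Set $Z'_i:=Z_{\sigma(i)}$. Because $\sigma$ preserves each input law and the $Z_i$ are independent, $(Z'_1,\ldots,Z'_N)$ has the same joint law as $(Z_1,\ldots,Z_N)$. Choose a basis $w_1,\ldots,w_r$ of $W_f$ and a vector $v\in U_f\setminus W_f$. Then the pair $(Z_{W_f},Z_v)$ computed from $Z'$ equals $(Z_{P_\sigma W_f},Z_{P_\sigma v})$ computed from $Z$, where $P_\sigma v\in U_g\setminus W_g$. Consequently the joint law of (conditioning value, output) for $f$ coincides with that for the state $Z_{P_\sigma v}\mid Z_{P_\sigma W_f}$, which represents $g$. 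The only freedom left is the choice of basis of $W_g$ and of output vector $v'$ in $U_g\setminus W_g$. A change of basis of the conditioning space is a bijection of conditioning values, so the index coupling is that bijection. Changing $v'$ to $v'+w$ with $w\in W_g$ translates each posterior by the known value $Z_w$. Thus at each coupled pair of indices, $P_s$ is a translate of $Q_t$, and the index marginals are $p_s$ and $q_t$ because the coupling is law-preserving.

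\emph{Step 3: bookkeeping.} The main obstacle is this bookkeeping rather than any inequality. One must check that ``representing $Z_v\mid Z_W$'' is well defined up to translation for every choice of $v$ and basis, and that disjoint supports and independent conditioning values survive the permutation, so that ensemble distances use product weights. The paper's conventions do not fully spell out either point. I would handle the first via the bijection and translate argument above. For the second, I would note that $\sigma$ acts only on the copies supporting the given state, so independence from states on other supports is unaffected.
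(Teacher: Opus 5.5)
Your proposal is correct and follows essentially the same route as the paper's proof. The law-preserving permutation gives a probability-preserving correspondence of conditioning values, up to an invertible change of basis. The mapped output differs from $g$'s output by a row of $W_g$, since $\dim(U_g/W_g)=1$, so it is a known translate at each conditioning value. The removed independent components likewise contribute only known translates.

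One small point in Step~3 should be tidied. The independence of the actual conditioning draws of $f$ and $g$ follows directly from their disjoint supports and does not depend on $\sigma$. Also, $\sigma$ maps the support of $f$ onto that of $g$, so it does not act only on one state's copies.
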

\begin{proof}
The permutation preserves the joint law of the independent inputs.
It takes a basis of the first conditioning space to a basis of the
second, up to an invertible change of coordinates. Thus it gives a
probability-preserving correspondence of conditioning values.
The mapped output differs from the second output by a row of its
conditioning space, since that space has codimension one in the
output space. This difference is a known translate at each conditioning
value. The correspondence therefore supplies the coupling in
Definition~\ref{def:posterior:ensemble}.
The removed components are independent of the retained copies and
contribute only known output translates, so restoring them preserves
the agreement of ensembles. Disjoint supports make the two actual
conditioning draws independent.
In the general outer setting, preservation of each input law requires
the stated separate preservation of the two coordinate classes.
\end{proof}

Two distinct scalar coordinates describe the entropy budget of one branch.

\begin{lemma}[Branch normalization]\label{lem:copy:normalization}
Under Definition~\ref{def:cyclic:copies},
\begin{align}
m&=s/2,\qquad e=s-x,\label{eq:copy:me}\\
\mathscr U(C)&=e,\qquad
\mathscr U(Z_i)=\mathscr U(C+Z_i)=m,\label{eq:copy:singletons}\\
\mathscr U(C,Z_W)&=e+\mathscr E(W).\label{eq:copy:with:C}
\end{align}
The functional $\mathscr E$ is additive on disjoint copy supports and
vanishes on one copy. Both entropy functionals are invariant under row
operations and copy permutations. The functional $\mathscr U$ is also
invariant under simultaneous substitution $Z_i\mapsto Z_i+C$.
For row spaces $P,Q$ on $(C,Z)$,
\begin{equation}\label{eq:copy:shannon}
\mathscr U(P)+\mathscr U(Q)-\mathscr U(P\cap Q)-\mathscr U(P+Q)\geq0.
\end{equation}
For row spaces $P,Q$ on the copies alone,
\begin{equation}\label{eq:copy:conditional:shannon}
\mathscr E(P)+\mathscr E(Q)-\mathscr E(P\cap Q)-\mathscr E(P+Q)\geq0.
\end{equation}
If a row space $P$ on $(C,Z)$ does not contain the coefficient row of
$C$, let $P^\circ$ be its projection onto the copy coordinates. Then
\begin{equation}\label{eq:copy:entropy:conditioning}
\mathscr U(P)-\mathscr E(P^\circ)\geq0.
\end{equation}
The two orientations have the same averaged conditional-state entropies,
penalties, and inverse costs.
\end{lemma}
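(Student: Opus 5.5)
The plan is to reduce every assertion to elementary Shannon identities for the cyclic law $A+B+C=0$ in characteristic two, together with the fact that the copies $Z_i$ (respectively $\widetilde Z_i$) are conditionally i.i.d.\ given $C$.

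\textbf{Scalar identities.} Since any two of $A,B,C$ determine the third, $\HH[A,C]=\HH[B,C]=\HH[A,B]=\HH[A,B,C]$. This gives $\HH[A\mid C]=\HH[B\mid C]=t$, which justifies the definition. It also gives
\[
\II[A:C]=\HH[A]-t,\qquad \II[B:C]=\HH[B]-t,
\]
so $s=\HH[A]+\HH[B]-2t=2m$. Next, $x=\HH[A]+\HH[B]-\HH[A,B]=\HH[A]+\HH[B]-\HH[C]-t$, hence $x=2m+t-\HH[C]=s-e$, which is Eq.~\eqref{eq:copy:me}.

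For Eq.~\eqref{eq:copy:singletons}, first $\mathscr U(C)=\HH[C]-t=e$. The pair $(C,Z_i)$ has the joint law of $(C,A)$ in the first orientation and of $(C,B)$ in the second. Hence $Z_i$ has law $A$ and $C+Z_i$ has law $B$, with the roles reversed in the other orientation. Averaging the two orientations gives $\tfrac12(\HH[A]+\HH[B])-t=m$ in both cases.

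For Eq.~\eqref{eq:copy:with:C}, use $\HH[C,Z_W]=\HH[C]+\HH[Z_W\mid C]$ in each orientation and average. The orientation-averaged conditional entropy is what $\mathscr E$ denotes; I will read $\mathscr E$ as that average, which is harmless given the final symmetry claim. Subtracting $(1+\dim W)t$ gives $e+\mathscr E(W)$.

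\textbf{Structural properties.} Additivity of $\mathscr E$ on disjoint supports holds because copies on disjoint supports are conditionally independent given $C$, so conditional entropies add; dimensions add as well. $\mathscr E$ vanishes on one copy because $\HH[Z_i\mid C]=t$.

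Invariance under row operations holds because an invertible change of basis of a row space gives a tuple in bijection with the original. Invariance under copy permutations holds because the copies are exchangeable given $C$. Invariance of $\mathscr U$ under $Z_i\mapsto Z_i+C$ holds because this substitution interchanges the two orientations: $Z_i+C$ given $C=c$ has the law of $B$ given $C=c$. The averaged $\mathscr U$ is therefore unchanged. This is the one place where averaging over both orientations is essential.

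\textbf{Submodularity.} For Eqs.~\eqref{eq:copy:shannon} and~\eqref{eq:copy:conditional:shannon}, apply Lemma~\ref{lem:entropy:common:function} with $P$ and $Q$ the tuples of linear forms and $F$ the tuple for $P\cap Q$. The joint tuple $(P,Q)$ generates $P+Q$. The dimension terms cancel because $\dim P+\dim Q=\dim(P\cap Q)+\dim(P+Q)$. In the $\mathscr E$ case, apply the lemma conditionally on $C=c$ and average.

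For Eq.~\eqref{eq:copy:entropy:conditioning}, the row space $P$ does not contain the row of $C$. Hence the projection $P\to P^\circ$ is injective, so $\dim P=\dim P^\circ$. Each row of $P$ has the form $p^\circ$ or $p^\circ+C$. Conditioned on $C$, the tuple $P(C,Z)$ is a known translate of $Z_{P^\circ}$. Therefore
\[
\HH[P(C,Z)]\geq\HH[P(C,Z)\mid C]=\HH[Z_{P^\circ}\mid C],
\]
and averaging over orientations and subtracting $\dim(P)\,t$ gives the claim.

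\textbf{Orientation symmetry and main obstacle.} The final symmetry statement is where I expect the only real care to be needed. Map each $Z_i$ to $Z_i+C=\widetilde Z_i$. For fixed $c$, the laws $\mu_c$ and $\operatorname{Law}(B\mid C=c)$ are translates by $c$. So every conditional state of copies under one orientation is a known translate of the corresponding state under the other, since sums of $k$ copies shift by $kc$ and conditioning tuples shift by known vectors. Translation invariance of $\tau$ and of entropy, Lemma~\ref{lem:ensemble:flags}, and Lemma~\ref{lem:inverse:ensemble} then give equal entropies, penalties, and inverse costs, after averaging over $c$.
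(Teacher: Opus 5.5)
Your proposal is correct and follows the same route as the paper. The shared steps are:
\begin{itemize}
\item the scalar identities from the fact that any two of $A,B,C$ determine the third;
\item the reflection $Z_i\mapsto Z_i+C$ interchanging the two orientations, which gives both the $\mathscr U$-symmetry and, at fixed $C=c$, equality of conditional entropies, penalties and inverse costs;
\item Lemma~\ref{lem:entropy:common:function} with cancelling rank terms for the two submodularity inequalities;
\item injectivity of $P\to P^\circ$ plus conditioning on $C$ for Eq.~\eqref{eq:copy:entropy:conditioning}.
\end{itemize}
Your explicit remark that Eq.~\eqref{eq:copy:with:C} needs $\HH[\widetilde Z_W\mid C]=\HH[Z_W\mid C]$, supplied by that translation argument, is a point the paper leaves implicit. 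Your appeal to Lemma~\ref{lem:inverse:ensemble} is unnecessary, since equality of the costs follows directly from translation invariance.
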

\begin{proof}
Every pair of $A,B,C$ determines the third. If $h:=\HH[A,B]$, then
$t=h-\HH[C]$,
$s=\HH[A]+\HH[B]+2\HH[C]-2h$, and
$x=\HH[A]+\HH[B]-h$. These identities prove Eq.~\eqref{eq:copy:me}.
The unconditional copy marginals are $A$ and $B$ in the two orientations.
Adding $C$ interchanges them. This proves Eq.~\eqref{eq:copy:singletons}.
The chain rule proves Eq.~\eqref{eq:copy:with:C}.

Conditional independence gives additivity of $\mathscr E$, and one copy
has conditional entropy $t$. Row operations preserve each tuple, and
copy permutations preserve its conditional law. Since $B=A+C$,
simultaneous translation of the copies by $C$ interchanges the two
orientations, proving the additional symmetry of $\mathscr U$.
At each fixed $C=c$, this translation changes a state only by deterministic
translations of its conditioning tuple and output. It therefore preserves
the averaged conditional quantities and inverse costs.
Finally apply Lemma~\ref{lem:entropy:common:function} to the tuples with
spaces $P,Q$ in each orientation. The dimension identity for intersection
and sum cancels every rank term and gives Eq.~\eqref{eq:copy:shannon}.
Applying the same argument after fixing $C=c$ and then averaging proves
Eq.~\eqref{eq:copy:conditional:shannon}.
For Eq.~\eqref{eq:copy:entropy:conditioning}, the projection from $P$
to $P^\circ$ is injective, so their dimensions agree.
Conditioning on $C$ decreases each orientation's entropy.
At a fixed value of $C$, the tuple represented by $P$ is a translate
of the tuple represented by $P^\circ$.
The two orientations have the same averaged conditional entropy,
and the equal rank terms cancel.
\end{proof}

Two conditionally independent copies give a finite initial bound for
every branch inequality. This supplies a uniform initial scale for
the simultaneous improvements.

\begin{lemma}[Finite initial branch bound]\label{lem:branch:initial}
Suppose $\tau$ is admissible and $\mathsf G_\tau(\kappa_0)$ holds for
some finite $\kappa_0\geq0$.
Under Definition~\ref{def:cyclic:copies}, set
$p_2:=\HH[Z_1+Z_2\mid C]-t$. Then
\begin{align}
0\leq p_2&\leq x+s,\label{eq:branch:initial:entropy}\\
\eta(M_\tau-2\tau(A\mid C))
&\leq\kappa_0\eta(x+s).\label{eq:branch:initial:inverse}
\end{align}
\end{lemma}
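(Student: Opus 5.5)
Both inequalities follow by applying $\mathsf G_\tau(\kappa_0)$ to the two conditionally independent copies $Z_1,Z_2$ at a fixed value $C=c$ and then averaging over $c$. The constraint $A+B+C=0$ is used only to evaluate $\E_c\dist[\mu_c;\mu_c]$ in terms of $x$ and $s$.

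For Eq.~\eqref{eq:branch:initial:entropy}, the lower bound is immediate. At each fixed $c$, the variables $Z_1,Z_2$ are independent with law $\mu_c$, so $\HH[Z_1+Z_2\mid C=c]\geq\HH[Z_1\mid C=c]$. Averaging gives $p_2\geq0$, since $\HH[Z_1\mid C]=t$.

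For the upper bound we compare with the unconditioned copies. Conditionally on $C$, the pair $(Z_1,Z_2)$ has the same law as $(A,A')$, where $A'$ is a conditionally independent copy of $A$ given $C$. Conditionally on $C$, the variable $A'$ has the law of $B+C=A$, so we may use $B$-side information. Conditioning reduces entropy, so
\[
\HH[Z_1+Z_2\mid C]\leq \HH[Z_1+Z_2],
\]
but this discards too much. Instead I would write $Z_1+Z_2=(Z_1+C)+(Z_2+C)$. Given $C$, the variable $Z_i+C$ is distributed as $B$ given $C$, because $A+C=B$. So $Z_1+Z_2$ given $C$ is a difference of two conditionally independent copies of $B\mid C$. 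The standard entropic estimate for a conditionally independent trio,
\[
\HH[Z_1+Z_2\mid C]\leq \HH[A\mid C]+\HH[B\mid C]+\ldots,
\]
is then used through the submodularity inequality Eq.~\eqref{eq:entropy:common:function}. Concretely, apply Lemma~\ref{lem:entropy:common:function} to the tuples $P=(Z_1,Z_1+Z_2\ldots)$ in the form used by Ruzsa's entropic triangle inequality. The target bound is
\[
\HH[Z_1+Z_2\mid C]\leq\HH[A]+\HH[B]-\HH[C]+\HH[C]-t\ \text{terms}.
\]
In normalized form this should become $p_2\leq 2m+e=s+(s-x)$ or sharper, using Eq.~\eqref{eq:copy:me}. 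The stated bound $x+s$ equals $2m+e+\ldots$ after simplification, since $e=s-x$ gives $x+s=2s-e$. So the natural intermediate target is
\[
p_2\leq \mathscr U(Z_1)+\mathscr U(Z_2)-\ldots,
\]
derived from Eq.~\eqref{eq:copy:entropy:conditioning} and Eq.~\eqref{eq:copy:shannon} applied to the row spaces $\operatorname{span}(C,Z_1)$ and $\operatorname{span}(C,Z_2)$. Their intersection is $\operatorname{span}(C)$ and their sum contains $Z_1+Z_2$. The resulting bound is
\[
\mathscr U(C,Z_1,Z_2)\leq 2\mathscr U(C,Z_1)-\mathscr U(C)=2(e+0)-e=e
\]
by Eq.~\eqref{eq:copy:with:C}. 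This alone only controls the joint entropy, so the estimate on the sum must come from the $\mathscr U$ symmetry: project $\operatorname{span}(Z_1+Z_2)$ and compare with $\mathscr U(Z_1)+\mathscr U(Z_2)=2m=s$, plus a defect bounded by $x$. I expect this bookkeeping, i.e.\ identifying precisely which submodularity instance yields exactly $x+s$, to be the main obstacle. First I would try Eq.~\eqref{eq:copy:shannon} with $P=\operatorname{span}(Z_1,Z_2)$ and $Q=\operatorname{span}(C,Z_1+Z_2)$, then use Eq.~\eqref{eq:copy:entropy:conditioning} to pass from $\mathscr U$ to $\mathscr E$.

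For Eq.~\eqref{eq:branch:initial:inverse}, fix $c$ and apply $\mathsf G_\tau(\kappa_0)$ with $X=Y=\mu_c$, giving
\[
M_\tau\leq 2\tau(\mu_c)+\kappa_0\dist[\mu_c;\mu_c].
\]
By definition, $\dist[\mu_c;\mu_c]=\HH[Z_1+Z_2\mid C=c]-\HH[\mu_c]$. Averaging over $c$ gives
\[
M_\tau-2\tau(A\mid C)\leq\kappa_0 p_2\leq\kappa_0(x+s),
\]
and multiplying by $\eta>0$ yields the claim.
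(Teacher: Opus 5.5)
Your lower bound $p_2\geq0$ is correct, and so is your derivation of Eq.~\eqref{eq:branch:initial:inverse} from $p_2\leq x+s$ (apply $\mathsf G_\tau(\kappa_0)$ to two copies of $\mu_c$, average over $c$, and use $\kappa_0\geq0$); both match the paper. The gap is the upper bound $p_2\leq x+s$, which is the real content of the lemma. You do not prove it, and you leave open which submodularity instance gives it. The instances you do propose fail:
\begin{itemize}
\item With $\operatorname{span}(C,Z_1)$ and $\operatorname{span}(C,Z_2)$ you only recover the identity $\mathscr U(C,Z_1,Z_2)=e$.
\item With $P=\operatorname{span}(Z_1,Z_2)$ and $Q=\operatorname{span}(C,Z_1+Z_2)$, Eq.~\eqref{eq:copy:with:C} gives $\mathscr U(Q)=e+p_2$. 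So $p_2$ sits on the larger side and cancels, and the inequality only bounds $\II[Z_1+Z_2:C]$ by $\mathscr U(P)$.
\end{itemize}
Your intermediate target $p_2\leq 2m+e=2s-x$ is also false in general. Take $A=B$ uniform on a two-element subgroup and $C=0$. Then $x=1$, $s=0$ and $p_2=0$, but $2s-x=-1$. The correct target is $p_2+e\leq 4m=2s$, which is equivalent to $p_2\leq x+s$ because $e=s-x$.

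The missing idea is to pair $P=(Z_1,Z_2)$ with its reflection $Q=(C+Z_1,C+Z_2)$. Both determine $Z_1+Z_2$. Together they determine $(C,Z_1,Z_2)$, whose entropy is $\HH[C]+2t$ by conditional independence. The coordinates of $P$ have law $A$ and those of $Q$ have law $B$. Combine Lemma~\ref{lem:entropy:common:function}, subadditivity, the bound $\HH[Z_1+Z_2]\geq\HH[Z_1+Z_2\mid C]=p_2+t$, and $\HH[A]+\HH[B]=\HH[A,B]+x=\HH[C]+t+x$. This gives
\[
p_2+t+\HH[C]+2t\leq\HH[P]+\HH[Q]\leq2\HH[A]+2\HH[B]=2(\HH[C]+t+x),
\]
so $p_2\leq\HH[C]-t+2x=e+2x=x+s$. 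In your $\mathscr U$ notation, this is Eq.~\eqref{eq:copy:shannon} applied to $\operatorname{span}(Z_1,Z_2)$ and $\operatorname{span}(C+Z_1,C+Z_2)$. Their intersection is $\operatorname{span}(Z_1+Z_2)$ and their sum is $\operatorname{span}(C,Z_1,Z_2)$, and each side is at most $2m=s$.

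You had the key identity $Z_1+Z_2=(Z_1+C)+(Z_2+C)$ but did not use the reflected pair as the second tuple. The step $\HH[Z_1+Z_2\mid C]\leq\HH[Z_1+Z_2]$, which you dismissed as discarding too much, is exactly the step needed here.
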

\begin{proof}
Set $c:=\HH[C]$, $P:=(Z_1,Z_2)$, and $Q:=(C+Z_1,C+Z_2)$.
The sum $Z_1+Z_2$ is a deterministic function of each of $P,Q$.
Their joint tuple determines and is determined by $(C,Z_1,Z_2)$,
which has entropy $c+2t$ by conditional independence.
Each coordinate of $P$ has marginal law $A$, and each coordinate
of $Q$ has marginal law $B$.
\begin{samepage}
We have
\[
p_2+c+3t\leq\HH[Z_1+Z_2]+\HH[P,Q]\leq\HH[P]+\HH[Q]\leq2\HH[A]+2\HH[B].
\]
The first step follows from the definition of $p_2$, conditioning
decreasing entropy, and $\HH[P,Q]=c+2t$.
The second step follows from Lemma~\ref{lem:entropy:common:function}.
The last step follows from subadditivity.
\par
\end{samepage}
The definition of $x$ gives $\HH[A]+\HH[B]=c+t+x$.
Lemma~\ref{lem:copy:normalization} gives $c-t=s-x$.
Substituting these identities into the preceding bound gives
$p_2\leq c-t+2x=x+s$.
Nonnegativity follows because independent convolution does not decrease
entropy at each fixed $C$. This proves Eq.~\eqref{eq:branch:initial:entropy}.

\begin{samepage}
We have
\[
\eta(M_\tau-2\tau(A\mid C))
\leq\kappa_0\eta p_2\leq\kappa_0\eta(x+s).
\]
The first step follows by applying $\mathsf G_\tau(\kappa_0)$ to two
copies of $A\mid C=c$ at every conditioning value of positive
probability and averaging. Their averaged self-distance is $p_2$.
The last step follows from Eq.~\eqref{eq:branch:initial:entropy}.
\par
\end{samepage}
This proves Eq.~\eqref{eq:branch:initial:inverse}.
\end{proof}

\subsection{Cyclic quotients and outer copies}
\label{sec:proofs:certificate}

A cyclic inequality applies to every conditional quotient with the
selected posterior. The same substitution covers both inner configurations
and the outer copies.

\begin{lemma}[Cyclic quotient substitution]\label{lem:cyclic:substitution}
Suppose $\tau$ is two-sided-admissible and $a,b\geq0$ satisfy
\[
M_\tau-2\tau(A\mid C)\leq ax+bs
\quad\text{for every law $A+B+C=0$ on $G$}.
\]
For either configuration in Definition~\ref{def:inner:quotient},
the quantities $x_i,s_i$ are nonnegative and
\begin{equation}\label{eq:cyclic:substitution:inner}
M_\tau-2\tau(A\mid C)
\leq ax_i+bs_i+\mathscr R_f-\mathfrak h_f.
\end{equation}
For a quotient in Definition~\ref{def:outer:quotient}, they are
nonnegative and
\begin{equation}\label{eq:cyclic:substitution:outer}
M_\tau-\tauzero\leq ax_i+bs_i+\pi_i.
\end{equation}
Both statements hold under additional finite conditioning.
\end{lemma}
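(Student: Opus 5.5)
The plan is to realize each quotient as an honest cyclic law $A'+B'+C'=0$, possibly after finite conditioning. We then apply the hypothesis to that law, and finally convert its penalty term $2\tau(A'\mid C')$ into the state penalty through the conditional-state rules of Lemma~\ref{lem:copy:rules}.

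\textbf{Step 1: building the cyclic law.} Take a two-dimensional quotient $T/K$ with intermediate spaces $W_1,W_2,W_3$. Choose representatives $v_j\in W_j\setminus K$ with $v_1+v_2+v_3\in K$. Condition on $Z_K$; in the inner configurations this is done after fixing $C$ in configuration 1 but not in configuration 2, since there $C\notin K$. Setting $A'=Z_{v_j}$, $B'=Z_{v_k}$, $C'=Z_{v_i}$ gives a cyclic law up to a known translate at each conditioning value, and translation invariance removes that translate. In this conditional law:
\begin{itemize}
\item $\HH[A'\mid Z_K]=\mathscr F(W_j)-\mathscr F(K)$ plus the rank term $t$, and similarly for $B'$ and $C'$;
\item $\HH[A',B'\mid Z_K]=\mathscr F(T)-\mathscr F(K)+2t$.
\end{itemize}
The rank terms cancel in the formulas $x=\HH[A]+\HH[B]-\HH[A,B]$ and $s=\HH[A]+\HH[B]+2\HH[C]-2\HH[A,B]$ from the proof of Lemma~\ref{lem:copy:normalization}. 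This gives exactly $x_i$ and $s_i$. Nonnegativity then follows from submodularity, Eq.~\eqref{eq:copy:shannon} or~\eqref{eq:copy:conditional:shannon}, applied to the pairs $(W_j,W_k)$ and $(W_i,W_j)$, $(W_i,W_k)$, whose intersection is $K$ and sum is $T$. For the outer quotient, $\mathcal E$ satisfies the same submodularity by Lemma~\ref{lem:entropy:common:function}. Conditional application of the hypothesis, averaged over $Z_K$, gives
\[
M_\tau-2\tau(Z_{v_j}\mid Z_{W_i})\leq ax_i+bs_i,
\]
because conditioning on $(Z_K,C')$ is the same as conditioning on $Z_{W_i}$.

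\textbf{Step 2: relating the penalties.} The state $Z_{v_j}\mid Z_{W_i}$ is the flag $(W_i\subset T)$. In the outer case we have $2\tau(f_i)=\mathcal R(f_i)+\tauzero-\HH[f_i]+t$ and $\HH[f_i]-t=\mathcal E(U)-\mathcal E(W_i)$. Hence
\[
M_\tau-\tauzero\leq ax_i+bs_i+\mathcal R(f_i)-\mathcal E(U)+\mathcal E(W_i)=ax_i+bs_i+\pi_i,
\]
with the $\mathrm{sym}$ average handled by linearity.

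For the inner configurations, write $2\tau(\text{state})=2\tau(A\mid C)+\mathscr R-(\text{entropy}-t)$, using $\tau(\mu_c)$ averaged over $c$ equal to $\tau(A\mid C)$. The target state must be identified with the selected $f$:
\begin{itemize}
\item In configuration 1, $T=U_f$ and $W_i=W_f$, so the state is $f$ up to a translate. Its entropy excess is $\mathscr E(U_f)-\mathscr E(W_f)=\mathfrak h_f$ by Eq.~\eqref{eq:copy:state:entropy}.
\item In configuration 2, $W_i=\operatorname{span}(C,W_f)$, so conditioning on $Z_{W_i}$ means fixing $C$ and $Z_{W_f}$. The output $Z_{v_j}$ differs from the output of $f$ by a multiple of $C$ plus an element of $W_f$, which is a known translate. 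The entropy excess is again $\mathscr U(T)-\mathscr U(W_i)=\mathscr E(U_f)-\mathscr E(W_f)$ by Eq.~\eqref{eq:copy:with:C}.
\end{itemize}
Substituting gives Eq.~\eqref{eq:cyclic:substitution:inner}.

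\textbf{Main obstacle.} Step 2 in configuration 2 is the delicate part. The cyclic law there is unconditioned in $C$, while $\mathscr R_f$ is averaged over the laws $\mu_c$. We must check that both orientations and the mixing over $c$ are correctly accounted for, using the orientation symmetry in Lemma~\ref{lem:copy:normalization}. We must also check that the rank bookkeeping in $\mathscr U$ matches the conditional entropies exactly rather than only as an inequality.

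Additional finite conditioning is handled by applying everything at each conditioning value and averaging, since all relations used are linear.
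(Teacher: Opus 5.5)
Your proposal is correct and follows essentially the same route as the paper: you realize the quotient as a cyclic law conditioned on $Z_K$ (and on $C$ in the first configuration), with the conditioning variable generating $W_i/K$. You then identify $x_i,s_i$ with the conditional informations by the chain rule, apply the hypothesis pointwise, and convert the posterior penalty using $\mathscr R_f-\mathfrak h_f=2\tau(f)-2\tau(A\mid C)$ or $\pi_i=2\tau_{\mathrm{sym}}(f_i)-\tauzero$. The step you flag as the main obstacle is routine: in the second configuration the posterior also conditions on $C$, and $\operatorname{Law}(B\mid C=c)$ is the translate of $\mu_c$ by $c$, so both orientations give the same averaged penalty $\E_c\tau_{\mu_c}(f)$, and the chain-rule identities are exact.
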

\begin{proof}
Choose generators $a_0,b_0$ of the two-dimensional quotient so that
$a_0+b_0$ generates $W_i/K$. Set
$A_0:=Z_{a_0}$, $B_0:=Z_{b_0}$, and $C_0:=Z_{a_0+b_0}$.
Let $D:=(C,Z_K)$ in the first inner configuration, and $D:=Z_K$
in the second inner configuration and in the outer setting.
At each value of $D$, the three outputs form a cyclic law.
The selected posterior $A_0\mid C_0,D$ is $f$ (or $f_i$ outside),
up to a translate determined by the conditioning values: any two
output rows outside $W_i$ differ by a row of $W_i$.
In the second inner configuration, $W_i$ contains the original $C$,
so this posterior also conditions on $C$.

The entropy chain rule identifies $x_i$ with
$\II[A_0:B_0\mid D]$ and $s_i$ with
$\II[A_0:C_0\mid D]+\II[B_0:C_0\mid D]$, averaged over the
orientations specified by the corresponding definition.
For example, the first mutual information expands as the entropies of
$W_j,W_k$ minus those of the top space and $K$.
All rank terms cancel. The other two expansions give $s_i$, proving
both the stated formulas and nonnegativity.

Apply the assumed cyclic inequality at each value of $D$ and average.
Translation invariance identifies its penalty term with the selected
posterior, giving
\[
M_\tau\leq ax_i+bs_i+2\tau(f),
\]
with the appropriate orientation average understood.
In the inner setting,
$\mathscr R_f-\mathfrak h_f=2\tau(f)-2\tau(A\mid C)$.
In the outer setting, $\pi_i=2\tau_{\mathrm{sym}}(f_i)-\tauzero$.
Subtracting the respective reference penalty gives
Eqs.~\eqref{eq:cyclic:substitution:inner} and~\eqref{eq:cyclic:substitution:outer}.
Further finite conditioning is handled by the same pointwise application
and averaging.
\end{proof}

The general and diagonal normalizations obey the same entropy and penalty
rules, with their respective copy symmetries.

\begin{lemma}[Outer normalization rules]\label{lem:outer:rules}
Suppose $\tau$ is two-sided-admissible.
Use Definition~\ref{def:outer:copies} or~\ref{def:outer:diagonal} and the
states in Definition~\ref{def:outer:state}. For coefficient spaces $P,Q$,
\begin{equation}\label{eq:outer:shannon}
\mathcal E(P)+\mathcal E(Q)-\mathcal E(P\cap Q)-\mathcal E(P+Q)\geq0.
\end{equation}
The functional $\mathcal E$ is additive on disjoint supports and vanishes
on one input coordinate. It is invariant under row operations and
permutations within each law, and under global interchange of the two
laws in the general setting. In the diagonal setting arbitrary copy
permutations are permitted. Moreover,
$h_f=\HH_{\mathrm{sym}}[f]-t$, and the following rules hold.
\begin{enumerate}
\item An unconditioned sum with output space $U$ satisfies
$\mathcal R(f)\leq2\mathcal E(U)$. For one copy, $\mathcal R(f)=0$.
\item Further conditioning of $g$ to obtain $f$, up to a known translate,
gives $\mathcal R(f)\leq\mathcal R(g)$.
\item A conditional convolution $f=g*h$ on disjoint supports gives
$\mathcal R(f)\leq\mathcal R(g)+2(h_f-h_g)$.
\item Removing independent irrelevant conditioning as in
Lemma~\ref{lem:copy:rules} preserves $\mathcal R$.
\item For $f=(W\subset U)$ and $v\in U\setminus W$,
$\mathcal R(f)\leq2\mathcal E(\operatorname{span}(v))$.
\item Further conditioning of $g$ to obtain $f$, up to a known translate,
gives
\begin{equation}\label{eq:outer:reverse:conditioning}
\mathcal R(f)-\mathcal R(g)+2(h_g-h_f)\geq0.
\end{equation}
\item A conditional convolution $f=g*h$ on disjoint supports gives
$\mathcal R(f)-\mathcal R(g)\geq0$.
\end{enumerate}
\end{lemma}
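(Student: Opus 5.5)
The plan is to reduce every assertion to the already-proved inner machinery, Lemma~\ref{lem:penalty:core} and Lemma~\ref{lem:copy:rules}, applied to the sampled laws $X$ and $Y$. The functional $\mathcal E$ is an average of unconditional entropies of linear tuples of independent copies, so the inner arguments carry over directly.

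\textbf{The entropy functional $\mathcal E$.} For Eq.~\eqref{eq:outer:shannon}, apply Lemma~\ref{lem:entropy:common:function} to the tuples $Z_P$ and $Z_Q$. Their common function is $Z_{P\cap Q}$, and their joint tuple is equivalent to $Z_{P+Q}$. The rank terms $\dim(\cdot)t$ cancel by the dimension identity $\dim P+\dim Q=\dim(P\cap Q)+\dim(P+Q)$. In the general setting the inequality holds for each orientation and then for the $\mathrm{sym}$ average.

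The structural properties follow in the same way. Additivity on disjoint supports follows from independence of the copies. Vanishing on one coordinate follows because $\HH_{\mathrm{sym}}[X_i]=t$. Invariance under row operations holds because they preserve the tuple up to bijection. Invariance under copy permutations holds when each input law is preserved, which means permuting within the $X$ and within the $Y$ coordinates; global interchange is absorbed by the $\mathrm{sym}$ average. In the diagonal setting all copies have the same law, so arbitrary permutations are allowed. The identity $h_f=\HH_{\mathrm{sym}}[f]-t$ is the chain rule $\HH[Z_U]-\HH[Z_W]=\HH[Z_v\mid Z_W]$, applied in each orientation.

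\textbf{The penalty rules.} Write $\Phi_+=\HH+2\tau$ as in Lemma~\ref{lem:penalty:core}. In each orientation, $2\tau(f)+\HH[f]$ minus the reference is exactly $\mathcal R(f)$ after averaging. The reference is $\tauzero+t$, which equals $\tfrac12(\Phi_+(X)+\Phi_+(Y))$, or $\Phi_+(X)$ in the diagonal setting. The rules then follow from Eqs.~\eqref{eq:core:conditioning} and~\eqref{eq:core:convolution}, used as follows.

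\begin{itemize}
\item Rules 2 and 6 are the two sides of Eq.~\eqref{eq:core:conditioning}, using $\HH[g]-\HH[f]=h_g-h_f$.
\item Rules 3 and 7 are the two sides of Eq.~\eqref{eq:core:convolution}.
\item Rule 4 follows from independence and translation invariance, as in Lemma~\ref{lem:copy:rules}.
\item Rule 5 follows from rule 1 applied to $\operatorname{span}(v)$, followed by rule 2.
\end{itemize}

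\textbf{Rule 1, the step requiring care.} An unconditioned sum $Z_v$ may mix $X$ and $Y$ copies. I would build it by convolving one input coordinate at a time, starting from a single coordinate, using the upper side of Eq.~\eqref{eq:core:convolution} at each step. Starting from one copy of $X$ gives $\Phi_+(Z_v)-\Phi_+(X)\le 2(\HH[Z_v]-\HH[X])$. The analogous bound holds starting from a copy of $Y$, if $v$ contains one.

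To compare with the symmetric reference I average the two orientations. Under global interchange, a coordinate that was $X$ becomes $Y$, so the $\mathrm{sym}$ average of the starting term is $\tfrac12(\Phi_+(X)+\Phi_+(Y))$ and the averaged starting entropy is $t$. This yields $\mathcal R(f)\le 2(\HH_{\mathrm{sym}}[Z_v]-t)=2\mathcal E(U)$, since $\dim U=1$.

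For one copy, the averaged state is exactly the reference, so $\mathcal R(f)=0$. The main obstacle is precisely this bookkeeping: each estimate must be paired with its interchanged counterpart so that the reference penalty comes out as $\tauzero$. Everything else is a direct transcription of Lemma~\ref{lem:copy:rules}.
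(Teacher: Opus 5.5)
Your proposal is correct and follows the paper's proof essentially step for step. It uses the same common-function argument with rank cancellation for Eq.~\eqref{eq:outer:shannon}, the same rewriting $\mathcal R(f)=\Phi_{+,\mathrm{sym}}(f)-(\tauzero+t)$, and the same reduction of the seven rules to Eqs.~\eqref{eq:core:conditioning} and~\eqref{eq:core:convolution} averaged over orientations; your coordinate-by-coordinate telescoping for rule~1 is equivalent to the paper's single application of Eq.~\eqref{eq:core:convolution} to one input copy and the remaining sum.
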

\begin{proof}
Apply Lemma~\ref{lem:entropy:common:function} to the two tuples and their
intersection, and average orientations when appropriate. The dimension
identity cancels all rank terms, giving Eq.~\eqref{eq:outer:shannon}.
Independence gives additivity on disjoint supports. One coordinate has
averaged entropy $t$. Row operations and the permitted permutations
preserve the tuple law or interchange the two orientations.
The chain rule gives $h_f=\HH_{\mathrm{sym}}[f]-t$.

Let $\Phi_{+,\mathrm{sym}}$ denote the orientation average of
$\Phi_+$ from Lemma~\ref{lem:penalty:core}.
Definition~\ref{def:outer:state} gives
\[
\mathcal R(f)=\Phi_{+,\mathrm{sym}}(f)-(\tauzero+t).
\]
For the first rule, apply Eq.~\eqref{eq:core:convolution} to one
input copy and the remaining sum, then average orientations.
That copy has averaged $\Phi_+$ equal to $\tauzero+t$.
For the second and sixth rules, apply Eq.~\eqref{eq:core:conditioning}
at each earlier conditioning value and average orientations.
For the third and seventh rules, apply Eq.~\eqref{eq:core:convolution}
at each pair of conditioning values and average orientations.
The fourth rule follows from independence and translation invariance.
For the fifth rule, apply the first rule to $Z_v$ and then the second
rule to condition on $Z_W$.
\end{proof}

\subsection{Simultaneous diagonal and branch improvement}
\label{sec:proofs:diagonal:refinement}

Convex combinations of cyclic bounds supply all coefficient pairs above
their lower convex envelope.

\begin{lemma}[Convex reduction of cyclic bounds]\label{lem:cyclic:convex}
Suppose $\mathsf C_{j,\rho}(1)$ holds for $0\leq j\leq83$.
If $a,b\geq0$ and there are weights $\lambda_j\geq0$ such that
\[
\sum_{j=0}^{83}\lambda_j=1,\qquad
\sum_{j=0}^{83}\lambda_j\alpha_j\leq a,\qquad
\sum_{j=0}^{83}\lambda_j\beta_j\leq b,
\]
then every cyclic law satisfies $M_\rho-2\rho(A\mid C)\leq ax+bs$.
The same conclusion holds under further finite conditioning.
\end{lemma}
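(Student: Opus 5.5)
The plan is a direct convex combination of the 84 hypotheses, using only that $x,s\geq0$. First, fix a cyclic law $A+B+C=0$ and write $x=\II[A:B]$ and $s=\II[A:C]+\II[B:C]$ as in Definition~\ref{def:cyclic:copies}. Both are mutual informations, so $x,s\geq0$.

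For each $j$, the hypothesis $\mathsf C_{j,\rho}(1)$ gives
\[
M_\rho-2\rho(A\mid C)\leq\alpha_jx+\beta_js.
\]
Multiply this by $\lambda_j\geq0$ and sum over $j$. Since $\sum_j\lambda_j=1$, the left side is unchanged, and we get
\[
M_\rho-2\rho(A\mid C)\leq\Big(\sum_{j}\lambda_j\alpha_j\Big)x+\Big(\sum_j\lambda_j\beta_j\Big)s.
\]
Because $x,s\geq0$, the bounds $\sum_j\lambda_j\alpha_j\leq a$ and $\sum_j\lambda_j\beta_j\leq b$ then give $M_\rho-2\rho(A\mid C)\leq ax+bs$.

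For the conditioned version, recall that Definition~\ref{def:closed:hypotheses} interprets $\mathsf C_{j,\rho}(1)$ under finite conditioning by applying it to each conditional cyclic law of positive probability and averaging. So the argument above is run pointwise. At each conditioning value $d$, the conditional law $(A,B,C)\mid D=d$ is itself cyclic, and its conditional quantities $x_d,s_d$ are nonnegative. The combination therefore gives the bound with $x_d,s_d$ at every such $d$. Averaging over $d$, which is linear, with $M_\rho$ held fixed, yields the averaged inequality.

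I expect no real obstacle here. The only point to check is that the nonnegativity of $x$ and $s$ is used before replacing the combined coefficients by $a$ and $b$. Pointwise, this holds because each conditional mutual information is nonnegative.
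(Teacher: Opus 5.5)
Your proposal is correct and matches the paper's proof: multiply the $j$th hypothesis by $\lambda_j$, sum using $\sum_j\lambda_j=1$, use $x,s\geq0$ to replace the combined coefficients by $a,b$, and handle further finite conditioning by applying the argument to each conditional cyclic law and averaging.
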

\begin{proof}
Multiply the $j$th hypothesis by $\lambda_j$ and sum over $j$.
Since $x,s\geq0$, the coefficient inequalities give the assertion.
Apply this argument to each conditional law and average.
\end{proof}

The next comparison strictly improves the diagonal and cyclic bounds
together. Each source bound follows from the hypotheses by convex reduction.

\begin{lemma}[Simultaneous diagonal and cyclic comparison]
\label{lem:diagonal:comparison}
Suppose $\tau$ is two-sided-admissible and, for every $N\geq1$,
$\mathsf D_{\tau^{[N]}}(B)$ and $\mathsf C_{j,\tau^{[N]}}(1)$ hold
for every $j\in\{0,\ldots,83\}$.
Then $\mathsf D_{\tau^{[N]}}(c)$ and
$\mathsf C_{j,\tau^{[N]}}(\vartheta)$ hold for every such $N,j$, where
\begin{equation}\label{eq:closed:contraction}
0<c/B\leq\vartheta=1-10^{-13}<1.
\end{equation}
\end{lemma}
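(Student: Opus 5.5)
The proof proceeds one chain power at a time. Fix $N$ and write $\rho:=\tau^{[N]}$. By Lemma~\ref{lem:penalty:chain}, $\rho$ is two-sided-admissible. Its own chain powers are $\rho^{[L]}=\tau^{[NL]}$, so the hypotheses $\mathsf D_{\rho^{[L]}}(B)$ and $\mathsf C_{j,\rho^{[L]}}(1)$ hold for every $L$. Consequently:
\begin{itemize}
\item Lemma~\ref{lem:inverse:ensemble} applies to $\rho$ with $b=B$. It gives the averaged bound \eqref{eq:ensemble:inverse} for every finite ensemble, and the paired bound \eqref{eq:ensemble:pair} for posterior ensembles that agree up to translation.
\item Lemma~\ref{lem:cyclic:convex}, followed by Lemma~\ref{lem:cyclic:substitution}, supplies a cyclic bound with any coefficient pair $(a,b)$ lying above the convex envelope of the $(\alpha_j,\beta_j)$. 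This bound holds on every inner quotient (both configurations of Definition~\ref{def:inner:quotient}) and on every outer quotient (Definition~\ref{def:outer:quotient}).
\end{itemize}
It then suffices to prove two kinds of target inequalities for $\rho$. The first is the diagonal bound $M_\rho\le 2\rho(X)+c\,\dist[X;X]$. The second is, for each $j$, the cyclic bound $M_\rho-2\rho(A\mid C)\le\vartheta(\alpha_jx+\beta_js)$.

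For the diagonal target I would use the outer copies of Definition~\ref{def:outer:diagonal}, with $36$ independent copies of $X$. The normalization is $d=\dist[X;X]=\mathcal E(\operatorname{span}(e_1+e_2))$. I would write each target as a nonnegative rational combination of four kinds of rows, with total cost mass one:
\begin{enumerate}
\item inverse rows: \eqref{eq:ensemble:pair} applied to pairs of outer states whose posterior ensembles match via Lemma~\ref{lem:ensemble:flags}, or \eqref{eq:ensemble:inverse} applied to weighted unions with weight $\omega$;
\item quotient rows \eqref{eq:cyclic:substitution:outer};
\item penalty rows, namely the rules of Lemma~\ref{lem:outer:rules}, which bound each $\mathcal R(f)$ and $\pi_i$;
\item entropy rows, namely the submodularity rows \eqref{eq:outer:shannon} together with additivity and symmetry of $\mathcal E$.
\end{enumerate}
Every left side contributes $M_\rho-\tauzero$. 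Once the penalty and entropy terms cancel, the combination reads $M_\rho-\tauzero\le c\,d$.

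The cyclic targets are handled the same way, using the inner copies of Definition~\ref{def:cyclic:copies}. Fix a cyclic law $A+B+C=0$, condition on $C$, and draw $32$ fresh copies of $A\mid C$; average over the two orientations. Lemma~\ref{lem:copy:normalization} expresses all singleton quantities through $x$ and $s$:
\[
m=s/2,\qquad e=s-x,\qquad \mathscr U(C,Z_W)=e+\mathscr E(W).
\]
The available rows are the following:
\begin{itemize}
\item the entropy inequalities \eqref{eq:copy:shannon}, \eqref{eq:copy:conditional:shannon} and \eqref{eq:copy:entropy:conditioning};
\item the penalty rules of Lemma~\ref{lem:copy:rules}, including reverse conditioning \eqref{eq:copy:reverse:conditioning}, which bound each $\mathscr R_f$;
\item the inner quotient rows \eqref{eq:cyclic:substitution:inner};
\item the inverse rows, obtained by applying Lemma~\ref{lem:inverse:ensemble} at each $C=c$ to the matched ensembles and averaging.
\end{itemize}
Each inverse row carries the factor $B$, and each quotient row carries its envelope pair. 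A combination with cost mass one then leaves $M_\rho-2\rho(A\mid C)$ on the left and $\vartheta(\alpha_jx+\beta_js)$ on the right, with every auxiliary entropy and penalty term cancelling or entering with nonnegative weight against a known-nonnegative quantity.

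The verification therefore reduces to exhibiting $85$ finite certificates, namely the row families and rational coefficients of Appendix~\ref{app:coefficients}, and checking exact cancellation. The bounds on the constants, $c/B\le\vartheta$ and $c<B$, come from comparing the rationals of Definition~\ref{def:diagonal:constants}. This reduction is mechanical.

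The main obstacle is showing that the linear systems are feasible with margin $\vartheta<1$. This is Lemma~\ref{lem:finite:feasibility}, and it requires checking that each chosen combination really has total mass one and that its leftover terms are all of the correct sign. A second, more conceptual point is verifying that every inverse row is legitimate: the two states involved must have disjoint supports and matching flags under a law-preserving permutation, so that Lemma~\ref{lem:ensemble:flags} applies. In the inner setting one must also check that the ensemble bound survives averaging over $C$, which holds because the hypotheses were imposed pointwise for every conditional law. I would first build the combinations for the extreme branches $j=0$ and $j=83$, then obtain intermediate $j$ by interpolation wherever the envelope is linear, and solve directly only where it is not.
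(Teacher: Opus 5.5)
Your proposal is correct and follows essentially the paper's route: pass to $\rho=\tau^{[N]}$ with $\rho^{[m]}=\tau^{[Nm]}$, license quotient rows by Lemma~\ref{lem:cyclic:convex} together with Lemma~\ref{lem:cyclic:substitution} and inverse rows by Lemma~\ref{lem:inverse:ensemble}, and close with the 85 rational cone identities of Lemma~\ref{lem:finite:feasibility}, the constants giving $c/B\leq\vartheta$. One small correction: the paper's inner inverse rows use any coefficient in $10^{-9}\mathbb Z\cap[B,B_0]$, not only $B$, and this is harmless because $\mathsf D_{\rho^{[m]}}(B)$ implies each such bound by nonnegativity of Ruzsa distance.
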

\begin{proof}
First work with $\tau$ on $G$. Define
$L_{\mathrm{in}}:=M_\tau-2\tau(A\mid C)$ and
$L_{\mathrm{out}}:=M_\tau-2\tau(X)$.
Lemma~\ref{lem:finite:feasibility} supplies the coefficient tuple and
the 85 rational identities in Definition~\ref{def:finite:diagonal}.
Every licensed source pair is above a convex combination of the
hypothesized pairs, so Lemma~\ref{lem:cyclic:convex} supplies its cyclic
bound. Every inner inverse coefficient lies in $[B,B_0]$.
Nonnegativity of Ruzsa distance therefore lets
$\mathsf D_\tau(B)$ supply every licensed diagonal hypothesis.

The inner and outer row families use 32 fresh conditional copies and
36 original copies, respectively. Their formal normalizations and all
entropy, penalty, inverse, and quotient rows are specified in
Definitions~\ref{def:finite:coordinates} and~\ref{def:finite:rows}.
Lemma~\ref{lem:finite:validity} makes every row nonnegative under the
present hypotheses.
Let $m_j$ and $m_{\mathrm{out}}$ be the numbers of rows in the selected
$j$th inner identity and outer identity, respectively.
The selected identities have the form
\[
\vartheta\alpha_jx+\vartheta\beta_js-L_{\mathrm{in}}=\sum_{r=1}^{m_j} w_{jr}\mathcal I_{jr},\qquad
cd-L_{\mathrm{out}}=\sum_{r=1}^{m_{\mathrm{out}}} w_r\mathcal J_r,
\]
where all weights are nonnegative and the weights on the inverse and
quotient costs sum to one. All auxiliary entropy and penalty
coefficients cancel by the exact cone test in
Definition~\ref{def:finite:cone}.
The first identity proves $\mathsf C_{j,\tau}(\vartheta)$ for every $j$.
The second proves $\mathsf D_\tau(c)$, including when $d=0$.

For general $N$, apply the same argument to $\rho:=\tau^{[N]}$.
Lemma~\ref{lem:penalty:chain} gives $\rho^{[m]}=\tau^{[Nm]}$.
Thus the diagonal ensemble premises and cyclic source hypotheses hold
on all further chain powers of $\rho$. The conclusions hold
simultaneously for every $N$, and under additional finite conditioning
by applying the argument to each conditional law and averaging.
The constants in Definition~\ref{def:diagonal:constants} give
$c/B\leq\vartheta<1$.
\end{proof}

A strict improvement of homogeneous bounds can be iterated
from any finite initial scale. The index below may include every chain power
and every input law.

\begin{lemma}[Finite scaling]\label{lem:finite:scaling}
Suppose $\mathcal T$ is a class of penalties closed under
$\rho\mapsto\rho/\lambda$ for $\lambda\geq1$.
For each $\rho\in\mathcal T$, let $\mathcal L_\rho(\xi)$ be real
quantities indexed by a common set, with nonnegative budgets $Q_\xi$,
such that $\mathcal L_{\rho/\lambda}(\xi)=\mathcal L_\rho(\xi)/\lambda$.
Fix $0<q<1$. Assume that, for every $\rho\in\mathcal T$,
\[
\{\mathcal L_\rho(\xi)\leq Q_\xi\text{ for all }\xi\}
\ \Longrightarrow\
\{\mathcal L_\rho(\xi)\leq qQ_\xi\text{ for all }\xi\}.
\]
If $\tau\in\mathcal T$ and
$\mathcal L_\tau(\xi)\leq\Lambda Q_\xi$ for all $\xi$ and some
finite $\Lambda\geq1$, then
$\mathcal L_\tau(\xi)\leq Q_\xi$ for all $\xi$.
\end{lemma}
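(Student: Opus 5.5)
The plan is a finite induction on the scale, using the homogeneity $\mathcal L_{\rho/\lambda}(\xi)=\mathcal L_\rho(\xi)/\lambda$ to transport the improvement hypothesis between rescaled penalties. Define the statement $S(\lambda)$: $\mathcal L_\tau(\xi)\leq\lambda Q_\xi$ for all $\xi$. By assumption $S(\Lambda)$ holds with $\Lambda\geq1$. The goal is $S(1)$. The key step is to show that $S(\lambda)$ with $\lambda\geq1$ implies $S(\max\{1,q\lambda\})$. Since $q<1$, starting from $\Lambda$ the sequence $\lambda_{k+1}:=\max\{1,q\lambda_k\}$ reaches $1$ after at most $\lceil\log\Lambda/\log(1/q)\rceil$ steps. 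Finitely many applications of the step therefore give $S(1)$; no limiting argument is needed.

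For the key step, suppose $S(\lambda)$ holds with $\lambda\geq1$, and set $\rho:=\tau/\lambda$. This lies in $\mathcal T$ because $\mathcal T$ is closed under division by scalars at least one. Homogeneity gives $\mathcal L_\rho(\xi)=\mathcal L_\tau(\xi)/\lambda\leq Q_\xi$ for all $\xi$, so the hypothesis of the implication holds for $\rho$. The implication then yields $\mathcal L_\rho(\xi)\leq qQ_\xi$, and multiplying back by $\lambda>0$ gives $\mathcal L_\tau(\xi)\leq q\lambda Q_\xi$. There are two cases. If $q\lambda\geq1$, this is $S(q\lambda)$. If $q\lambda<1$, then nonnegativity of $Q_\xi$ gives $q\lambda Q_\xi\leq Q_\xi$, which is $S(1)$. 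In both cases $S(\max\{1,q\lambda\})$ holds.

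The argument has no real obstacle; the points to check are bookkeeping. First, at every stage the rescaling factor $\lambda$ must be at least $1$, so that $\tau/\lambda\in\mathcal T$. This is exactly why the iteration is capped at $\max\{1,\cdot\}$ rather than allowed to pass below one. Second, the final truncation step uses $Q_\xi\geq0$. Third, the implication must be applied to the single penalty $\rho=\tau/\lambda_k$ with all indices $\xi$ treated simultaneously. This matters because in the intended application, via Lemma~\ref{lem:diagonal:comparison}, the index set includes every chain power and every law, and the premise there must hold for all of them at once.
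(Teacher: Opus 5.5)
Your proposal is correct and matches the paper's proof essentially step for step. Both arguments rescale to $\tau/\lambda\in\mathcal T$, apply the improvement at unit scale, multiply back by $\lambda$, use $Q_\xi\geq0$ to pass to scale $\max\{1,q\lambda\}$, and terminate after finitely many iterations because $\Lambda q^k\leq1$ for some finite $k$.
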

\begin{proof}
Suppose the bounds hold at a scale $\lambda\geq1$.
Homogeneity gives the unit-scale hypotheses for $\tau/\lambda$.
Apply the strict improvement and multiply by $\lambda$ to obtain
$\mathcal L_\tau(\xi)\leq q\lambda Q_\xi$ for every $\xi$.
Since $Q_\xi\geq0$, this implies the bounds at scale
$\max\{1,q\lambda\}$.
Starting from $\Lambda$, induction therefore gives scale
$\max\{1,\Lambda q^k\}$ for every integer $k\geq0$.
A finite $k$ satisfies $\Lambda q^k\leq1$, proving the assertion.
\end{proof}

Liao's inverse bound and the initial conditional-copy estimate supply
all hypotheses at one common scale. The strict comparison reduces
that scale to one.

\begin{lemma}[Diagonal inverse bound]\label{lem:diagonal:refined}
Suppose $\tau$ is a two-sided-admissible penalty on $G$.
Then $\mathsf D_{\tau^{[N]}}(c)$ and
$\mathsf C_{j,\tau^{[N]}}(\vartheta)$ hold for every $N\geq1$ and
$j\in\{0,\ldots,83\}$.
\end{lemma}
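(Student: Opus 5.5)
We derive the statement from Lemma~\ref{lem:finite:scaling} combined with Lemma~\ref{lem:diagonal:comparison}. Take $\mathcal T$ to be the class of penalties $\tau/\lambda$ for $\lambda\geq1$. Since dividing a two-sided-admissible penalty by $\lambda\geq1$ keeps it two-sided-admissible (the admissibility inequalities only weaken), this class is closed under the required scaling. The index set $\xi$ runs over all pairs $(N,X)$ with $X$ a law on $G^N$, for the diagonal assertions, and all triples $(N,j,\text{cyclic law on }G^N)$, for the cyclic assertions. For a diagonal index set
\[
\mathcal L_\rho(\xi):=M_{\rho^{[N]}}-2\rho^{[N]}(X),\qquad Q_\xi:=B\,\dist[X;X].
\]
For a cyclic index set
\[
\mathcal L_\rho(\xi):=M_{\rho^{[N]}}-2\rho^{[N]}(A\mid C),\qquad Q_\xi:=\alpha_jx+\beta_js.
\]
The budgets are nonnegative because Ruzsa distance and mutual informations are nonnegative, and $\alpha_j,\beta_j\geq0$. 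Homogeneity $\mathcal L_{\rho/\lambda}=\mathcal L_\rho/\lambda$ follows from Eq.~\eqref{eq:chain:identities}, namely $(\rho/\lambda)^{[N]}=\rho^{[N]}/\lambda$, together with linearity of $M$ in the penalty. Lemma~\ref{lem:diagonal:comparison}, applied to each $\rho\in\mathcal T$, supplies the strict-improvement hypothesis with $q=\vartheta$. For the diagonal part this uses $c\leq\vartheta B$, so that $\mathsf D(c)$ implies the $\vartheta$-scaled version of $\mathsf D(B)$.

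Next we need a finite initial scale $\Lambda$. Liao's inverse lemma~\cite[Lemma~7]{l24}, whose argument is summarized in Eq.~\eqref{eq:tech:liao-threshold}, applies to any admissible penalty. Each $\tau^{[N]}$ is admissible on $G^N=\F_2^{nN}$ by Lemma~\ref{lem:penalty:chain}, so the lemma gives $\mathsf G_{\tau^{[N]}}(8)$ and in particular $\mathsf D_{\tau^{[N]}}(8)$. Lemma~\ref{lem:branch:initial}, applied with $\kappa_0=8$ and divided by $\eta$, then gives
\[
M_{\tau^{[N]}}-2\tau^{[N]}(A\mid C)\leq 8(x+s).
\]
We must check that $8\leq 8B$, $8\leq 8\alpha_j$ and $8\leq 8\beta_j$, i.e.\ that $B,\alpha_j,\beta_j\geq1$. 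For $B$ this is immediate; for the branch coefficients we rely on the stated fact that they are at least one, from Definition~\ref{def:finite:diagonal} and Lemma~\ref{lem:finite:feasibility}. Hence all hypotheses hold with $\Lambda=8$. (If some $\alpha_j<1$, we would take $\Lambda=8/\min\{1,\min_j\alpha_j,\min_j\beta_j\}$, which is still finite as long as all coefficients are positive.) Lemma~\ref{lem:finite:scaling} then yields $\mathsf D_{\tau^{[N]}}(B)$ and $\mathsf C_{j,\tau^{[N]}}(1)$ for every $N$ and $j$ at scale one.

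Finally, apply Lemma~\ref{lem:diagonal:comparison} once more to $\tau$ itself. Its hypotheses now hold for all $N$, so it returns $\mathsf D_{\tau^{[N]}}(c)$ and $\mathsf C_{j,\tau^{[N]}}(\vartheta)$, which is the claim.

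The main subtlety is bookkeeping rather than analysis. The hypotheses of Lemma~\ref{lem:diagonal:comparison} quantify over all chain powers simultaneously, so the index set in the scaling lemma must include every $N$ at once. The statement stays consistent because $(\tau^{[N]})^{[L]}=\tau^{[NL]}$, and because the initial bound must hold uniformly in $N$; this works since Liao's constant $8$ does not depend on the group. A second point to verify is that Liao's lemma is applicable to chain powers on $G^N$: they are admissible in the sense of Definition~\ref{def:penalty} on the larger elementary abelian $2$-group, which is what Lemma~\ref{lem:penalty:chain} provides.
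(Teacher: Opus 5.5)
Your proposal is correct and follows the paper's proof essentially step for step: you initialize at scale $\Lambda=8$ via Liao's Lemma~7 on every chain power and Lemma~\ref{lem:branch:initial} (using $B,\alpha_j,\beta_j\geq1$), you run Lemma~\ref{lem:finite:scaling} on the diagonal budget and the 84 cyclic budgets with contraction factor $\vartheta$ supplied by Lemma~\ref{lem:diagonal:comparison} (where $c\leq\vartheta B$ handles the diagonal part), and you finish with one more application of Lemma~\ref{lem:diagonal:comparison} to obtain $c$ and $\vartheta$. Your explicit check that $\tau/\lambda$ stays two-sided-admissible for $\lambda\geq1$ is a detail the paper leaves implicit, and your fallback for $\alpha_j<1$ is unnecessary because the grid in Definition~\ref{def:finite:diagonal} already forces $\alpha_j,\beta_j\geq1$.
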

\begin{proof}
For $\lambda\geq1$, let $\mathsf H(\lambda)$ mean that
$\mathsf D_{(\tau/\lambda)^{[N]}}(B)$ and
$\mathsf C_{j,(\tau/\lambda)^{[N]}}(1)$ hold for every $N,j$.
Liao~\cite[Lemma~7]{l24}, with both penalties equal to $\tau^{[N]}$,
gives $\mathsf G_{\tau^{[N]}}(8)$ and hence
$\mathsf D_{\tau^{[N]}}(8)$.
Lemma~\ref{lem:branch:initial}, after division by $\eta$, gives
\[
M_{\tau^{[N]}}-2\tau^{[N]}(A\mid C)\leq8(x+s).
\]
These statements hold for every $N$ by Lemma~\ref{lem:penalty:chain}.
The constants in Definitions~\ref{def:diagonal:constants}
and~\ref{def:diagonal:branches} satisfy
\begin{equation}\label{eq:closed:initialization}
8\leq8B,\qquad8\leq8\alpha_j,\qquad8\leq8\beta_j.
\end{equation}
Dividing the initial inequalities by eight and applying the scaling
identity in Lemma~\ref{lem:penalty:chain} proves $\mathsf H(8)$.

Apply Lemma~\ref{lem:finite:scaling} to the family consisting, on
every chain power, of the diagonal bound with budget $B\dist[X;X]$
and the 84 cyclic bounds with budgets $\alpha_jx+\beta_js$.
Lemma~\ref{lem:penalty:chain} gives homogeneity.
Lemma~\ref{lem:diagonal:comparison} gives the common improvement factor
$\vartheta<1$, and the initialization gives $\Lambda=8$.
Thus $\mathsf H(1)$ holds. Applying
Lemma~\ref{lem:diagonal:comparison} once more gives the asserted
coefficients $c$ and $\vartheta$.
\end{proof}

One of the cyclic bounds has a short numerical consequence.

\begin{corollary}[Conditional cyclic estimate]\label{lem:diagonal:cyclic:target}
Suppose $\tau$ is two-sided-admissible. Every cyclic law on $G^N$ satisfies
\begin{equation}\label{eq:diagonal:cyclic:target}
M_{\tau^{[N]}}-2\tau^{[N]}(A\mid C)\leq4x+\frac{18}{5}s.
\end{equation}
\end{corollary}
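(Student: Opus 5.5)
The plan is to read off Eq.~\eqref{eq:diagonal:cyclic:target} from Lemma~\ref{lem:diagonal:refined}. That lemma gives, for every $N\geq1$ and every $j\in\{0,\ldots,83\}$, the assertion $\mathsf C_{j,\tau^{[N]}}(\vartheta)$:
\[
M_{\tau^{[N]}}-2\tau^{[N]}(A\mid C)\leq\vartheta(\alpha_jx+\beta_js)
\]
for every cyclic law on $G^N$. Since $x,s\geq0$ and $\vartheta<1$, it suffices to exhibit one index $j$, or a convex combination of indices, whose coefficients are dominated by $(4,18/5)$. Concretely, I would locate weights $\lambda_j\geq0$ with $\sum_j\lambda_j=1$ such that
\[
\vartheta\sum_j\lambda_j\alpha_j\leq4,\qquad \vartheta\sum_j\lambda_j\beta_j\leq\tfrac{18}{5}.
\]
With these weights in hand, the argument of Lemma~\ref{lem:cyclic:convex}, applied with $\rho=\tau^{[N]}$ to the $\vartheta$-scaled hypotheses, gives the conclusion.

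The first step is to note that $\tau^{[N]}$ is two-sided-admissible on $G^N$ (Lemma~\ref{lem:penalty:chain}), so the statement on $G^N$ is covered. Lemma~\ref{lem:diagonal:refined} itself is phrased for all chain powers, so no extra argument is needed there. The second step is to multiply the $j$th inequality by $\lambda_j$ and sum, using nonnegativity of $x$ and $s$ (mutual informations) to replace the coefficients by their upper bounds.

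The only real content, and the main obstacle, is the numerical check that $(4,18/5)$ lies above the lower convex envelope of the pairs $\vartheta(\alpha_j,\beta_j)$ from Definition~\ref{def:finite:diagonal}. I would first try a single index: the pairs are ordered by increasing $\alpha_j$, so I would take the largest $j$ with $\alpha_j\leq4$ and check whether $\beta_j\leq18/5$. If no single pair works, I would take the two consecutive pairs straddling $\alpha=4$ and interpolate, choosing the convex weight to hit first coordinate exactly $4$ and verifying the interpolated second coordinate is at most $18/5$. Either check is a finite rational comparison against the coefficient table in the appendix. The factor $\vartheta=1-10^{-13}$ only helps.
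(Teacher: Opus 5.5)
Your reduction is the same as the paper's: Lemma~\ref{lem:diagonal:refined} gives $\mathsf C_{j,\tau^{[N]}}(\vartheta)$ for every $j$ on every chain power, and $\vartheta<1$ together with $x,s\geq0$ lets you weaken the coefficients. The gap is in the step you call the main obstacle.

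You propose to check that $(4,18/5)$ dominates some $\vartheta(\alpha_j,\beta_j)$, or a convex combination of them, by comparing against ``the coefficient table in the appendix.'' The paper contains no such table. The pairs $(\alpha_j,\beta_j)$ of Definition~\ref{def:diagonal:branches} are defined only implicitly, as the lexicographically first tuple accepted by the enumeration in Definition~\ref{def:finite:diagonal}. Lemma~\ref{lem:finite:feasibility} also says that this tuple need not coincide with the tuple used to verify feasibility. So the comparison you describe cannot be carried out, and as written your argument does not show that any index, or convex combination, works.

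The missing observation is that no computation is needed. The enumeration domain in Definition~\ref{def:finite:diagonal} itself requires $\alpha_{67}\leq4$ and $\beta_{67}\leq18/5$ (Eq.~\eqref{eq:finite:diagonal:order}). Every tuple considered, and in particular the selected one, therefore has this property. Taking $j=67$ in $\mathsf C_{j,\tau^{[N]}}(\vartheta)$ gives $M_{\tau^{[N]}}-2\tau^{[N]}(A\mid C)\leq\vartheta\alpha_{67}x+\vartheta\beta_{67}s\leq4x+\tfrac{18}{5}s$ directly. No search for the largest $j$ with $\alpha_j\leq4$, interpolation between consecutive pairs, or appeal to Lemma~\ref{lem:cyclic:convex} is required. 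This is exactly the paper's proof; the constraint was built into the grid so that this corollary is immediate.
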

\begin{proof}
Lemma~\ref{lem:diagonal:refined} gives the bound with coefficients
$\vartheta\alpha_{67}$ and $\vartheta\beta_{67}$.
Definition~\ref{def:finite:diagonal} gives $\alpha_{67}\leq4$ and
$\beta_{67}\leq18/5$. Since $\vartheta<1$ and $x,s\geq0$, the result follows.
\end{proof}

\subsection{Entropic and covering consequences}\label{sec:proofs:consequences}

Ruzsa distance to a fixed law is a two-sided-admissible penalty.
It gives the diagonal entropic consequence and the two-law consequence
in Appendix~\ref{app:two:law}.

\begin{lemma}[Reference-distance penalty]\label{lem:reference:admissible}
Suppose $R$ is a fixed $G$-valued random variable. Then
$\sigma_R(Z):=\dist[R;Z]$ is a two-sided-admissible penalty.
\end{lemma}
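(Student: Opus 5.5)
We will verify two-sided admissibility through Lemma~\ref{lem:penalty:core}. That lemma says it suffices to check three things: translation invariance, and concavity of the two functionals $\Phi_\pm(P)=\HH[P]\pm2\sigma_R(P)$. Continuity then follows automatically. Translation invariance is immediate, since $\HH[R'-(Z+s)]=\HH[R'-Z]$ and $\HH[Z+s]=\HH[Z]$.

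Let $R'$ be independent of $Z$. Writing out the definition,
\[
2\sigma_R(Z)=2\HH[R'-Z]-\HH[R]-\HH[Z].
\]
This gives
\[
\Phi_+(Z)=2\HH[R'-Z]-\HH[R],\qquad \Phi_-(Z)=2\HH[Z]-2\HH[R'-Z]+\HH[R].
\]
Up to constants, we must therefore show that $P\mapsto\HH[R'-Z]$ and $P\mapsto\HH[Z]-\HH[R'-Z]$ are both concave in the law $P$ of $Z$.

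We use the mixture formulation from the proof of Lemma~\ref{lem:penalty:core}: let $D$ be a mixture label jointly distributed with $Z$, with $R'$ independent of $(Z,D)$.

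For $\Phi_+$, concavity means $\HH[R'-Z]\geq\HH[R'-Z\mid D]$. This is just ``conditioning reduces entropy''. Since $R'$ is independent of $(Z,D)$, the conditional law of $R'-Z$ given $D=d$ is $R*\operatorname{Law}(Z\mid D=d)$, so the conditional entropy is indeed the mixture average.

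For $\Phi_-$, concavity means
\[
\HH[Z]-\HH[Z\mid D]\geq\HH[R'-Z]-\HH[R'-Z\mid D],\qquad\text{that is,}\qquad \II[Z:D]\geq\II[R'-Z:D].
\]
This is the data processing inequality applied to the Markov chain $D\to Z\to R'-Z$. The chain holds because $R'$ is independent of $(Z,D)$, so $R'-Z$ depends on $D$ only through $Z$.

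The two inequalities together give Eq.~\eqref{eq:core:two:sided}, with $\tau=\sigma_R$. Lemma~\ref{lem:penalty:core} then yields continuity, and with translation invariance, two-sided admissibility.

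The only point needing care is the $\Phi_-$ direction. One must set up the coupling so that $R'$ is independent of the pair $(Z,D)$, not merely of $Z$; with that coupling the Markov property, and hence data processing, applies directly. Everything else is bookkeeping of the constant $\HH[R]$.
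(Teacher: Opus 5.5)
Your proof is correct and is essentially the paper's argument. The paper computes $\sigma_R(Z\mid Q)-\sigma_R(Z)=\tfrac12\II[Z:Q]-\II[R-Z:Q]$ directly, taking $R$ independent of $(Z,Q)$. It then bounds $0\leq\II[R-Z:Q]\leq\II[Z:Q]$ by data processing on $Q\to Z\to R-Z$, and these are exactly your two concavity inequalities for $\Phi_+$ and $\Phi_-$, unpacked through the equivalence in Lemma~\ref{lem:penalty:core}, which then supplies continuity and convolution admissibility in both versions.
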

\begin{proof}
Suppose $Q$ is jointly distributed with $Z$, and take $R$ independent
of $(Z,Q)$.
\begin{samepage}
We have
\begin{align}
\sigma_R(Z\mid Q)-\sigma_R(Z)
&=\HH[R-Z\mid Q]-\HH[R-Z]
 +\tfrac12(\HH[Z]-\HH[Z\mid Q])\notag\\
&=\tfrac12\II[Z:Q]-\II[R-Z:Q].
\label{eq:sigma:conditioning}
\end{align}
The first step follows from the definition of $\sigma_R$.
The last step follows from the definition of mutual information.
\par
\end{samepage}
The Markov chain $Q\to Z\to R-Z$ gives
$0\leq\II[R-Z:Q]\leq\II[Z:Q]$ by data processing.
Substituting this bound into Eq.~\eqref{eq:sigma:conditioning} proves
two-sided conditioning. Translation invariance follows from
Definition~\ref{def:ruzsa}.
Lemma~\ref{lem:penalty:core} gives continuity and convolution
admissibility, completing the proof.
\end{proof}

Applying the reference penalty to two copies of one law gives the
diagonal entropic bound.

\begin{corollary}[Diagonal entropic bound]\label{cor:entropic}
Suppose $X$ is a $G$-valued random variable.
Then there exists a subspace $V\leq G$ such that
\[
    2\dist[X;\cU_V]
    \leq
    \frac{1257363397}{200000000}\dist[X;X].
\]
\end{corollary}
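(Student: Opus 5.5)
The plan is to specialise Lemma~\ref{lem:diagonal:refined} to the reference-distance penalty of Lemma~\ref{lem:reference:admissible}, taking the reference law to be $X$ itself. Concretely, set $R:=X$ and $\tau:=\sigma_X$, so that $\tau(Z)=\dist[X;Z]$ for every law $Z$ on $G$. By Lemma~\ref{lem:reference:admissible}, $\tau$ is two-sided-admissible on $G$. Hence Lemma~\ref{lem:diagonal:refined} applies and gives $\mathsf D_{\tau^{[N]}}(c)$ for every $N$. We only need the case $N=1$, where $\tau^{[1]}=\tau$ by Definition~\ref{def:penalty:chain}. So $\mathsf D_\tau(c)$ holds:
\[
M_\tau\leq 2\tau(Z)+c\,\dist[Z;Z]\qquad\text{for every law $Z$ on $G$}.
\]

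Next I evaluate both sides at $Z:=X$. The penalty term is $\tau(X)=\sigma_X(X)=\dist[X;X]$, so the right side becomes $(2+c)\dist[X;X]$. For the left side, Definition~\ref{def:inverse:bounds} gives $M_\tau=\min_{V\leq G}2\tau(\cU_V)=\min_{V\leq G}2\dist[X;\cU_V]$. Since $G=\F_2^n$ has finitely many subspaces, this minimum is attained at some $V\leq G$. For that $V$,
\[
2\dist[X;\cU_V]\leq(2+c)\,\dist[X;X].
\]

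It remains to check the constant. Definition~\ref{def:diagonal:constants} gives $c=857363397/200000000$, so
\[
2+c=\frac{400000000+857363397}{200000000}=\frac{1257363397}{200000000},
\]
which is exactly the stated coefficient.

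There is essentially no obstacle here; all the work sits in Lemma~\ref{lem:diagonal:refined}. The only point to watch is that $\sigma_X$ must be a valid input to that lemma. This means two-sided admissibility, which is precisely what Lemma~\ref{lem:reference:admissible} supplies. Note that $M_\tau$ is computed for the same penalty $\tau$ that appears on the right of $\mathsf D_\tau(c)$. Because of this, the self-referential choice $R=X$ is legitimate: $X$ is fixed before the penalty is formed, and $\mathsf D_\tau(c)$ is then applied to the particular law $Z=X$.
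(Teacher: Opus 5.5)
Your proposal is correct and follows the paper's own proof: you take $\tau(Z):=\dist[X;Z]$, which is two-sided-admissible by Lemma~\ref{lem:reference:admissible}, apply $\mathsf D_\tau(c)$ from Lemma~\ref{lem:diagonal:refined} at $Z=X$ with $V$ attaining $M_\tau$, and check that $2+c=1257363397/200000000$.
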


\begin{proof}
Set $\tau(Z):=\dist[X;Z]$, which is two-sided-admissible by
Lemma~\ref{lem:reference:admissible}.
Apply Lemma~\ref{lem:diagonal:refined} and choose a subspace $V$
attaining $M_\tau$. The assertion $\mathsf D_\tau(c)$ becomes
\[
2\dist[X;\cU_V]\leq(2+c)\dist[X;X].
\]
Since $2+c=1257363397/200000000$, this is the required bound.
\end{proof}

Projected KL divergence is convex, while its sum with entropy is concave.
The two-sided conditioning criterion then gives its regularity.

\begin{lemma}[Regularity of projected KL divergence]\label{lem:KL:conditioning}
Suppose $\mathcal C$ is a nonempty convex family of probability distributions
on a finite set $\Omega=\{\omega_1,\ldots,\omega_q\}$ and contains a
full-support distribution.
Define $J_{\mathcal C}(P):=\inf_{Q\in\mathcal C}\KL(P\,\|\,Q)$.
This functional is continuous. For every finite joint law of $X,D$ with $X$ taking values in $\Omega$,
\begin{equation}\label{eq:KL:projection:conditioning}
0\leq\E_D J_{\mathcal C}(\operatorname{Law}(X\mid D))
 -J_{\mathcal C}(\operatorname{Law}(X))\leq\II[X:D].
\end{equation}
Consequently, for every real constant $c$, the functional
$\rho(P):=J_{\mathcal C}(P)+\frac12\HH[P]-c$ satisfies
\begin{equation}\label{eq:KL:two:sided}
|\rho(X\mid D)-\rho(X)|\leq\frac12\II[X:D].
\end{equation}
\end{lemma}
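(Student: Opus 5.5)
The plan is to prove the three assertions of Lemma~\ref{lem:KL:conditioning} in order, with the lower bound in Eq.~\eqref{eq:KL:projection:conditioning} coming from convexity and the upper bound from a chain-rule decomposition of KL divergence. First, $J_{\mathcal C}$ is convex: given $P=\sum_i\lambda_iP_i$ and near-optimal $Q_i\in\mathcal C$, joint convexity of $\KL$ gives $\KL(P\,\|\,\sum_i\lambda_iQ_i)\leq\sum_i\lambda_i\KL(P_i\,\|\,Q_i)$, and $\sum_i\lambda_iQ_i\in\mathcal C$ by convexity of $\mathcal C$. Writing $P_d:=\operatorname{Law}(X\mid D=d)$ and $P:=\operatorname{Law}(X)=\sum_d\Pr(D=d)P_d$, this yields $J_{\mathcal C}(P)\leq\E_DJ_{\mathcal C}(P_D)$, which is the left inequality.

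For the right inequality I fix any $Q\in\mathcal C$ with $\KL(P\,\|\,Q)<\infty$; such $Q$ exist because $\mathcal C$ contains a full-support law. Then I use the identity
\[
\E_D\KL(P_D\,\|\,Q)=\KL(P\,\|\,Q)+\E_D\KL(P_D\,\|\,P)=\KL(P\,\|\,Q)+\II[X:D],
\]
which follows by expanding $\log(P_D/Q)=\log(P_D/P)+\log(P/Q)$ and averaging over $D$. Since $J_{\mathcal C}(P_d)\leq\KL(P_d\,\|\,Q)$ for each $d$, averaging and then taking the infimum over $Q$ gives $\E_DJ_{\mathcal C}(P_D)\leq J_{\mathcal C}(P)+\II[X:D]$.

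For Eq.~\eqref{eq:KL:two:sided}, I set $\rho=J_{\mathcal C}+\frac12\HH-c$. Then
\[
\rho(X\mid D)-\rho(X)=\bigl(\E_DJ_{\mathcal C}(P_D)-J_{\mathcal C}(P)\bigr)-\tfrac12\II[X:D],
\]
because $\HH[X\mid D]-\HH[X]=-\II[X:D]$ and the constant $c$ cancels. By Eq.~\eqref{eq:KL:projection:conditioning} the bracketed term lies in $[0,\II[X:D]]$, so the difference lies in $[-\frac12\II[X:D],\frac12\II[X:D]]$.

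Continuity then follows from the continuity part of Lemma~\ref{lem:penalty:core}, applied to the real-valued function $\rho$ on $\Omega$; that argument uses only Eq.~\eqref{eq:core:two:sided}. Real-valuedness holds because $J_{\mathcal C}(P)\leq\KL(P\,\|\,Q_0)\leq\log(1/\min_\omega Q_0(\omega))$ for a fixed full-support $Q_0\in\mathcal C$. Since $\HH$ is continuous, $J_{\mathcal C}=\rho-\frac12\HH+c$ is continuous as well. The main obstacle is the finiteness bookkeeping: the chain-rule identity requires $\KL(P\,\|\,Q)<\infty$. This holds when $\operatorname{supp}P\subseteq\operatorname{supp}Q$, and then also $\operatorname{supp}P_d\subseteq\operatorname{supp}Q$, so restricting the infimum to such $Q$ loses nothing.
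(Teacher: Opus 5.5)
Your proof is correct and follows essentially the paper's route. Convexity of $J_{\mathcal C}$, from joint convexity of KL, gives the lower bound. Your identity $\E_D\KL(P_D\,\|\,Q)=\KL(P\,\|\,Q)+\II[X:D]$ is the affineness in $P$ of the cross-entropy $\KL(P\,\|\,Q)+\HH[P]$, which the paper packages as concavity of $J_{\mathcal C}+\HH$ (after an $\epsilon$-perturbation to full-support $Q$) followed by Jensen. Restricting to $Q$ with $\operatorname{supp}P\subseteq\operatorname{supp}Q$ handles finiteness slightly more directly than the perturbation does, and your continuity step via Lemma~\ref{lem:penalty:core} matches the paper's.
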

\begin{proof}
The full-support member makes $J_{\mathcal C}$ finite everywhere.
Joint convexity of KL divergence and convexity of $\mathcal C$ imply
that $J_{\mathcal C}$ is convex: for a finite mixture
$P=\sum_{i=1}^m p_iP_i$, mix reference laws whose divergences approach
$J_{\mathcal C}(P_i)$ and apply the log-sum inequality.

Fix a full-support $Q_0\in\mathcal C$.
Replacing $Q\in\mathcal C$ by $(1-\epsilon)Q+\epsilon Q_0$ and letting
$\epsilon\downarrow0$ leaves the infimum unchanged.
Thus, if $\mathcal C^+$ is the family of full-support members, then
\begin{equation}\label{eq:KL:cross:entropy}
J_{\mathcal C}(P)+\HH[P]
=\inf_{Q\in\mathcal C^+}-\sum_{i=1}^qP(\omega_i)\log Q(\omega_i).
\end{equation}
The right side is an infimum of affine functions of $P$, hence is
concave. Jensen's inequality for the convex functional $J_{\mathcal C}$
gives the lower bound in Eq.~\eqref{eq:KL:projection:conditioning}.
Jensen's inequality for the concave functional $J_{\mathcal C}+\HH$
gives the upper bound.

For $\rho=J_{\mathcal C}+\HH/2-c$, the two functionals in
Lemma~\ref{lem:penalty:core} are
$\Phi_+=2(J_{\mathcal C}+\HH)-2c$ and $\Phi_-=2c-2J_{\mathcal C}$.
Both are concave. That lemma gives Eq.~\eqref{eq:KL:two:sided}
and continuity of $\rho$. Since entropy is continuous,
$J_{\mathcal C}=\rho-\HH/2+c$ is continuous as well.
\end{proof}

We use the averaged KL penalty of Liao~\cite{l24} in a single expression.
Its subgroup values record the densest coset of the reference set.

\begin{definition}[KL covering penalty]\label{def:KL:covering}
Suppose $A\subseteq G$ is nonempty. Define
$\mathcal C_A:=\{\cU_A+T:T\text{ is a law on }G\}$, using independent sums,
and
\[
\tau_A(P):=\inf_{Q\in\mathcal C_A}\KL(P\,\|\,Q)
+\frac12\HH[P]-\frac12\log|A|.
\]
For $V\leq G$, define $m_A(V):=\max_{t\in G}|A\cap(V+t)|$.
\end{definition}

The projected-KL lemma verifies the penalty hypotheses.
The subgroup calculation is Liao's dense-coset identity~\cite[Lemma~11]{l24}.

\begin{lemma}[KL penalty and subgroup values]\label{lem:KL:covering}
For every nonempty $A\subseteq G$, the penalty $\tau_A$ in
Definition~\ref{def:KL:covering} is two-sided-admissible and
$\tau_A(\cU_A)=0$. For every $V\leq G$,
\begin{equation}\label{eq:KL:subgroup:value}
2\tau_A(\cU_V)=\log\frac{|A||V|}{m_A(V)^2}.
\end{equation}
\end{lemma}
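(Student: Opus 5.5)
Two-sided admissibility follows directly from Lemma~\ref{lem:KL:conditioning}. The subgroup formula reduces to an explicit KL projection onto coset mixtures.

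\emph{Admissibility.} Apply Lemma~\ref{lem:KL:conditioning} with $\Omega=G$, $\mathcal C=\mathcal C_A$ and $c=\tfrac12\log|A|$. The family $\mathcal C_A$ is convex, because $T\mapsto \cU_A+T$ is affine in the law of $T$. It contains a full-support law, for example $\cU_A+\cU_G=\cU_G$. The lemma then gives the two-sided bound $|\tau_A(X\mid D)-\tau_A(X)|\le\tfrac12\II[X:D]$ and continuity. Translation invariance holds because $\mathcal C_A$ is closed under translation (replace $T$ by $T+s$), KL divergence is invariant under simultaneous translation, and so is entropy. Lemma~\ref{lem:penalty:core} then makes $\tau_A$ two-sided-admissible. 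For $\tau_A(\cU_A)=0$, take $T=\delta_0$: this gives $\KL(\cU_A\|\cU_A)=0$, and the remaining terms $\tfrac12\log|A|-\tfrac12\log|A|$ cancel.

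\emph{Subgroup value.} By Eq.~\eqref{eq:KL:cross:entropy}, $J(\cU_V)+\HH[\cU_V]=\inf_Q -\frac1{|V|}\sum_{v\in V}\log Q(v)$ over $Q=\cU_A*T$. Hence
\[
2\tau_A(\cU_V)=2\inf_Q\Big(-\tfrac1{|V|}\sum_{v\in V}\log Q(v)\Big)-\log|V|-\log|A|.
\]
It therefore suffices to show that $\sup_Q \frac1{|V|}\sum_{v\in V}\log Q(v)=\log\frac{m_A(V)}{|A||V|}$.

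For the upper bound, concavity of $\log$ (Jensen) gives $\frac1{|V|}\sum_{v}\log Q(v)\le\log\frac{Q(V)}{|V|}$. Moreover
\[
Q(V)=\sum_t T(t)\,\frac{|A\cap(V+t)|}{|A|}\le \frac{m_A(V)}{|A|}.
\]
For the lower bound, we need a $T$ making $Q$ uniform on $V$ with total mass $m_A(V)/|A|$. Take $t_0$ attaining $m_A(V)$ and let $T$ be uniform on $t_0+V$. Then $Q(v)=\Pr(a+t_0+w=v)$ with $w$ uniform on $V$, so $Q(v)=\frac{|A\cap(V+t_0)|}{|A|\,|V|}$, which is constant on $V$. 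Hence the supremum is attained and equals $\log\frac{m_A(V)}{|A||V|}$. Substituting gives
\[
2\tau_A(\cU_V)=-2\log\frac{m_A(V)}{|A||V|}-\log|V|-\log|A|=\log\frac{|A||V|}{m_A(V)^2},
\]
which is Eq.~\eqref{eq:KL:subgroup:value}.

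The main point needing care is the Jensen and mass step in the upper bound, specifically that the mass of $\cU_A+T$ on $V$ is at most $m_A(V)/|A|$. Support issues cause no trouble: $\cU_V$ has finite divergence from $\cU_G\in\mathcal C_A$, and terms with $Q(v)=0$ only make the infimum in Eq.~\eqref{eq:KL:cross:entropy} larger, so the full-support restriction can be dropped.
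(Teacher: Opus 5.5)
Your proposal is correct and follows the paper's proof. Admissibility comes from Lemma~\ref{lem:KL:conditioning} together with translation invariance and Lemma~\ref{lem:penalty:core}, and the subgroup value comes from the mass bound $Q(V)\le m_A(V)/|A|$ plus Jensen on $V$, with equality at $T$ uniform on the coset of $V$ attaining $m_A(V)$. The only difference is that you route through the cross-entropy identity, and the full-support caveat there is unnecessary: $\KL(\cU_V\,\|\,Q)+\HH[\cU_V]=-\frac1{|V|}\sum_{v\in V}\log Q(v)$ holds for every $Q\in\mathcal C_A$ (with value $+\infty$ when $Q$ vanishes on $V$) directly from the definition of $J_{\mathcal C_A}$.
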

\begin{proof}
The family $\mathcal C_A$ is convex, contains $\cU_G$, and is invariant
under translations. Thus $\tau_A$ is translation invariant, and
Lemma~\ref{lem:KL:conditioning} gives two-sided conditioning.
Lemma~\ref{lem:penalty:core} now gives two-sided admissibility.
At $P=\cU_A$, the KL infimum is zero by nonnegativity and the choice
$T\equiv0$. The entropy terms cancel, so $\tau_A(\cU_A)=0$.

Fix $V\leq G$ and set $m:=m_A(V)>0$.
For $Q=\operatorname{Law}(\cU_A+T)$, averaging over $T$ gives
\begin{equation}\label{eq:KL:coset:mass}
Q(V)=\E_T\frac{|A\cap(V-T)|}{|A|}\leq\frac m{|A|}.
\end{equation}
\begin{samepage}
We have
\begin{equation}\label{eq:KL:subgroup:lower}
\KL(\cU_V\,\|\,Q)\geq-\log Q(V)\geq\log(|A|/m).
\end{equation}
The first step is Jensen's inequality on $V$, with the inequality
automatic if $Q$ vanishes at a point of $V$.
The last step follows from Eq.~\eqref{eq:KL:coset:mass}.
\par
\end{samepage}
Choose $t_0$ attaining $|A\cap(V+t_0)|=m$ and take $T$ uniform on
$V-t_0$. Then $Q(v)=m/(|A||V|)$ for every $v\in V$, so equality holds
in Eq.~\eqref{eq:KL:subgroup:lower}.
Consequently $J_{\mathcal C_A}(\cU_V)=\log(|A|/m)$.
Substitution into Definition~\ref{def:KL:covering} proves
Eq.~\eqref{eq:KL:subgroup:value}.
\end{proof}

The diagonal inverse bound supplies a dense coset through
Lemma~\ref{lem:KL:covering}; the dense-slice covering lemma then gives
the required cover.

\begin{theorem}[Main upper bound, formal version of
Theorem~\ref{thm:main}]\label{thm:main:formal}
Suppose $n$ is a positive integer, $A\subseteq\F_2^n$ is nonempty, and
$K\geq1$ satisfies $|A+A|\leq K|A|$.
Define $C_0:=1057363397/200000000$.
Then there is a subspace $H\leq\F_2^n$ with $|H|\leq|A|$ such that $A$ is
covered by at most $2K^{C_0}$ translates of $H$.
\end{theorem}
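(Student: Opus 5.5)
The plan is to combine the diagonal inverse bound of Lemma~\ref{lem:diagonal:refined} (with $N=1$) with the KL covering penalty of Definition~\ref{def:KL:covering}, and then to invoke Lemma~\ref{lem:slice:cover}.

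\emph{Setting up the penalty.} Set $\tau:=\tau_A$. By Lemma~\ref{lem:KL:covering}, $\tau_A$ is two-sided-admissible and $\tau_A(\cU_A)=0$. Lemma~\ref{lem:diagonal:refined} therefore gives $\mathsf D_{\tau_A}(c)$. Apply it to $X:=\cU_A$ to obtain
\[
M_{\tau_A}\leq 2\tau_A(\cU_A)+c\,\dist[\cU_A;\cU_A]=c\,\dist[\cU_A;\cU_A].
\]
If $A'$ is an independent copy, then $\cU_A-\cU_A'$ takes values in $A+A$. Hence $\HH[\cU_A-\cU_A']\leq\log|A+A|\leq\log K+\log|A|$, which gives $\dist[\cU_A;\cU_A]\leq\log K$ and so $M_{\tau_A}\leq c\log K$.

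\emph{Extracting a dense coset.} Choose $V\leq G$ attaining $M_{\tau_A}=2\tau_A(\cU_V)$. Eq.~\eqref{eq:KL:subgroup:value} turns the bound into
\[
\log\frac{|A||V|}{m_A(V)^2}\leq c\log K,
\]
that is, $m_A(V)^2\geq K^{-c}|A||V|$. Let $t$ attain $m:=m_A(V)$. Since $m\leq\min\{|A|,|V|\}$, we have
\[
\max\{|A|,|V|\}\cdot m\geq m^2\geq K^{-c}|A||V|.
\]
Dividing by $\min\{|A|,|V|\}\cdot\max\{|A|,|V|\}=|A||V|$ does not directly give density against the maximum, so I will argue more carefully. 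From $m\leq\min\{|A|,|V|\}$,
\[
m\geq\frac{K^{-c}|A||V|}{m}\geq\frac{K^{-c}|A||V|}{\min\{|A|,|V|\}}=K^{-c}\max\{|A|,|V|\}.
\]
Thus $|A\cap(V+t)|\geq\max\{|A|,|V|\}/R$ with $R:=K^{c}\geq1$.

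\emph{Conclusion.} Lemma~\ref{lem:slice:cover} then provides a subspace $H$ with $|H|\leq|A|$ such that $A$ is covered by at most $2K\cdot K^{c}=2K^{1+c}=2K^{C_0}$ translates of $H$.

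\emph{Main obstacle.} All of the real difficulty sits in Lemma~\ref{lem:diagonal:refined}, which is assumed here. At this level, the only point needing care is checking that the hypotheses line up. In particular, $G$ must be the ambient $\F_2^n$, and the minimiser $V$ must exist; it does, because there are finitely many subspaces. The case $K=1$ is also harmless, since then $M_{\tau_A}\leq0$ and $A$ is a coset.
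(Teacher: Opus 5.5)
Your proposal is correct and follows the paper's proof essentially step for step: you use $\tau_A$ with $\tau_A(\cU_A)=0$, obtain $\mathsf D_{\tau_A}(c)$ from Lemma~\ref{lem:diagonal:refined}, bound $\dist[\cU_A;\cU_A]\leq\log K$, apply the subgroup formula at a minimizing $V$, and finish with Lemma~\ref{lem:slice:cover} at $R=K^{c}$. Your derivation of $m\geq K^{-c}\max\{|A|,|V|\}$ from $m\leq\min\{|A|,|V|\}$ is just the multiplicative form of the paper's step of adding the nonnegative quantity $\log(\min\{|A|,|V|\}/m)$, and the abandoned intermediate line does no harm.
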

\begin{proof}
Use the penalty $\tau_A$ from Definition~\ref{def:KL:covering}.
Lemma~\ref{lem:KL:covering} makes it two-sided-admissible and gives
$\tau_A(\cU_A)=0$.
Hence Lemma~\ref{lem:diagonal:refined} applies.
Choose $V\leq G$ attaining $M_{\tau_A}$, and set
$d:=\dist[\cU_A;\cU_A]$ and $m:=m_A(V)$.
The diagonal inverse bound gives
\begin{equation}\label{eq:KL:bound}
M_{\tau_A}\leq cd.
\end{equation}
Since $\cU_A+\cU_A$ is supported on $A+A$,
\begin{samepage}
We have
\begin{equation}\label{eq:uniform:self:distance}
d=\HH[\cU_A+\cU_A]-\log|A|
\leq\log(|A+A|/|A|)\leq\log K.
\end{equation}
The first step is Definition~\ref{def:ruzsa} for the uniform law.
The second step bounds entropy by the logarithm of the support size.
The last step uses the doubling hypothesis.
\par
\end{samepage}

Since $m\leq\min\{|A|,|V|\}$, the logarithm
$\log(\min\{|A|,|V|\}/m)$ is nonnegative.
\begin{samepage}
We have
\begin{equation}\label{eq:KL:dense:budget}
\log\frac{\max\{|A|,|V|\}}m
\leq\log\frac{|A||V|}{m^2}
=M_{\tau_A}\leq cd\leq c\log K.
\end{equation}
The first step adds this nonnegative logarithm.
The second step is the subgroup identity in Lemma~\ref{lem:KL:covering}
at the minimizing $V$.
The third step uses Eq.~\eqref{eq:KL:bound}.
The last step uses Eq.~\eqref{eq:uniform:self:distance}.
\par
\end{samepage}
Thus a coset $V+t$ attaining $m$ satisfies
\begin{equation}\label{eq:dense:slice}
|A\cap(V+t)|\geq K^{-c}\max\{|A|,|V|\}.
\end{equation}
Apply Lemma~\ref{lem:slice:cover} with $R:=K^{c}$.
It gives a subspace $H$ with $|H|\leq|A|$ and a cover by at most
$2KR=2K^{1+c}=2K^{C_0}$ translates of $H$, as required.
\end{proof}

%% file: 90_llm_disclaimer.tex
\section*{GenAI Usage Disclosure}

Generative AI tools, including Codex, assisted with mathematical exploration,
proof development, and manuscript preparation.

%% file: _3_app.tex
\clearpage
\appendix
\input{32_two_law}
\clearpage
\input{33_coefficients}

%% file: 32_two_law.tex
\section{The two-law entropic bound}\label{app:two:law}

This appendix proves the two-law entropic bound in
Corollary~\ref{cor:entropic:general}. We first establish a general inverse
bound, then refine it using the main diagonal and cyclic estimates.

\subsection{Simultaneous comparison and scaling}

The finite construction in Appendix~\ref{app:coefficients} supplies the
coefficients for the simultaneous comparison.
Lemma~\ref{lem:finite:validity} gives the common lower bound for each cost.

\begin{lemma}[Conditional simultaneous improvements]\label{lem:outer:closing}
Suppose $\tau$ is two-sided-admissible and, for every $N\geq1$,
$\mathsf G_{\tau^{[N]}}(a)$, $\mathsf D_{\tau^{[N]}}(b)$, and
$\mathsf B_{j,\tau^{[N]}}(1)$ hold for $0\leq j\leq10$.
Then, for every $N\geq1$, $\mathsf G_{\tau^{[N]}}(a')$,
$\mathsf D_{\tau^{[N]}}(b')$, and
$\mathsf B_{j,\tau^{[N]}}(\theta)$ hold for all such $j$.
\end{lemma}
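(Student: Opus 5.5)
The plan mirrors the proof of Lemma~\ref{lem:diagonal:comparison}, now with three families of conclusions instead of two. We first work with $\tau$ on $G$ and then transfer to chain powers. The three quantities to improve are
\[
L_{\mathrm{gen}}:=M_\tau-\tau(X)-\tau(Y),\qquad
L_{\mathrm{diag}}:=M_\tau-2\tau(X),\qquad
L_{\mathrm{br}}:=\eta(M_\tau-2\tau(A\mid C)).
\]
Their target budgets are $a'd$, $b'\dist[X;X]$ and $\theta(u_jx+v_js)$, respectively. Each target will be written as an exact nonnegative rational combination of valid rows, with the weights on the inverse and quotient costs summing to one. The finite construction of Appendix~\ref{app:coefficients} (Definition~\ref{def:finite:general} together with its row definitions) supplies these combinations, and Lemma~\ref{lem:finite:feasibility} guarantees that they are defined.

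\textbf{Step 1: admissible rows.} For the outer general comparison we use Definition~\ref{def:outer:copies} with eight copies of each law, and for the outer diagonal comparison Definition~\ref{def:outer:diagonal}. The entropy rows come from Eq.~\eqref{eq:outer:shannon} and from additivity. The penalty rows come from the seven rules of Lemma~\ref{lem:outer:rules}.

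The inverse rows are obtained as follows. Whenever two states have posterior ensembles that match under a law-preserving permutation (Lemma~\ref{lem:ensemble:flags}), Eq.~\eqref{eq:ensemble:pair} holds. We use it with $b$ taken from $\mathsf D_{\tau^{[N]}}(b)$ on every chain power, through Lemma~\ref{lem:inverse:ensemble}. For mixed-law pairs we use $\mathsf G_\tau(a)$ directly at each pair of conditioning values and then average.

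The quotient rows come from Lemma~\ref{lem:cyclic:substitution}, fed by the hypotheses $\mathsf B_{j,\tau}(1)$ after dividing by $\eta$, that is, with $(a,b)=(u_j/\eta,v_j/\eta)$. For the branch comparison we use the inner copies of Definition~\ref{def:cyclic:copies}, together with Lemma~\ref{lem:copy:normalization}, Lemma~\ref{lem:copy:rules}, and the inner quotients of Definition~\ref{def:inner:quotient}. Lemma~\ref{lem:finite:validity} certifies that every row listed in the finite families is nonnegative under the present hypotheses, including rows whose inverse coefficient lies in an interval of weaker caps, which follow from the stated ones because $\dist\geq0$.

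\textbf{Step 2: the identities.} For each of the $1+1+11$ targets we take the selected identity:
\[
a'd-L_{\mathrm{gen}}=\sum_r w_r\mathcal J_r,\qquad
b'd-L_{\mathrm{diag}}=\sum_r w'_r\mathcal J'_r,\qquad
\theta(u_jx+v_js)-L_{\mathrm{br}}=\sum_r w_{jr}\mathcal I_{jr}.
\]
Here all auxiliary entropy and penalty coefficients cancel, as checked by the exact cone test of Definition~\ref{def:finite:cone}. The constant $M_\tau$ appears with total mass one on the inverse and quotient rows, which is what allows it to match the left sides. Nonnegativity of the right sides then yields $\mathsf G_\tau(a')$, $\mathsf D_\tau(b')$ and $\mathsf B_{j,\tau}(\theta)$. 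These remain valid when $d=0$, and under extra finite conditioning by pointwise averaging.

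\textbf{Step 3: chain powers.} We repeat the argument for $\rho:=\tau^{[N]}$, which is two-sided-admissible by Lemma~\ref{lem:penalty:chain}. Since $\rho^{[m]}=\tau^{[Nm]}$, all hypotheses, including the ensemble premises required by Lemma~\ref{lem:inverse:ensemble}, hold for every chain power of $\rho$. The conclusions therefore hold for every $N$.

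The main obstacle is Step~2: the existence and exactness of the rational certificates, whose nontrivial content is entirely in Appendix~\ref{app:coefficients}. Conceptually, the delicate point is the general comparison. There the inverse rows between mismatched $X$/$Y$ posteriors must use $\mathsf G(a)$, while only matched ensembles may use the sharper $\mathsf D(b)$. We must check that every matched pair respects the separate $X$/$Y$ permutation constraint of Lemma~\ref{lem:ensemble:flags}, so that the global interchange symmetry $\mathrm{sym}$ is not used illegitimately.
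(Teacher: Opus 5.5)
Your proposal is correct and follows the paper's proof. The paper fixes $N$ and sets $\rho=\tau^{[N]}$, with the hypotheses holding on all further chain powers. It takes the thirteen identities selected in Definition~\ref{def:finite:general} and certifies every row by Lemma~\ref{lem:finite:validity}: general costs use $a$, matched ensembles use $b$, and quotient costs use the eleven branch hypotheses. It then reads off the conclusions from nonnegativity, exactly as you do.

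The only differences are cosmetic. The paper normalizes the outer targets as $\eta a'd-L$ and $\eta b'd-L$, with $L=\eta(M_\rho-\tauzero)$ and $\epsilon=\eta$, and your outer identities are these divided by $\eta$. The paper also uses the complete weighted-union mixture rows with coefficient $b$. You did not list these, but your appeal to Lemma~\ref{lem:finite:validity} covers them.
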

\begin{proof}
Fix $N$ and set $\rho:=\tau^{[N]}$.
Lemma~\ref{lem:penalty:chain} supplies the hypotheses on all further
chain powers of $\rho$.
Use the thirteen identities selected by
Definition~\ref{def:finite:general}; their existence is
Lemma~\ref{lem:finite:feasibility}.

For an inner diagram set $L:=\eta(M_\rho-2\rho(A\mid C))$.
For an outer diagram set $L:=\eta(M_\rho-\tauzero)$, using
Definition~\ref{def:outer:copies} in the general case and
Definition~\ref{def:outer:diagonal} in the diagonal case.
Lemma~\ref{lem:finite:validity} makes every generated row nonnegative.
In particular, general costs use the coefficient $a$.
Matching ensembles and complete weighted unions use the coefficient $b$.
Quotient costs use the eleven branch hypotheses directly.

The exact cone identities express each of
$\theta(u_jx+v_js)-L$, $\eta a'd-L$, and $\eta b'd-L$
as a nonnegative rational combination of these rows in its corresponding
diagram. The first family proves
$\mathsf B_{j,\rho}(\theta)$.
Dividing the last two inequalities by $\eta$ proves
$\mathsf G_\rho(a')$ and $\mathsf D_\rho(b')$.
The construction is uniform in $N$ and remains valid under further
finite conditioning.
\end{proof}

One scale parameter closes all the bounds on every chain power.

\begin{lemma}[Simultaneous inverse and branch bounds]\label{lem:penalty:improved}
Suppose $\tau$ is a two-sided-admissible penalty on $G$. Then, for every
$N\geq1$, $\mathsf G_{\tau^{[N]}}(a')$,
$\mathsf D_{\tau^{[N]}}(b')$, and
$\mathsf B_{j,\tau^{[N]}}(\theta)$ hold for $0\leq j\leq10$.
\end{lemma}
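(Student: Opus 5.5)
The argument will copy the proof of Lemma~\ref{lem:diagonal:refined}, now using Lemma~\ref{lem:outer:closing} as the strict improvement step and Lemma~\ref{lem:finite:scaling} to iterate it.

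\textbf{The scaled hypothesis.} For $\lambda\geq1$, let $\mathsf H'(\lambda)$ be the statement that, for every $N\geq1$, the following hold:
\begin{itemize}
\item $\mathsf G_{(\tau/\lambda)^{[N]}}(a)$;
\item $\mathsf D_{(\tau/\lambda)^{[N]}}(b)$;
\item $\mathsf B_{j,(\tau/\lambda)^{[N]}}(1)$ for $0\leq j\leq10$.
\end{itemize}
By Lemma~\ref{lem:penalty:chain}, $(\tau/\lambda)^{[N]}=\tau^{[N]}/\lambda$ and $M_{\tau^{[N]}/\lambda}=M_{\tau^{[N]}}/\lambda$. So each left-hand side ($M_\rho-\tau(X)-\tau(Y)$, $M_\rho-2\rho(X)$, $\eta(M_\rho-2\rho(A\mid C))$) is homogeneous of degree one in $\rho$. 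The budgets $a\,\dist[X;Y]$, $b\,\dist[X;X]$ and $u_jx+v_js$ do not depend on $\rho$ and are nonnegative, provided $u_j,v_j\geq0$; this is part of Definition~\ref{def:finite:general}. Hence the family, indexed over all chain powers, all input laws and all cyclic laws, satisfies the hypotheses of Lemma~\ref{lem:finite:scaling}.

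\textbf{Initialization.} Liao~\cite[Lemma~7]{l24}, applied with both penalties equal to $\tau^{[N]}$ (which is two-sided-admissible by Lemma~\ref{lem:penalty:chain}), gives $\mathsf G_{\tau^{[N]}}(8)$. Taking $Y=X$ gives $\mathsf D_{\tau^{[N]}}(8)$. Lemma~\ref{lem:branch:initial} with $\kappa_0=8$ gives
\[
\eta(M_{\tau^{[N]}}-2\tau^{[N]}(A\mid C))\leq8\eta(x+s).
\]
Let $\Lambda:=\max\{8/a,\,8/b,\,8\eta/u_j,\,8\eta/v_j\}$, which is finite as long as every $u_j,v_j>0$. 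Since $a,b\approx5$, $\Lambda\geq1$ at least up to rounding, and we may enlarge $\Lambda$ if needed. Then all bounds hold at scale $\Lambda$, that is, $\mathsf H'(\Lambda)$ holds.

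\textbf{Iteration and conclusion.} Lemma~\ref{lem:outer:closing} applied to $\tau/\lambda$ turns $\mathsf H'(\lambda)$ into the same bounds with factors $a'/a$, $b'/b$ and $\theta$. To apply Lemma~\ref{lem:finite:scaling} we need one common $q<1$. Set $q:=\max\{a'/a,\,b'/b,\,\theta\}$, and check from Definition~\ref{def:copy:coefficients} that $a'<a$ and $b'<b$:
\begin{itemize}
\item $a'=5.051274<5.051279389241=a$;
\item $b'=4.939729<4.939733470054=b$.
\end{itemize}
Lemma~\ref{lem:finite:scaling} then gives $\mathsf H'(1)$. One further application of Lemma~\ref{lem:outer:closing} at scale one yields $\mathsf G_{\tau^{[N]}}(a')$, $\mathsf D_{\tau^{[N]}}(b')$ and $\mathsf B_{j,\tau^{[N]}}(\theta)$ for every $N$, which is the claim.

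\textbf{Main obstacle.} The delicate step is initialization: it requires the selected pairs $(u_j,v_j)$ to be strictly positive, so that $8\eta(x+s)$ is dominated by a finite multiple of $u_jx+v_js$. I would verify this from the rational construction in Definition~\ref{def:finite:general}. If some $u_j$ or $v_j$ vanished, I would instead initialize that branch through the corresponding source bound given by the convex reduction, Lemma~\ref{lem:cyclic:convex}.
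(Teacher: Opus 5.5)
Your proposal is correct and matches the paper's proof: the same scaled hypothesis $\mathsf H(\lambda)$, initialization from Liao's coefficient $8$ and Lemma~\ref{lem:branch:initial}, iteration of Lemma~\ref{lem:outer:closing} through Lemma~\ref{lem:finite:scaling}, and one final application at scale one. The differences are cosmetic. The paper fixes $\Lambda=4$ via $8\leq4\min\{a,b\}$ and $8\eta\leq4\min\{u_j,v_j\}$; your ``main obstacle'' is settled by the grid constraint $u_j,v_j\geq2/5$ in Definition~\ref{def:finite:general}, so no fallback is needed. The paper also checks $\max\{a'/a,b'/b\}<\theta$, so the common factor is $\theta$ itself, a check your choice $q=\max\{a'/a,b'/b,\theta\}$ avoids.
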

\begin{proof}
For $\lambda\geq1$, let $\mathsf H(\lambda)$ mean that all unprimed
hypotheses of Lemma~\ref{lem:outer:closing} hold for $\tau/\lambda$
on every chain power.
Liao~\cite[Lemma~7]{l24} gives both inverse bounds at coefficient $8$.
Lemma~\ref{lem:branch:initial} gives the initial cyclic budget
$8\eta(x+s)$. The constants and the grid in
Definition~\ref{def:finite:general} satisfy
\begin{align}
8&\leq4\min\{a,b\},\label{eq:scaling:initial:inverse}\\
8\eta&\leq4\min\{u_j,v_j\},\label{eq:scaling:initial:shapes}\\
\max\{a'/a,b'/b\}&<\theta<1.\label{eq:scaling:common:ratio}
\end{align}
The first and third assertions follow by rational arithmetic in
Definition~\ref{def:copy:coefficients}.
The second follows from $u_j,v_j\geq2/5$ and $\eta=2/11$.
Dividing the initial bounds by four proves $\mathsf H(4)$.

Apply Lemma~\ref{lem:finite:scaling} to the two inverse budgets
$a\dist[X;Y]$, $b\dist[X;X]$, and the eleven cyclic budgets
$u_jx+v_js$ on every chain power.
Lemma~\ref{lem:penalty:chain} gives homogeneity.
Lemma~\ref{lem:outer:closing} and
Eq.~\eqref{eq:scaling:common:ratio} give the common factor $\theta$.
Thus $\mathsf H(1)$ holds.
One final application of Lemma~\ref{lem:outer:closing} gives the result.
\end{proof}

\subsection{Refinement from the diagonal and cyclic bounds}

The main diagonal comparison supplies stronger inputs for a second
six-copy comparison. Define
\begin{equation}\label{eq:aux:coefficients}
a_0:=\frac{4793}{1000},\qquad
 a_1:=\frac{4792457}{1000000},\qquad
 q:=\frac{a_1}{a_0}=\frac{4792457}{4793000}<1.
\end{equation}
The finite construction in Definition~\ref{def:finite:auxiliary} selects
a diagonal coefficient tuple and one additional general identity.

\begin{lemma}[Cyclic bounds for an accepted tuple]\label{lem:aux:cyclic}
Suppose $S=((\widehat\alpha_j,\widehat\beta_j))_{j=0}^{83}$ passes
all tests in Definition~\ref{def:finite:diagonal}.
For every two-sided-admissible penalty $\tau$ and every $N\geq1$,
$\mathsf D_{\tau^{[N]}}(c)$ holds. Every cyclic law on $G^N$ satisfies
\begin{equation}\label{eq:aux:cyclic}
M_{\tau^{[N]}}-2\tau^{[N]}(A\mid C)
\leq\widehat\alpha_jx+\widehat\beta_js
\qquad(0\leq j\leq83).
\end{equation}
\end{lemma}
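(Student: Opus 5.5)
The plan is to rerun the argument of Lemmas~\ref{lem:diagonal:comparison} and~\ref{lem:diagonal:refined} with the selected pairs $(\alpha_j,\beta_j)$ replaced by the accepted tuple $S$. The only input those two lemmas take from the specific choice of pairs is that the pairs pass the tests of Definition~\ref{def:finite:diagonal}. Passing these tests should mean exactly four things:
\begin{enumerate}
\item The 85 rational cone identities exist, with nonnegative weights and total cost mass one.
\item Every licensed source pair lies above a convex combination of the tuple's pairs.
\item The inner inverse coefficients lie in $[B,B_0]$.
\item The initialization bounds $1\leq\widehat\alpha_j$ and $1\leq\widehat\beta_j$ hold.
\end{enumerate}
So I would first record that these properties, which Lemma~\ref{lem:finite:feasibility} verifies for the selected tuple, are precisely the content of acceptance.

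Next, I would define the hypothesis $\widehat{\mathsf C}_{j,\rho}(q)$ as in Definition~\ref{def:closed:hypotheses}, but with $(\widehat\alpha_j,\widehat\beta_j)$ in place of $(\alpha_j,\beta_j)$. I would then prove the analogue of Lemma~\ref{lem:diagonal:comparison} verbatim: the hypotheses $\mathsf D_{\tau^{[N]}}(B)$ and $\widehat{\mathsf C}_{j,\tau^{[N]}}(1)$ for all $N$ imply $\mathsf D_{\tau^{[N]}}(c)$ and $\widehat{\mathsf C}_{j,\tau^{[N]}}(\vartheta)$. The proof uses three ingredients:
\begin{itemize}
\item Lemma~\ref{lem:cyclic:convex}, applied to the tuple $S$, supplies the source bounds.
\item Lemma~\ref{lem:finite:validity} makes every row nonnegative. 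Its validity depends only on the hypotheses, not on which tuple was selected.
\item Lemma~\ref{lem:penalty:chain} passes the hypotheses to all further chain powers of $\rho=\tau^{[N]}$.
\end{itemize}
The outer identity $cd-L_{\mathrm{out}}$ also belongs to the accepted system, so $\mathsf D(c)$ follows with the same $c$.

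Then I would initialize and scale exactly as in Lemma~\ref{lem:diagonal:refined}. Liao's Lemma~7 together with Lemma~\ref{lem:branch:initial} gives $\mathsf D_{\tau^{[N]}}(8)$ and the initial budget $8(x+s)$. Since $1\leq B$ and $1\leq\widehat\alpha_j,\widehat\beta_j$, dividing by $8$ yields $\mathsf H(8)$. Lemma~\ref{lem:finite:scaling} with factor $\vartheta$ then reaches scale one, and one more comparison gives $\mathsf D_{\tau^{[N]}}(c)$ and $\widehat{\mathsf C}_{j}(\vartheta)$. Because $\vartheta<1$ and $x,s\geq0$, this implies Eq.~\eqref{eq:aux:cyclic}.

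The main obstacle is the first step: confirming that the tests in Definition~\ref{def:finite:diagonal} include the initialization inequalities $\widehat\alpha_j,\widehat\beta_j\geq1$ and the $[B,B_0]$ range condition, rather than these being properties checked only for the specific selected tuple. If the initialization test is missing, the fallback is to start from a larger scale, say $\Lambda=8/\min_j\min\{1,\widehat\alpha_j,\widehat\beta_j\}$. This is finite provided all coordinates are positive, and I would extract that positivity from the cone identities, since a zero coefficient there would force a negative weight.
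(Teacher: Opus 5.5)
Your proposal is correct and follows the paper's proof: rerun the argument of Lemma~\ref{lem:diagonal:comparison} for $S$, initialize all hypotheses at scale eight using Liao's coefficient $8$ and Lemma~\ref{lem:branch:initial}, reach scale one with Lemma~\ref{lem:finite:scaling} and the factor $\vartheta$, and apply one final comparison. The obstacle you flag does not arise, because Definition~\ref{def:finite:diagonal} builds both properties into acceptance. Its grid condition \eqref{eq:finite:diagonal:grid} forces $1\leq\widehat\alpha_j\leq6$ and $1\leq\widehat\beta_j\leq250$ for every enumerated tuple, and inner inverse coefficients are licensed only in $10^{-9}\mathbb Z\cap[B,B_0]$. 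So your fallback is unnecessary, and its positivity argument via the cone identities would in any case need more care.
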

\begin{proof}
The proof of Lemma~\ref{lem:diagonal:comparison} uses only the grid,
source-pair tests, and 85 cone identities of
Definition~\ref{def:finite:diagonal}; it does not use the choice of the
first accepted tuple. It therefore applies to $S$.
Since $B,\widehat\alpha_j,\widehat\beta_j\geq1$, Liao's coefficient $8$
and Lemma~\ref{lem:branch:initial} initialize all hypotheses at scale
eight on every chain power. Lemma~\ref{lem:finite:scaling}, with the
common factor $\vartheta$, gives scale one exactly as in
Lemma~\ref{lem:diagonal:refined}.
One final comparison gives the diagonal coefficient $c$ and cyclic
coefficients $\vartheta(\widehat\alpha_j,\widehat\beta_j)$.
Since $x,s\geq0$ and $\vartheta<1$, these imply
Eq.~\eqref{eq:aux:cyclic}. Further finite conditioning is allowed.
\end{proof}

Only the general inverse hypothesis remains to be closed.

\begin{lemma}[Refined general inverse bound]\label{lem:aux:general}
Suppose $\tau$ is a two-sided-admissible penalty on $G$.
Then $\mathsf G_{\tau^{[N]}}(a_1)$ holds for every $N\geq1$.
\end{lemma}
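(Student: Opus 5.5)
The proof will follow the same scaling template as Lemma~\ref{lem:diagonal:refined} and Lemma~\ref{lem:penalty:improved}. The one difference is that only the general inverse bound is iterated. The diagonal and cyclic bounds are now fixed inputs supplied by Lemma~\ref{lem:aux:cyclic}.

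For $\lambda\geq1$, let $\mathsf H(\lambda)$ denote the assertion that $\mathsf G_{(\tau/\lambda)^{[N]}}(a_0)$ holds for every $N\geq1$. First fix the accepted tuple $S$ selected in Definition~\ref{def:finite:auxiliary}. Lemma~\ref{lem:aux:cyclic}, applied to each $\tau/\lambda$ (still two-sided-admissible, and with $(\tau/\lambda)^{[N]}=\tau^{[N]}/\lambda$ by Lemma~\ref{lem:penalty:chain}), gives two things on every chain power:
\begin{itemize}
\item the diagonal bound $\mathsf D(c)$;
\item the $84$ cyclic bounds Eq.~\eqref{eq:aux:cyclic}.
\end{itemize}
These hold for every scale simultaneously, independently of $\lambda$, since Lemma~\ref{lem:aux:cyclic} applies to any two-sided-admissible penalty.

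Second, the key step is a one-step improvement: $\mathsf G_{\rho^{[m]}}(a_0)$ for all $m$ implies $\mathsf G_\rho(a_1)$, for $\rho$ of the form $\tau^{[N]}/\lambda$. I would prove this exactly as in Lemma~\ref{lem:outer:closing}, using the general outer copies of Definition~\ref{def:outer:copies} and the one additional general identity from Definition~\ref{def:finite:auxiliary}. Set $L:=M_\rho-\tauzero$. The rows available are the following.
\begin{itemize}
\item Entropy rows: Eq.~\eqref{eq:outer:shannon}.
\item Penalty rows: the rules of Lemma~\ref{lem:outer:rules}.
\item General inverse costs: Eq.~\eqref{eq:ensemble:pair}, fed by $\mathsf G(a_0)$ for pairs of mixed $X/Y$ type, matched via Lemma~\ref{lem:ensemble:flags}.
\item Diagonal and weighted-union costs: Lemma~\ref{lem:inverse:ensemble} with coefficient $c$.
\item Quotient costs: Lemma~\ref{lem:cyclic:substitution}, applied with the pairs $(\widehat\alpha_j,\widehat\beta_j)$, or their convex combinations via Lemma~\ref{lem:cyclic:convex}.
\end{itemize}
The exact cone identity expresses $a_1d-L$ as a nonnegative rational combination of these rows with cost mass one. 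Lemma~\ref{lem:finite:validity} makes each row nonnegative, so this gives $\mathsf G_\rho(a_1)$. The same argument holds on all chain powers and under finite conditioning.

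Third, I initialize and scale. Liao's Lemma~7 gives $\mathsf G_{\tau^{[N]}}(8)$. Since $a_0>1$, dividing by $8$ gives $\mathsf H(8)$ (indeed $8\leq 8a_0$). Now apply Lemma~\ref{lem:finite:scaling} with budgets $a_0\dist[X;Y]$ over all chain powers and all pairs of laws, with contraction factor $q=a_1/a_0<1$. Homogeneity comes from Lemma~\ref{lem:penalty:chain}. One detail needs checking: the fixed diagonal and cyclic inputs are valid at every scale $\lambda$, so the implication holds for each member of the class $\{\tau^{[N]}/\lambda\}$. This yields $\mathsf H(1)$, and one final application of the improvement step gives $\mathsf G_{\tau^{[N]}}(a_1)$.

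The main obstacle is the second step, namely the existence and validity of the single rational cone identity. All coefficients must cancel exactly, the general-inverse weight must be at most $a_1/a_0$ relative to the distance term, and each quotient source pair must lie above the convex envelope of $S$. This is a finite rational linear-programming certificate. I would defer it to Lemma~\ref{lem:finite:feasibility} and Definition~\ref{def:finite:auxiliary}, and only verify that each row type used is licensed by the lemmas listed above.
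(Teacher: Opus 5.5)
Your proposal is correct and follows the paper's proof essentially step for step. You fix $S^\dagger$ and take the diagonal coefficient $c$ and the 84 cyclic bounds from Lemma~\ref{lem:aux:cyclic} as scale-independent inputs; you prove the one-step improvement $\mathsf G(a_0)\Rightarrow\mathsf G(a_1)$ from the general-outer cone identity (three copies of each law); and you close with Lemma~\ref{lem:finite:scaling} at ratio $q=a_1/a_0$. Your initialization $\mathsf H(8)$ is just as valid as the paper's tighter $\mathsf H(8/a_0)$.

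Two small slips. First, the existence of $S^\dagger$ and its identity is Lemma~\ref{lem:finite:auxiliary}, not Lemma~\ref{lem:finite:feasibility}. Second, general inverse costs need no ensemble matching via Eq.~\eqref{eq:ensemble:pair}; that matching generally fails for mixed $X/Y$ flags. Instead, $\mathsf G(a_0)$ applies directly at each pair of posterior labels, exactly as in Lemma~\ref{lem:finite:validity}, which you ultimately invoke.
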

\begin{proof}
Let $S^\dagger$ be the tuple selected by
Definition~\ref{def:finite:auxiliary}; Lemma~\ref{lem:finite:auxiliary}
proves that this construction is defined.
Lemma~\ref{lem:aux:cyclic} supplies its cyclic bounds and the diagonal
coefficient $c$ for every two-sided-admissible penalty on all chain powers.

First let $\rho$ be two-sided-admissible and suppose
$\mathsf G_{\rho^{[N]}}(a_0)$ holds for every $N\geq1$.
Fix $N$, put $\sigma:=\rho^{[N]}$, and use three independent copies
of each input law $X,Y$. Set $L:=M_\sigma-\sigma(X)-\sigma(Y)$ and
$d:=\dist[X;Y]$. The selected cone identity has the form
\begin{equation}\label{eq:aux:identity}
a_1d-L=\sum_{r=1}^{m}w_rJ_r,\qquad
w_r>0,\qquad 1\leq m\leq512.
\end{equation}
Here $m$ is the number of rows in the selected identity.
General costs use $a_0$, diagonal ensemble costs use $c$, and quotient
costs use the pairs in $S^\dagger$.
Lemma~\ref{lem:finite:validity} makes every row nonnegative.
In particular, a weighted union of $X$ and $Y$ with equal weights gives
\begin{equation}\label{eq:aux:mixture}
\frac c4\bigl(\dist[X;X]+2\dist[X;Y]+\dist[Y;Y]\bigr)-L\geq0.
\end{equation}
Both self-distance terms and the full cross term are retained.
All conditional inverse costs use independent posterior labels and
replicas. Lemma~\ref{lem:penalty:chain} supplies the premises on every
further chain power of $\sigma$.
Thus Eq.~\eqref{eq:aux:identity} proves
$\mathsf G_{\rho^{[N]}}(a_1)$ for every $N$, including when $d=0$.

For $\lambda\geq1$, let $\mathsf H(\lambda)$ mean
$\mathsf G_{\tau^{[N]}}(\lambda a_0)$ for every $N\geq1$.
Liao~\cite[Lemma~7]{l24} gives $\mathsf H(8/a_0)$.
Apply the preceding comparison to $\rho=\tau/\lambda$ and use
Lemma~\ref{lem:penalty:chain} to obtain
\begin{equation}\label{eq:aux:scaling}
\mathsf H(\lambda)\Longrightarrow
\mathsf H(\max\{1,q\lambda\}).
\end{equation}
The source bounds apply to $\rho$ because they are universal over the
penalty class. Lemma~\ref{lem:finite:scaling} therefore gives
$\mathsf H(1)$. Equivalently, rational arithmetic gives the explicit bound
\[
\frac{8000}{4793}\left(\frac{4792457}{4793000}\right)^{5000}<1.
\]
One final application of the comparison proves the result.
\end{proof}

\subsection{Entropic consequence}

For two input laws, the average of their reference penalties permits the
same subspace to control both distances.

\begin{corollary}[Two-law entropic bound]\label{cor:entropic:general}
Suppose $X,Y$ are $G$-valued random variables.
Then there exists a subspace $V\leq G$ such that
\[
\dist[X;\cU_V]+\dist[Y;\cU_V]\leq\frac{7792457}{1000000}\dist[X;Y].
\]
\end{corollary}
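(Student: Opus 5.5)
The plan mirrors the proof of Corollary~\ref{cor:entropic}, using the averaged reference penalty in place of a single one. Fix the laws $X,Y$ of the statement and define, for laws $Z$ on $G$,
\[
\tau(Z):=\tfrac12\bigl(\dist[X;Z]+\dist[Y;Z]\bigr).
\]
By Lemma~\ref{lem:reference:admissible}, each of $\sigma_X$ and $\sigma_Y$ is two-sided-admissible. Two-sided admissibility is preserved under averaging, since both the conditioning and convolution inequalities are linear in $\tau$, continuity and translation invariance are preserved, and the criterion of Lemma~\ref{lem:penalty:core} (concavity of $\Phi_\pm$) is stable under convex combinations. Hence $\tau$ is two-sided-admissible, and Lemma~\ref{lem:aux:general} applies with $N=1$, giving $\mathsf G_\tau(a_1)$.

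Next I choose a subspace $V$ attaining $M_\tau=\min_{V\leq G}2\tau(\cU_V)$, so that $M_\tau=\dist[X;\cU_V]+\dist[Y;\cU_V]$. I then apply $\mathsf G_\tau(a_1)$ to the pair of laws $(X,Y)$ themselves:
\[
\dist[X;\cU_V]+\dist[Y;\cU_V]=M_\tau\leq\tau(X)+\tau(Y)+a_1\dist[X;Y].
\]

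The remaining step is to evaluate the penalty terms. We have
\[
\tau(X)=\tfrac12(\dist[X;X]+\dist[Y;X]),\qquad \tau(Y)=\tfrac12(\dist[X;Y]+\dist[Y;Y]).
\]
Their sum is $\dist[X;Y]+\tfrac12(\dist[X;X]+\dist[Y;Y])$, so the self-distances must be controlled. The naive route is the Ruzsa triangle inequality $\dist[X;X]\leq2\dist[X;Y]$. That gives the total penalty $\leq2\dist[X;Y]$ and hence the coefficient $a_1+2=6.792457$, which is better than the stated $7.792457$. This is suspicious, because the stated constant is exactly $a_1+3$, so the paper presumably uses something slightly different. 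One possibility is that the entropic Ruzsa triangle inequality is not among the stated tools. If I avoid it, I would use the crude bound $\dist[X;X]+\dist[Y;Y]\leq4\dist[X;Y]$ with some slack, or bound the penalty sum by $3\dist[X;Y]$ via a weaker estimate such as $\dist[X;X]\leq\dist[X;Y]+\dist[Y;X]+\dots$; either route lands at $a_1+3$ or better.

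Either way, the conclusion follows because $a_1+3=7792457/1000000$, and any sharper bound only strengthens it. The main point to check is that the triangle inequality $\dist[X;X]\leq2\dist[X;Y]$ holds in the entropic setting (Tao~\cite{t10}) and that the symmetrisation of $\tau$ really is admissible. Neither step is expected to be an obstacle; the substance is entirely in Lemma~\ref{lem:aux:general}.
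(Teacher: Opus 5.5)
The setup matches the paper's proof up to the final estimate. You use the averaged penalty $\tau(Z)=\tfrac12(\dist[X;Z]+\dist[Y;Z])$, two-sided admissible by averaging Lemma~\ref{lem:reference:admissible}. You then take $\mathsf G_\tau(a_1)$ from Lemma~\ref{lem:aux:general}, choose a minimizing $V$, and apply the general bound to the pair $(X,Y)$. With $d=\dist[X;Y]$ this gives $\dist[X;\cU_V]+\dist[Y;\cU_V]\leq(1+a_1)d+\tfrac12(\dist[X;X]+\dist[Y;Y])$.

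The final step has an arithmetic error, and the commentary built on it is mistaken. The triangle inequality gives $\dist[X;X]\leq2d$ and $\dist[Y;Y]\leq2d$. Hence $\tfrac12(\dist[X;X]+\dist[Y;Y])\leq2d$ and $\tau(X)+\tau(Y)\leq d+2d=3d$, not $2d$. This route therefore gives exactly the coefficient $a_1+3=7792457/1000000$. Nothing in your argument supports $a_1+2$, and nothing about the stated constant is suspicious.

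Your fallback bound $\dist[X;X]+\dist[Y;Y]\leq4d$ does not avoid the triangle inequality; it is the triangle inequality applied to $(X,Y,X)$ and $(Y,X,Y)$. So, as written, that route has no independent justification. The paper does exactly this. It proves the entropic Ruzsa triangle inequality $\dist[P;R]\leq\dist[P;Q]+\dist[Q;R]$ inline: subadditivity, the characteristic-two rewriting $(P+Q,Q+R)\mapsto(P+R,Q+R)$, the chain rule, and conditioning on $R$. It then applies this to those two triples and concludes with $(1+a_1)d+2d$.

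With the penalty bound corrected to $3d$ and the triangle inequality invoked explicitly, your proof coincides with the paper's.
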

\begin{proof}
Set $d:=\dist[X;Y]$ and
$\tau(Z):=\tfrac12(\dist[X;Z]+\dist[Y;Z])$.
Lemma~\ref{lem:reference:admissible} and averaging imply that $\tau$
is two-sided-admissible.
Lemma~\ref{lem:aux:general} gives $\mathsf G_\tau(a_1)$.
A subspace $V$ attaining $M_\tau$ therefore satisfies
\begin{equation}\label{eq:reference:averaged}
\dist[X;\cU_V]+\dist[Y;\cU_V]
\leq(1+a_1)d+\tfrac12(\dist[X;X]+\dist[Y;Y]).
\end{equation}

We use the entropic Ruzsa triangle inequality. For independent $P,Q,R$,
\begin{samepage}
We have
\begin{align*}
\HH[P+Q]+\HH[Q+R]
&\geq\HH[P+Q,Q+R]\\
&=\HH[P+R,Q+R]\\
&=\HH[P+R]+\HH[Q+R\mid P+R]\\
&\geq\HH[P+R]+\HH[Q+R\mid P+R,R]\\
&=\HH[P+R]+\HH[Q].
\end{align*}
The first step follows from subadditivity.
The second step follows by adding the two coordinates in characteristic two.
The third step follows from the chain rule.
The fourth step follows because further conditioning decreases entropy.
The last step follows from independence of $Q$ from $(P,R)$.
\par
\end{samepage}
Subtracting the marginal half-entropies gives
$\dist[P;R]\leq\dist[P;Q]+\dist[Q;R]$.
Applying this with input laws $(X,Y,X)$ and $(Y,X,Y)$ gives, respectively,
\begin{align}
\dist[X;X]&\leq2d,\label{eq:reference:diagonal:X}\\
\dist[Y;Y]&\leq2d.\label{eq:reference:diagonal:Y}
\end{align}
Adding Eqs.~\eqref{eq:reference:diagonal:X} and~\eqref{eq:reference:diagonal:Y}
yields
\begin{equation}\label{eq:reference:diagonal:sum}
\tfrac12(\dist[X;X]+\dist[Y;Y])\leq2d.
\end{equation}
\begin{samepage}
We have
\begin{align*}
\dist[X;\cU_V]+\dist[Y;\cU_V]
&\leq(1+a_1)d+\tfrac12(\dist[X;X]+\dist[Y;Y])\\
&\leq(3+a_1)d\\
&=\tfrac{7792457}{1000000}d.
\end{align*}
The first step follows from Eq.~\eqref{eq:reference:averaged}.
The second step follows from Eq.~\eqref{eq:reference:diagonal:sum}.
The last step follows from Eq.~\eqref{eq:aux:coefficients}.
\par
\end{samepage}
This is the asserted bound.
\end{proof}

%% file: 33_coefficients.tex
\section{Finite rational constructions}\label{app:coefficients}

This appendix specifies the finite coefficient constructions used in
the comparison lemmas. The feasibility assertions for these constructions
are verified using rational arithmetic.

\subsection{Formal coordinates and admissible rows}

Binary subspaces and flags give a finite coordinate system for each copy
diagram. Equalities from independence and symmetry are imposed before
testing a coefficient identity.

\begin{definition}[Formal copy coordinates]\label{def:finite:coordinates}
For an inner diagram use 32 fresh copies and, for unconditional
entropies, the additional coordinate $C$, as in
Definition~\ref{def:cyclic:copies}. For a diagonal outer diagram use
36 copies as in Definition~\ref{def:outer:diagonal}. For a general outer
diagram use eight copies of each input as in
Definition~\ref{def:outer:copies}. Unused coordinates are allowed.
Encode a binary vector $(v_0,\ldots,v_{n-1})$ by
$\sum_{i=0}^{n-1}2^iv_i$, and encode a subspace by its reduced row-echelon
basis, using the largest occupied bit as pivot and listing pivots in
decreasing order. A flag is an encoded pair $W\subset U$ with
$\dim(U/W)=1$.

Take one formal entropy coordinate for each subspace and one formal
penalty coordinate for each flag. Use $\mathscr E,\mathscr U,\mathscr R$
in the inner setting and $\mathcal E,\mathcal R$ in the outer setting.
Adjoin $x,s,L$ in the inner setting and $d,L$ in the outer setting.
Quotient this rational vector space by the following equalities:
\begin{enumerate}
\item Empty entropy coordinates vanish. Conditional-copy and outer
entropy coordinates vanish on one input coordinate and add on disjoint
supports. Row operations and copy permutations identify entropy
coordinates. General outer permutations preserve the two input classes
or interchange them globally.
\item In the inner setting, $\mathscr U(C)=s-x$,
$\mathscr U(Z_i)=\mathscr U(C+Z_i)=s/2$, and
$\mathscr U(C,Z_W)=s-x+\mathscr E(W)$.
Unconditional entropy coordinates are also identified under simultaneous
reflection $Z_i\mapsto Z_i+C$; their permutations fix $C$.
\item Penalty coordinates are identified when their flags agree after
copy permutations and removal of independent irrelevant conditioning.
An output may change by a vector in its conditioning space.
A single unconditioned input has penalty coordinate zero.
In the general outer setting, global interchange is allowed for an
averaged coordinate, but an ensemble matching within one orientation
must preserve both input classes separately.
\item In the outer setting, $d$ is the entropy coordinate of
$\operatorname{span}(Z_1+Z_2)$ in the diagonal case and of
$\operatorname{span}(X_1+Y_1)$ in the general case.
\end{enumerate}
Independent components are the connected components of the supports
of the reduced basis rows. To remove irrelevant conditioning, reduce
the output modulo $W$, retain the components of $W$ meeting its support,
and discard the others. Impose all these equalities for every permitted
subspace, flag, and permutation. Rational Gaussian elimination determines
the resulting quotient.
\end{definition}

In the next definition, $E,R$ denote the conditional-copy or outer
functionals, and $h_f:=E(U_f)-E(W_f)$. Inner unconditional entropy
rows use $\mathscr U$ explicitly.

\begin{definition}[Finite row families]\label{def:finite:rows}
Fix a positive rational scale $\epsilon$, licensed cyclic pairs $(a,b)$,
and licensed inverse coefficients. All subspaces and flags below range
over the corresponding diagram in Definition~\ref{def:finite:coordinates}.
\begin{enumerate}
\item Entropy rows are $E(P)$ and
$F(P)+F(Q)-F(P\cap Q)-F(P+Q)$, where $F=E$ or, in the inner setting,
$F=\mathscr U$. Also allow
$\sum_{i=1}^k\mathscr U(P_i)-\mathscr U(\sum_{i=1}^k P_i)$ when
$1\leq k\leq33$ and $\sum_{i=1}^k \dim(P_i)=\dim(\sum_{i=1}^k P_i)$.
The inner rows additionally include $x,s$ and
$\mathscr U(P)-\mathscr E(P^\circ)$ when $C\notin P$.
Here $P^\circ$ is the projection deleting $C$.
\item If $f$ is obtained by further conditioning $g$, allow
$R_g-R_f$ and $R_f-R_g+2(h_g-h_f)$.
If $f=g*h$ on disjoint supports, allow
$R_g+2(h_f-h_g)-R_f$ and $R_f-R_g$.
For every output representative $v\in U_f\setminus W_f$, allow
$2E(\operatorname{span}(v))-R_f$.
\item For disjoint flags $f=(W\subset U)$ and $g=(W'\subset U')$,
choose output representatives $v,v'$ and define
\begin{align}
D(f,g)&:=E(W+W'+\operatorname{span}(v+v'))
 -\tfrac12[E(U)+E(U')+E(W)+E(W')],
\label{eq:finite:distance}\\
Q_k(f,g)&:=kD(f,g)+\tfrac12(R_f+R_g-h_f-h_g).
\label{eq:finite:cost}
\end{align}
Allow $\epsilon Q_k(f,g)-L$ for a licensed general inverse coefficient,
or for a licensed diagonal coefficient when the posterior ensembles
match by a copy permutation after irrelevant conditioning is removed.
For a diagonal coefficient also allow
\begin{equation}\label{eq:finite:mixture}
\epsilon[p^2Q_k(f,f')+2p(1-p)Q_k(f,g')+(1-p)^2Q_k(g,g')]-L,
\end{equation}
where $p=r/t$ and $1\leq r<t\leq1000$.
Primes denote independent replicas. Each component must fit in the
diagram, and the self families must match the corresponding cross
families within each orientation.
\item For every cyclic quotient in
Definition~\ref{def:inner:quotient} or~\ref{def:outer:quotient}, with
selected posterior $f$ and quantities $x_f,s_f$, allow
\begin{equation}\label{eq:finite:branch}
ax_f+bs_f+\epsilon(R_f-h_f)-L
\end{equation}
for each licensed pair $(a,b)$.
\end{enumerate}
Also allow a common-function entropy row plus the block-subadditivity
rows for its two positive terms as one compound row. Thus a reflection
row applies the common-function inequality to $P,Q$, decomposes the
positive entropy terms into blocks, and normalizes the result.
Order rows by type, then by their integer encodings, and then by rational
parameters in increasing order. Duplicate vectors may be retained.
\end{definition}

All these row rules have a common probabilistic interpretation.

\begin{lemma}[Validity of the generated rows]\label{lem:finite:validity}
Suppose the inverse and cyclic hypotheses licensing
Definition~\ref{def:finite:rows} hold on every chain power of a
two-sided-admissible penalty. Substitute
$L=\epsilon(M_\tau-2\tau(A\mid C))$ in an inner diagram and
$L=\epsilon(M_\tau-\tauzero)$ in an outer diagram.
Then every generated row is nonnegative.
\end{lemma}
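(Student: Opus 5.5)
The plan is to verify nonnegativity row family by row family, using the earlier lemmas as certificates. Every row is a finite linear combination of formal coordinates. Each formal coordinate is realized by an actual entropy or penalty quantity: $\mathscr E,\mathscr U,\mathscr R$ in the inner diagram (Definition~\ref{def:cyclic:copies}) and $\mathcal E,\mathcal R$ in the outer diagrams (Definitions~\ref{def:outer:copies}, \ref{def:outer:diagonal}, \ref{def:outer:state}). I will first check that each quotient relation imposed in Definition~\ref{def:finite:coordinates} holds for these realizations, so that a row value is well defined. Vanishing on one copy, additivity on disjoint supports, row-operation and permutation invariance, the reflection $Z_i\mapsto Z_i+C$, and the identities $\mathscr U(C)=s-x$, $\mathscr U(Z_i)=s/2$, $\mathscr U(C,Z_W)=s-x+\mathscr E(W)$ all come from Lemma~\ref{lem:copy:normalization} and Lemma~\ref{lem:outer:rules}. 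Penalty identifications under permutation and irrelevant-conditioning removal come from Lemma~\ref{lem:ensemble:flags} and the fourth rules of Lemmas~\ref{lem:copy:rules} and~\ref{lem:outer:rules}. The identification of $d$ follows because $\mathcal E(\operatorname{span}(Z_1+Z_2))=\HH[Z_1+Z_2]-t$, which is $\dist[X;X]$, or in the general symmetrized case $\dist[X;Y]$.

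Next come the entropy rows. The row $E(P)\geq0$ holds because an independent sum does not lose entropy: $\HH[Z_P]\geq\dim(P)t$, by chaining conditional entropies of independent copies. The submodularity rows are Eqs.~\eqref{eq:copy:shannon}, \eqref{eq:copy:conditional:shannon}, and \eqref{eq:outer:shannon}. The block-subadditivity rows follow from plain subadditivity once the dimension condition cancels the rank terms. The rows $x,s\geq0$ and $\mathscr U(P)-\mathscr E(P^\circ)\geq0$ are Eq.~\eqref{eq:copy:entropy:conditioning}. Compound rows are sums of valid rows. The penalty rows in item~2 are exactly assertions 2, 3, 5 and 6 of Lemma~\ref{lem:copy:rules}, together with the corresponding outer rules 2, 3, 5, 6 and 7. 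In each case I use $h_f=\HH[f]-t$ from Eq.~\eqref{eq:copy:state:entropy}. The inner analogue of the seventh outer rule, $R_f-R_g\geq0$ under convolution, follows from the lower bound in Eq.~\eqref{eq:core:convolution}.

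The cyclic rows in Eq.~\eqref{eq:finite:branch} follow from Lemma~\ref{lem:cyclic:substitution}, applied with the licensed pair $(a,b)$. Here Lemma~\ref{lem:cyclic:convex} supplies the hypothesis for any licensed pair, because such a pair lies above a convex combination of the hypothesized pairs. Note that $\mathscr R_f-\mathfrak h_f$ equals $R_f-h_f$ in the inner case and $\pi_i$ in the outer case. The cyclic bound is homogeneous, so I will scale: multiplying by $\epsilon$ and comparing with $L$ requires the row to be read as $\epsilon$ times the substituted inequality. I will check that the scaled version is the intended normalization, presumably with the licensed pair already carrying the factor $\epsilon$.

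The inverse rows are the main obstacle. I must show that $Q_k(f,g)$ equals $\tfrac12(\tau(f)+\tau(g))$-type penalty terms plus $k$ times the actual averaged Ruzsa distance between the posteriors, shifted by the reference penalty. Specifically, I need $D(f,g)$ to equal $\dist[f;g]$ with independent conditioning values. This holds since $W+W'+\operatorname{span}(v+v')$ encodes $Z_v+Z_{v'}$ given $(Z_W,Z_{W'})$, and disjoint supports give independence. Then $\tfrac12(R_f+R_g-h_f-h_g)=\tau(f)+\tau(g)-\tauzero$ (or $-2\tau(A\mid C)$ inner), and $\mathsf G(k)$ applied pointwise and averaged gives $Q_k-(M_\tau-\text{ref})\geq0$. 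In the diagonal-coefficient case I will use Lemma~\ref{lem:inverse:ensemble}, Eq.~\eqref{eq:ensemble:pair}, when the ensembles match. This requires the chain-power hypothesis, which I am given. For the mixture row Eq.~\eqref{eq:finite:mixture}, I will apply Eq.~\eqref{eq:ensemble:inverse} to the weighted union with weight $p$. Its $T$ term then splits into $p\tau(f)+(1-p)\tau(g)$, its distance term splits into $p^2,2p(1-p),(1-p)^2$ pieces, and the replicas $f'$, $g'$ realize the self-distances. Checking these normalizations carefully, including the orientation average in the general outer setting, is where I expect most of the care to go. Multiplying each inequality by $\epsilon$ then gives the rows.
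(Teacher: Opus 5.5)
Your proposal is correct and follows the paper's proof essentially step by step. The formal equalities come from the normalization and flag-matching lemmas, the entropy rows from common-function submodularity and independence, and the penalty rows from the state rules. The inverse rows come from identifying $D(f,g)$ with the independently conditioned $\dist[f;g]$, and the quotient rows from Lemma~\ref{lem:cyclic:substitution}. Your $\epsilon$ hedge resolves as you guessed: the paper applies that lemma with coefficients $a/\epsilon,b/\epsilon$. For $E(P)\geq0$ in the general outer diagram, the bound $\dim(P)t$ holds only after averaging over the two orientations, which the paper makes explicit by conditioning on the non-pivot inputs.
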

\begin{proof}
The formal equalities follow from Lemmas~\ref{lem:copy:normalization},
\ref{lem:copy:rules}, and~\ref{lem:outer:rules}.
Lemma~\ref{lem:ensemble:flags} gives the flag identifications.
The common-function rows follow from
Lemma~\ref{lem:entropy:common:function}; the rank terms cancel.
Block subadditivity has the same rank cancellation.
Lemma~\ref{lem:copy:normalization} gives the remaining inner entropy rows.

For $E(P)\geq0$, choose pivot coordinates for a basis of $P$ and
condition on all other inputs. The pivot variables are independent,
so the conditional tuple entropy is the sum of their entropies.
Removing this conditioning can only increase entropy. Each pivot has
averaged entropy $t$, also in the two orientations of the general case.
Subtracting $\dim(P)t$ proves the assertion.

The penalty rows follow from Lemmas~\ref{lem:copy:rules}
and~\ref{lem:outer:rules}. For reverse convolution, apply reverse
conditioning to the sum and then condition on the second summand.
The chain rule identifies $D(f,g)$ with the independently conditioned
Ruzsa distance. Set $T_0:=2\tau(A\mid C)$ for an inner diagram and
$T_0:=\tauzero$ for an outer diagram. The state normalizations give
\begin{equation}\label{eq:finite:cost:interpretation}
Q_k(f,g)=k\dist_{\mathrm{sym}}[f;g]
+\tau_{\mathrm{sym}}(f)+\tau_{\mathrm{sym}}(g)-T_0.
\end{equation}
A licensed general inverse bound applies at each pair of posterior
labels and then averages. For matching ensembles,
Lemma~\ref{lem:inverse:ensemble} gives the same bound with a licensed
diagonal coefficient. For a weighted union, its two independent labels
give weights $p^2,2p(1-p),(1-p)^2$.
The same lemma therefore bounds the complete mixture cost.
Multiplication by $\epsilon$ and subtraction of $L$ prove all inverse rows.
In the inner case these applications are first made at fixed $C$ and
then averaged. Finally Lemma~\ref{lem:cyclic:substitution}, with
coefficients $a/\epsilon,b/\epsilon$, proves the quotient rows.
The arguments remain valid after further conditioning and averaging.
\end{proof}

\subsection{Coefficient selection by rational linear systems}

For fixed coefficients, the remaining unknowns are nonnegative row
weights. The following rule specifies the finite feasibility test.

\begin{definition}[Exact cone test]\label{def:finite:cone}
For a finite row family $\mathcal R$ and target vector $T$, enumerate
subsets of at most $512$ rows, first by cardinality and then
lexicographically. Form the column matrix $A$ of each subset in the
quotient of Definition~\ref{def:finite:coordinates}.
Discard linearly dependent columns. Solve $Aw=T$ by rational Gaussian
elimination and accept if a solution exists and $w\geq0$.
The test succeeds if some subset is accepted.
The first accepted subset and its unique solution specify the chosen
identity; delete zero weights. The coefficient of $L$ ensures that
the weights on inverse and quotient costs sum to one.
\end{definition}

Only a bounded grid of coefficient tuples is needed. The source rows
are generated from the convex hull of the same tuple being tested.

\begin{definition}[Diagonal coefficient construction]\label{def:finite:diagonal}
Use $B,c,B_0,\vartheta$ from Definition~\ref{def:diagonal:constants}.
Enumerate tuples $S=((\alpha_j,\beta_j))_{j=0}^{83}$ satisfying
\begin{align}
&10^{10}\alpha_j,10^{10}\beta_j\in\mathbb Z,\quad
1\leq\alpha_j\leq6,\quad1\leq\beta_j\leq250,
\label{eq:finite:diagonal:grid}\\
&\alpha_0<\cdots<\alpha_{83},\qquad
\alpha_{67}\leq4,\qquad\beta_{67}\leq18/5.
\label{eq:finite:diagonal:order}
\end{align}
Order tuples lexicographically by
$(\alpha_0,\beta_0,\ldots,\alpha_{83},\beta_{83})$.
License a source pair if
\begin{equation}\label{eq:finite:source:grid}
(a,b)\in10^{-10}\mathbb Z^2\cap[1,250]^2,\qquad
\sum_{j=0}^{83}\lambda_j=1,\quad
\sum_{j=0}^{83}\lambda_j\alpha_j\leq a,\quad
\sum_{j=0}^{83}\lambda_j\beta_j\leq b
\end{equation}
for some $\lambda_j\geq0$. Test the latter rational linear system by
enumerating its basic solutions.
Only diagonal inverse coefficients are licensed: in an inner diagram
they range over $10^{-9}\mathbb Z\cap[B,B_0]$, and in the outer diagram
the coefficient is $B$. Set $\epsilon=1$.

Accept $S$ if the cone test succeeds for every target
\begin{align}
T_j&:=\vartheta\alpha_jx+\vartheta\beta_js-L
\quad(0\leq j\leq83),\label{eq:finite:diagonal:inner}\\
T_{\mathrm d}&:=cd-L.\label{eq:finite:diagonal:outer}
\end{align}
The first accepted tuple defines the coefficients in
Definition~\ref{def:diagonal:branches}; its cone tests define the 85 identities.
\end{definition}

For example, an unconditioned input has $R_f=h_f=0$.
Two such outer states give $D(f,g)=d$, so their direct row is $Bd-L$.
A mixture with $p=1/10$ has coefficients $1/100,9/50,81/100$.
These examples fix the cost normalization.

The separate two-law comparison uses the same construction with a
smaller coefficient tuple and scale $\eta$.

\begin{definition}[Two-law coefficient construction]\label{def:finite:general}
Use $\eta,a,b,a',b',\theta$ from Definition~\ref{def:copy:coefficients}.
Enumerate tuples $((u_j,v_j))_{j=0}^{10}$ with
\begin{equation}\label{eq:finite:general:grid}
10^{12}u_j,10^{12}v_j\in\mathbb Z,\qquad
2/5\leq u_j,v_j\leq13/10
\end{equation}
in lexicographic order. Set $q_j:=\theta$ for every $j$.
License these eleven cyclic pairs, the general inverse coefficient $a$,
and the diagonal inverse coefficient $b$. Set $\epsilon=\eta$.
Accept the tuple if the cone test succeeds for all thirteen targets
\begin{align}
T_j&:=\theta(u_jx+v_js)-L\quad(0\leq j\leq10),
\label{eq:finite:general:inner}\\
T_{\mathrm g}&:=\eta a'd-L
\quad\text{in the general outer diagram},
\label{eq:finite:general:outer}\\
T_{\mathrm d}&:=\eta b'd-L
\quad\text{in the diagonal outer diagram}.
\label{eq:finite:general:diagonal}
\end{align}
The first accepted tuple defines the coefficients in
Definition~\ref{def:branch:hypotheses}; its cone tests define the thirteen identities.
\end{definition}

All grids and row families are finite.
The following is the finite computational assertion used by the proof.

\begin{lemma}[Rational feasibility]\label{lem:finite:feasibility}
The families in Definitions~\ref{def:finite:diagonal}
and~\ref{def:finite:general} are nonempty.
Their selected coefficient tuples and identities are therefore defined.
\end{lemma}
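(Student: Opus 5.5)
The plan is to prove Lemma~\ref{lem:finite:feasibility} by exhibiting, for each of the two constructions, one explicit coefficient tuple together with explicit certificates. Nonemptiness of a family needs only one tuple passing every test. The ordering by first accepted tuple is irrelevant here: once some tuple is accepted, the lexicographically first accepted tuple exists, because the grids in Eq.~\eqref{eq:finite:diagonal:grid} and Eq.~\eqref{eq:finite:general:grid} are finite. Likewise, once some subset passes the cone test, the first accepted subset exists, because there are finitely many subsets of at most $512$ rows. So the existence assertions reduce to producing witnesses, and the selection rules then define the coefficients and identities.

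For Definition~\ref{def:finite:diagonal}, I would first fix a candidate tuple $S$ on the $10^{-10}$ grid. It must be strictly increasing in $\alpha_j$ and satisfy $\alpha_{67}\le4$ and $\beta_{67}\le18/5$. I would obtain it by solving the linear programs heuristically in floating point and then rounding. Rounding should be outward, increasing the $\beta_j$ slightly, so that the contraction factor $\vartheta=1-10^{-13}$ leaves slack.

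For each of the $85$ targets $T_j$ and $T_{\mathrm d}$, I then list the certificate. This consists of a set of at most $512$ rows from Definition~\ref{def:finite:rows} and rational nonnegative weights $w$. Each quotient row needs a licensed source pair $(a,b)$, so each such row comes with convex weights $\lambda$ witnessing Eq.~\eqref{eq:finite:source:grid}. Each inverse row carries a coefficient in $10^{-9}\mathbb Z\cap[B,B_0]$, or exactly $B$ in the outer diagram.

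Verification is then purely mechanical and exact. First, reduce every row vector modulo the equalities of Definition~\ref{def:finite:coordinates} by rational Gaussian elimination. Second, check that the chosen columns are independent. Third, check that $Aw=T$ holds exactly in the quotient, which shows that all auxiliary entropy and penalty coordinates cancel. Fourth, check $w\ge0$.

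The cone test is defined as the first accepted subset, not as our subset. It therefore suffices to note that our subset is accepted. Any earlier subset that is accepted only changes which identity gets selected, not whether one exists.

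The same scheme handles Definition~\ref{def:finite:general}. There I would exhibit eleven pairs in $[2/5,13/10]$ on the $10^{-12}$ grid, and thirteen certificates, using $\epsilon=\eta$ and the licensed coefficients $a,b$.

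The main obstacle is the size of the certificate search, not any mathematical subtlety. The quotient spaces over $32$ or $36$ copies are enormous, so the coordinate quotient cannot be formed globally. I would restrict each certificate to the finitely many subspaces and flags actually appearing in its rows. I would then check that the imposed identifications among these coordinates are consequences of the listed equalities, which are closed under restriction to the involved coordinates. This makes exact verification of each identity tractable.

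I expect the delicate point to be the diagonal family. The $84$ cyclic targets are coupled through the source-pair licensing, which refers back to the convex hull of the same tuple. For this reason I would first find a self-consistent tuple by iterating the LP: shrink all pairs by $\vartheta$, recompute the lower convex envelope, and repeat until it is stable. Only after that would I round and certify.
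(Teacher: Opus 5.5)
Your route is the paper's. The paper's proof also rests on an exactly verified witness: one tuple on each printed grid, with rational identities whose inverse and quotient costs use only licensed coefficients. It then notes that finiteness of the grids and of the subset enumeration makes the first accepted tuple and identity well defined. The paper certifies slightly stronger outputs ($\alpha_j^*\le\vartheta\alpha_j$, $\beta_j^*\le\vartheta\beta_j$, $\gamma<c$, and the analogous two-law bounds) and then adds nonnegative $x$, $s$, $d$ rows to reach the targets exactly. Your direct check of $Aw=T$ absorbs this step, since those rows belong to the family.

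The step you should change is the independence check. Acceptance in Definition~\ref{def:finite:cone} concerns linear independence in the full quotient of Definition~\ref{def:finite:coordinates}. Restricting to the coordinates that occur in a certificate is sound for verifying $Aw=T$, because every identification you use is an instance of a listed equality. It does not certify independence. The global equalities (additivity over components, permutations, reflection, removal of irrelevant conditioning) can create relations among the occurring coordinates through coordinates that do not occur. Your claim that the equalities are ``closed under restriction'' is not justified.

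The paper never checks independence. Suppose a nonnegative representation $Aw=T$ with at most $512$ rows has dependent columns. Take a nonzero dependence $\lambda$, oriented so that some $\lambda_r>0$. Replace $w$ by $w-t\lambda$, with $t$ maximal subject to $w-t\lambda\ge0$. This preserves the target, keeps the weights nonnegative, and removes at least one row. The support never becomes empty, because $T$ has coefficient $-1$ on $L$. Iterating gives an independent subset of at most $512$ rows with a nonnegative, necessarily unique, solution. The cone test accepts this subset whatever the global quotient looks like. With this Carath\'eodory pruning in place of your second verification step, your plan proves the lemma.
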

\begin{proof}
Exact rational feasibility checks give comparison tuples in both printed
coefficient grids. For the diagonal tuple, all $457$ source pairs pass
the convex feasibility test, the inverse caps lie on the prescribed
$10^{-9}$ grid, and every mixture denominator is at most $1000$.
Expansion and normalization give inner outputs $(\alpha_j^*,\beta_j^*)$
and an outer output $\gamma$ satisfying
\begin{align}
&0<\alpha_j^*\leq\vartheta\alpha_j,\qquad
0<\beta_j^*\leq\vartheta\beta_j,
\label{eq:finite:verified:inner}\\
&\frac{4286816984787}{10^{12}}\leq\gamma
<\frac{4286816984788}{10^{12}}<c.
\label{eq:finite:verified:outer}
\end{align}
Add $(\vartheta\alpha_j-\alpha_j^*)x$ and
$(\vartheta\beta_j-\beta_j^*)s$ to each inner identity and
$(c-\gamma)d$ to the outer identity.
These nonnegative entropy or information rows give exactly the targets
of Definition~\ref{def:finite:diagonal}.
The largest resulting support has $454$ rows.

For the two-law tuple, the inner output ratios are at most
$499999779583/500000000000<\theta$, the outer coefficients are
strictly below $\eta a'$ and $\eta b'$, and the mixture denominators are
at most $20$. Adding the corresponding nonnegative $x,s,d$ rows gives
the targets of Definition~\ref{def:finite:general}; the largest support
has $222$ rows. Compound entropy rows are expanded by
Definition~\ref{def:finite:rows} before checking their coefficients.

For a dependent feasible support, take a nonzero rational dependence
among its columns. Subtract a multiple from the nonnegative weights
until one weight is zero and none is negative. Repeat until the
columns are independent. This preserves the target and never increases
support size, so all these identities pass the $512$-row cone test.

The feasibility assertions can be reproduced by the finite enumeration
of the printed grids and cone tests. This enumeration has no asserted
efficiency bound. The first accepted tuples are defined by the selection
rules; they need not coincide with the comparison tuples used to
establish nonemptiness.
\end{proof}

\subsection{A further two-law comparison}

The main diagonal construction can also supply the source pairs for
a general comparison using only three copies of each input law.

\begin{definition}[Refined two-law construction]\label{def:finite:auxiliary}
Use $a_0,a_1$ from Eq.~\eqref{eq:aux:coefficients} and $c$ from
Definition~\ref{def:diagonal:constants}.
Enumerate the tuples accepted by Definition~\ref{def:finite:diagonal}
in the same order. For each tuple $S$, form the general outer rows of
Definition~\ref{def:finite:rows}, restricted to three copies of each
input law, with $\epsilon=1$. License the general inverse coefficient
$a_0$, the diagonal inverse coefficient $c$, and the 84 cyclic pairs
in $S$. Apply the cone test of Definition~\ref{def:finite:cone} to
$a_1d-L$. The first tuple for which this additional test succeeds is
$S^\dagger$, and the cone test selects its general identity.
This tuple need not be the first tuple in
Definition~\ref{def:finite:diagonal}.
\end{definition}

A single rational witness proves nonemptiness of this additional test.

\begin{lemma}[Feasibility of the refined comparison]\label{lem:finite:auxiliary}
The construction in Definition~\ref{def:finite:auxiliary} is defined.
\end{lemma}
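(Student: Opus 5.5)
To show that the construction in Definition~\ref{def:finite:auxiliary} is defined, it suffices to exhibit one tuple $S$ accepted by Definition~\ref{def:finite:diagonal} for which the additional cone test on the target $a_1d-L$ succeeds. Such a tuple exists because the set of accepted tuples is nonempty by Lemma~\ref{lem:finite:feasibility}, and the enumeration is finite and ordered, so the first success is well defined once a single success is shown. I would therefore fix one explicit rational witness: a tuple $S^\ast$ that passes all 85 tests of Definition~\ref{def:finite:diagonal}, together with a finite list of rows from the three-copy general outer diagram with rational weights $w_r>0$.

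The steps are as follows. First I would record $S^\ast$ and certify that it is accepted. This is done exactly as in the proof of Lemma~\ref{lem:finite:feasibility}: check the source-pair convex feasibility, the inverse caps on the $10^{-9}$ grid, and the 85 cone identities. The 85 identities can be reused if $S^\ast$ is the comparison tuple used there. Second, in the quotient space of Definition~\ref{def:finite:coordinates} for three copies of each of $X$ and $Y$, I would list the selected rows. These are general cost rows with coefficient $a_0$, diagonal cost rows with coefficient $c$ (including the equal-weight mixture row Eq.~\eqref{eq:aux:mixture}), quotient rows using pairs licensed by the convex hull of $S^\ast$, and entropy and penalty rows. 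I would then verify by exact rational arithmetic that $\sum_r w_rJ_r=\gamma d-L$ with $\gamma<a_1$. Third, I would add the nonnegative row $(a_1-\gamma)d$, which is the entropy row $E(\operatorname{span}(X_1+Y_1))$, to hit the target exactly.

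Fourth, I would invoke the support-reduction argument from Lemma~\ref{lem:finite:feasibility}: subtract rational dependences until the columns are independent. This never increases support size, so the identity has at most $512$ independent rows with nonnegative weights and is accepted by the cone test of Definition~\ref{def:finite:cone}. This gives the bound $m\le512$ in Eq.~\eqref{eq:aux:identity}.

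The main obstacle is the existence and reproducibility of the rational witness itself. This is a computational certificate, not a conceptual argument: one must solve a linear program over the three-copy row family and round it to an exact rational feasible point with margin $a_1-\gamma>0$. I would first solve in floating point, pick a basic support, and then re-solve that support exactly over $\mathbb{Q}$, enlarging the support if nonnegativity fails. The remaining checks are routine finite verification.
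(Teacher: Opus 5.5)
Your plan is the paper's proof. The paper takes the diagonal comparison tuple from Lemma~\ref{lem:finite:feasibility} and exhibits an exact rational identity $\gamma d-L=\sum_{r=1}^{69}w_rJ_r$ on the three-copy general diagram, with $\gamma\approx 4.7924560144<a_1$. It then adds the slack row $(a_1-\gamma)d$ and uses dependence elimination to pass the 512-row cone test. The only difference is that the paper records the actual certificate (row types, bounds on $\gamma$, an external verifier), whereas you describe how one would search for it.

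One correction: Definition~\ref{def:finite:auxiliary} licenses only the 84 pairs of $S$ themselves, not pairs dominated by their convex hull, so your quotient rows should use those entries directly, as the paper's fifteen source pairs do. This is harmless, since a dominated pair's row equals a convex combination of licensed quotient rows plus nonnegative multiples of $x_i$ and $s_i=x_j+x_k$ from Definition~\ref{def:outer:quotient}, which are (sums of) submodularity rows.
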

\begin{proof}
Use the diagonal comparison tuple from the rational verification in
Lemma~\ref{lem:finite:feasibility}.
For this same tuple, exact row expansion gives an identity
\begin{equation}\label{eq:aux:witness}
\gamma d-L=\sum_{r=1}^{69}w_rJ_r,\qquad w_r>0,
\end{equation}
where
\begin{equation}\label{eq:aux:gamma}
\frac{4792456014399348}{10^{15}}<\gamma
<\frac{4792456014399350}{10^{15}}<a_1<a_0.
\end{equation}
The rows consist of ten common-function entropy rows, ten forward
conditioning rows, fourteen reverse conditioning rows, eleven upper
convolution rows, six general inverse costs, one complete diagonal
ensemble mixture, and seventeen cyclic quotient costs.
The weights on the 24 inverse and quotient costs sum to one.
All auxiliary entropy and penalty coefficients cancel exactly.

The witness encodes its six positions in the order
$(X_1,X_2,X_3,Y_1,Y_2,Y_3)$; these embed in the general outer diagram.
Every conditioning inclusion and disjoint-support requirement is
checked on the literal binary spaces. The entropy and penalty
normalizations use the equalities in
Definition~\ref{def:finite:coordinates}, including global interchange
only for orientation-averaged quantities.
The fifteen cyclic source pairs are entries of the same 84-pair
comparison tuple used by the diagonal verification.
Their literal rational values, all 69 rows and weights, and an exact
verifier are supplied in \texttt{auxiliary\_4\_792457.zip}.
The package also records the full source tuple and the checksum of
its diagonal proof archive.

Add the nonnegative entropy row $(a_1-\gamma)d$ to
Eq.~\eqref{eq:aux:witness}. This gives the target $a_1d-L$ with at
most 70 rows. The dependence elimination in the proof of
Lemma~\ref{lem:finite:feasibility} preserves this target and does not
increase the support. Thus the 512-row cone test accepts this tuple,
proving nonemptiness. The selected tuple and identity may differ from
this witness.
\end{proof}